\newtheorem{thm}{Theorem}[section]
\newtheorem{thmx}{Theorem} 
\newtheorem{prop}[thm]{Proposition}
\newtheorem{lemma}[thm]{Lemma}
\theoremstyle{definition}
\newtheorem{defn}[thm]{Definition}
\newtheorem{ex}[thm]{Example}
\theoremstyle{remark}
\newtheorem{remark}[thm]{Remark}
\numberwithin{equation}{section}
\def\EE{\mathcal{E}^{1,2}}
\def\SS{{{S}}^{2,2}}
\def\A{\mathrm{O}^{\uparrow}_{+}(2,3)}
\def\NN{\mathcal{N}_f}
\def\aa{\mathfrak{o}(2,3)}
\def\MM{\mathbb{R}^{2,3}}
\def\df{f^{\sharp}}
\def\R{\mathbb{R}}
\def\Z{\mathbb{Z}}
\def\SL{\mathrm{SL}}
\def\T{{^t\!}}
\def\ker{\mathrm{Ker\,}}
\def\ima{\mathrm{Im\,}}
\def\span{\mathrm{span\,}}
\def\tr{\mathrm{tr}}
\def\det{\mathrm{det}}
\DeclareMathOperator{\sgn}{sgn}
\begin{document}
\title[]{Conformal geometry of \\ quasi-umbilical timelike surfaces}

\author{Emilio Musso}
\address{(E. Musso) Dipartimento di Scienze Matematiche, Politecnico di Torino,
Corso Duca degli Abruz\-zi 24, I-10129 Torino, Italy}
\email{emilio.musso@polito.it}

\author{Lorenzo Nicolodi}
\address{(L. Nicolodi) Di\-par\-ti\-men\-to di Scienze Matematiche, Fisiche e Informatiche,
Uni\-ver\-si\-t\`a degli Studi di Parma, Parco Area delle Scienze 53/A,
I-43124 Parma, Italy}
\email{lorenzo.nicolodi@unipr.it}

\author{Mason Pember}
\address{(M. Pember) 
Department of Mathematics, University of Bath,
Claverton Down, Bath BA2 7AY, United Kingdom}
\email{mason.j.w.pember@bath.edu}

\thanks{Authors partially supported by PRIN 2017 and 2022 ``Real and Complex 
Manifolds: Topology, Geometry and Holomorphic Dynamics" (prott. 2017JZ2SW5-004 and 
2022AP8HZ9-003); and by the GNSAGA of INdAM. The present research was also partially 
supported by MIUR grant ``Dipartimenti di Eccellenza" 2018-2022, CUP: E11G18000350001, 
DISMA, Politecnico di Torino.}

\subjclass[2000]{53C50, 53A30, 53B30}

\date{Version of october 8, 2024}


\keywords{Conformal Lorentz geometry, timelike surfaces, quasi-umbilical surfaces, 
isothermic surfaces, harmonic maps.}

\begin{abstract} 
The paper focuses on the conformal Lorentz geometry of quasi-umbilical timelike surfaces in the $(1+2)$-Einstein 
universe, the conformal compactification of Minkowski 3-space 
realized as the space of oriented null lines through the origin of $\R^{2,3}$.
A timelike immersion of a surface $X$ in the Einstein universe is {quasi-umbilical} if its shape operator at 
any point of $X$ is non-diagonalizable over $\mathbb{C}$.
We prove that quasi-umbilical surfaces are isothermic, that their conformal deformations depend on one arbitrary function 
in one variable, and show that their conformal Gauss map is harmonic. 
We investigate their geometric structure and show how to construct all quasi-umbilical surfaces 
from null curves in the 4-dimensional neutral space form $S^{2,2}=\{x\in \mathbb{R}^{2,3} \mid \langle x,x\rangle=1 \}$.
\end{abstract}

\maketitle

\section{Introduction}\label{s:intro} 

This paper is a contribution to the subject of conformal Lorentz geometry 
of timelike surfaces in the Einstein universe $\EE$, the conformal
compactification of Minkowski 3-space realized as the space of null rays
through the origin of $\R^{2,3}$. $\EE$ is endowed with the Lorentz metric $g_\mathcal{E}$ 
induced from $\R^{2,3}$.
It is diffeomorphic to $S^1\times S^2$ and with respect to the conformal class
$[g_\mathcal{E}]$ is an example of an oriented, time-oriented, 
conformally flat Lorentz manifold.
As such, it admits a transitive action by the identity
component $\A$ of the pseudo-orthogonal group of $\R^{2,3}$, which is isomorphic
to the group of conformal transformations of $(\EE,[g_\mathcal{E}])$ preserving orientation 
and time-orientation.
%
In the paper we will also use the realization of $\EE$ as the symmetric $R$-space \cite{BDPP,GK,Ti} 
defined by the conjugacy class of parabolic subalgebras 
of $\aa$, the Lie algebra of $\A$. In dimension three, another model for the Einstein universe is  
the Grassmannian of oriented Lagrangian planes of $\R^4$ \cite{BCDG,MNSIGMA,THE}.
If we write $\MM=\mathbb{V}^2_-\oplus \mathbb{V}^3_+$ as the direct sum of a negative-definite 
2-dimensional 
subspace $\mathbb{V}^2_-$ and a positive-definite 3-dimensional subspace $\mathbb{V}^3_+$ and denote by ${S}^1_-$ 
and $S^2_-$, respectively, the unit spheres 
of $\mathbb{V}^2_-$ and $\mathbb{V}^3_+$, then the orientable, time-orientable submanifold 
${S}^1_-\times {S}^2_+$, with the Lorentzian structure inherited from $\MM$, provides another model 
(the \textit{pseudo-metric model}) of the Einstein universe. 

As in the case of definite signature, the 3-dimensional Lorentzian space forms, namely Minkowski, 
de Sitter and anti-de Sitter 3-spaces, can be conformally embedded as open domains of $\EE$, 
the so-called Minkowski, de Sitter, and anti-de Sitter chambers \cite{BCDG,DMN2016,EMN}.

\vskip0.2cm
\noindent \textbf{Background and motivations.}
Let $f: X \to \mathbb{R}^{1,2}$ be a timelike immersion of a surface $X$ in Minkowski 3-space. At umbilical points, the second 
fundamental form $\mathrm{II}$ of $f$ is proportional to the first fundamental form $\mathrm{I}$ and the mean and Gauss 
curvatures $H$ and $K$ satisfy $H^2 - K=0$. For timelike surfaces in Minkowski space, $H^2 - K$ can vanish also 
at nonumbilical points \cite{KM1983}, in which case $\mathrm{I}$ and $\mathrm{II}$ must share a null direction and the shape operator is
non-diagonalizable over $\mathbb{C}$. 
Following Clelland \cite{Cl1}, a timelike immersion $f$ 
is called (totally) \textit{quasi-umbilical} if at every point of $X$ its shape operator $A$ is non-diagonalizable 
over $\mathbb{C}$, 
or equivalently, if at every point of $X$ the trace-free part $A^0$ of the shape operator  
satisfies $\det ({{A}^0})=0$, with ${{A}^0}\neq 0$.

\vskip0.1cm
\noindent\textit{Conformal invariance.}
Since the property of being quasi-umbilical only depends on the conformal class of the induced Lorentz 
metric, we shall profit from studying it in the more appropriate context of conformal Lorentz geometry
by considering quasi-umbilical timelike immersions in the Einstein universe $\EE$.
Beside quasi-umbilical surfaces, there are three other classes
of timelike immersions of constant type,\footnote{Cf. \cite[p. 9]{JeLN} for the notion
of immersion of constant type in a homogeneous space.}
namely the class of elliptic surfaces, for which $\det ({{A}^0})>0$ on $X$, the class of hyperbolic surfaces, 
for which $\det ({{A}^0})<0$ on $X$, and the class
of surfaces for which ${A}^0$ vanishes identically on $X$. The latter consists of totally umbilical embedded Lorentzian tori. 

\vskip0.1cm
\noindent\textit{Related work.}
Clelland \cite{Cl1} classified quasi-umbilical surfaces in Min\-kow\-ski 3-space as ruled surfaces whose 
rulings are all null lines, with the additional property
that any null curve $\gamma$ on the surface transversal to the rulings is nondegenerate (i.e., the
vectors $\gamma'$, $\gamma''$ are linearly independent at each point).
At about the same time, The \cite{THE} investigated 
the conformal Lorentz geometry of
timelike surfaces, including quasi-umbilical ones, 
which he called 2-\textit{parabolic}, 
in connection with his study of the geometry of second-order Monge-Ampere equations.
More recently, Burstall, Musso, and Pember \cite{BMP-ASN} have shown that quasi-umbilical 
surfaces arise as one of the 
four classes of orthogonal surfaces of codimension 2 harmonic sphere congruences,
which also include $S$-Willmore surfaces in the sense of Ejiri \cite{Ejiri}, constant mean curvature 
surfaces in 3-dimensional space forms, 
and surfaces of constant lightcone mean curvature in 3-dimensional lightcones.
There has recently been also a considerable interest in
elliptic and hyperbolic timelike surfaces, especially in connection with the study of the Lorentz 
counterparts of Willmore and isothermic immersions 
\cite{BeSchae, Cal, DEWA, DM1, Ma1, Ma2, NieMaWang, Palmer, THE, Wa}.
Finally, we observe that conformal Lorentz geometry is related to Lie sphere geometry, 
a classical topic in differential geometry which has recently received much attention 
especially in relation with the study of Dupin submanifolds \cite{CC,Ce,Cec-Jen-98}, 
Lie applicable surfaces \cite{MNTo, Pe}, and the theory of integrable systems 
\cite{BH-J, Ferapontov}. Actually, the Einstein universe $\EE$ can be viewed as a double 
covering of the Lie quadric $\mathcal{Q}^3$ in Lie sphere geometry, whose elements 
correspond to oriented circles and points in $\mathbb{R}^2$ \cite{BlaschkeIII, JMNbook}. 

\vskip0.1cm
\noindent\textit{Purposes.}
A first purpose of this paper is to underline and clarify the links of quasi-umbilical immersions
with the general theory of isothermic submanifolds in symmetric $R$-spaces (in the sense of
\cite{BDPP}) and with the classical deformation problem of submanifolds in homogeneous spaces 
\cite{Ca2, BHJPR, JeLN, JeDg,Mu,MNTo,Pe}.  
A second purpose is to give a detailed geometric description of quasi-umbilical surfaces 
by showing that all quasi-umbilical surfaces can be constructed from suitable null curves in
the neutral four-space form 
$$
\SS=\{V\in \MM \mid \langle V,V\rangle =1\} \cong 
 \mathrm{O}^{\uparrow}_+(2,3)/ \mathrm{O}^{\uparrow}_+(2,2).
   $$

\vskip0.2cm
\noindent\textbf{Description of results.} Adapting to our case a basic construction in M\"obius geometry 
that goes back to the work of Blaschke and Thomsen  \cite{BlaschkeIII,Bryant-JDG1984, HJbook,Thomsen}, 
we can define the conformal Gauss map $\NN :X\to \SS$ of a timelike immersion. 
(For the study of spacelike immersions in conformal Lorentz geometry, and more specifically of spacelike Willmore surfaces,
we refer to \cite{AP} and \cite{IM}.)
Geometrically, $\NN$ associates to a point ${x}\in X$ the unique totally umbilical timelike torus 
of the Einstein universe having 
second order analytic contact with $f$ at $f({x})$,
the so-called \textit{central torus} of the surface at $f(x)$.  If $f$ is quasi-umbilical, then
$\NN$ has rank 1, so that $f$ is the envelope of 
a 1-parameter family of totally umbilical Lorentz tori. The second envelope of the family, denoted by $f^{\sharp}:X\to \EE$, 
is referred to as the \textit{dual} of $f$.  
A quasi-umbilical immersion $f$ is called \textit{regular} if $\Lambda$,
the leaf space of the vertical distribution $\ker d\NN$, is a connected 1-dimensional manifold, 
so that $\NN$ factors on $\Lambda$ 
through a submersion 
$\pi_{\Lambda}:X\to \Lambda$ with connected fibers
and a null curve $\Gamma :\Lambda\to \SS$, that is, $\NN=\Gamma\circ \pi_{\Lambda}$. 
The curve $\Gamma$ is called the
\textit{directrix curve} of $f$. Locally, every quasi-umbilical immersion is regular.
The first main result of the paper is the following.

\vskip0.2cm

\noindent {\bf Theorem A.}\,\,\textit{Let $f : X \to \EE$ be a quasi-umbilical timelike immersion. Then the conformal Gauss map 
$\NN:X\to \SS$ of $f$ is harmonic.
In addition, if $f$ is regular, then
$f$ is isothermic and its infinitesimal deformations depend on one arbitrary function in one variable}.

\vskip0.2cm
According to whether the quadratic form $\langle d\NN,df^{\sharp} \rangle$ is identically zero or not,
a quasi-umbilical immersion $f$ is called \textit{exceptional}, respectively, \textit{general}.
We will separately describe the geometric structure of the two classes of immersions under the
assumption of regularity.

\vskip0.2cm
\noindent\textit{Exceptional quasi-umbilical surfaces.}
First, we prove that a quasi-umbilical $f$ is exceptional if and only if its directrix curve 
$\Gamma$ is a \textit{biisotropic} curve of $\SS$, meaning that $\Gamma'$ 
and $\Gamma''$ are linearly independent null vector fields (cf. Proposition \ref{prop:except-biisotropic}).  
We show that biisotropic curves can be constructed from suitable plane curves of a unimodular affine plane.
Then, associated to any biisotropic curve $\Gamma : \Lambda\to \SS$, we consider the \textit{normal tube} 
 ${\mathbb T}_\Gamma\cong \Lambda\times S^1$ of $\Gamma$ and
construct a canonical map $f_{\Gamma}:{\mathbb T}_\Gamma\to \EE$, the \textit{tautological map} 
of $\Gamma$ (cf. Section \ref{s3.1} and Definition \ref{def:taut-map}).
The second main result accounts for the geometric structure of exceptional quasi-umbilical surfaces.

\vskip0.2cm

\noindent {\bf Theorem B.}\,\,\textit{Let $\Gamma :\Lambda\to \SS$ be a biisotropic curve. 
The tautological map $f_\Gamma$ of $\Gamma$ is a timelike immersion with conformal Gauss map $\Gamma\circ \pi_\Lambda$.
The umbilic locus of $f_{\Gamma}$ is the disjoint union of two {null curves} 
${\rm C}_{\pm}\subset {\mathbb T}_\Gamma$ 
that disconnect  ${\mathbb T}_\Gamma$ into two open sets,
${\mathbb T}^{\pm}_\Gamma$. Moreover, the restrictions of $f_{\Gamma}$ to ${\mathbb T}^{\pm}_\Gamma$ are  
exceptional quasi-umbilical immersions.
Conversely, if $f:X\to \EE$ is a regular quasi-umbilical immersion of exceptional type 
and $\Gamma:\Lambda\to \SS$ is its directrix curve, then $f(X)$ is contained in either
$f_{\Gamma}({\mathbb T}^{+}_\Gamma)$ or $ f_{\Gamma}({\mathbb T}^{-}_\Gamma)$.}

\vskip0.2cm
{This implies that, locally, exceptional quasi-umbilical surfaces arise as immersions of
a component of the normal tube of a biisotropic curve of $S^{2,2}$.}
Another consequence is that quasi-umbilical immersions of exceptional type depend on one function 
in one variable, namely the affine curvature of the affine curve generating the directrix curve. 
Moreover, they are second order 
deformations of each other (cf. Proposition \ref{pr1.s3.2}). 
The general construction will be illustrated for the case in which the generating affine curve is a conic.

\vskip0.2cm
\noindent\textit{General quasi-umbilical surfaces.} 
We start by showing that the directrix curve $\Gamma$ of a general quasi-umbilical immersion 
is a \textit{generic} null curve, meaning that $\Gamma''$ can be either 
timelike or spacelike. 
Then, proceeding in analogy with \cite{MNCQG,MNSIAM,MN-NONLIN2010}, for a generic $\Gamma$ we introduce
a preferred parametrization (proper time) and construct
a canonical moving frame along the curve, from which one determines
the two fundamental differential invariants of $\Gamma$, namely the \textit{left and right curvatures} 
$\kappa_\lambda$ and $\kappa_\varrho$.
The canonical moving frame is used to build two normal tubes ${\mathbb T}_{\lambda}$
and ${\mathbb T}_{\varrho}$ along 
 $\Gamma$, the \textit{left and right normal tubes} of $\Gamma$, 
naturally identified by a diffeomorphism $J^\varrho_\lambda$. The left and right tubes intersect along 
two null curves ${\rm C}_{\pm}$, whose complementary set 
has two connected components,
${\mathbb T}_\lambda^{\pm}$ and ${\mathbb T}_{\varrho}^{\pm}$, the \textit{parabolic components} 
of the normal tubes.
The canonical projections $\pi_\lambda :{\mathbb T}_\lambda \to \Lambda$ and
$\pi_\varrho :{\mathbb T}_\varrho \to \Lambda$ make ${\mathbb T}_\lambda$
and ${\mathbb T}_\varrho$ into circle bundles over $\Lambda$. We then construct
two canonical maps, $f_{\lambda}:{\mathbb T}_\lambda\to \EE$ and 
$f_{\varrho}:{\mathbb T}_\varrho\to \EE$,
the \textit{left and right tautological maps} of $\Gamma$.
The third main result accounts for the structure of general quasi-umbilical surfaces.

\vskip0.2cm

\noindent {\bf Theorem C.}\,\,\textit{Let $\Gamma :\Lambda\to \SS$ be a generic null curve. 
\vskip0.02cm
 $(1)$ The tautological map $f_{\lambda}$ is a timelike immersion with 
umbilic locus ${\rm C}_{+}\cup {\rm C}_{-}$ and conformal Gauss map ${\mathcal N}_{f_\lambda}=\Gamma\circ \pi_{\lambda}$. 
The restrictions $f^{\pm}_{\lambda}$ of $f_{\lambda}$ to the parabolic components 
${\mathbb T}_\lambda^{\pm}$ of the left normal tube ${\mathbb T}_\lambda$
are quasi-umbilical immersions of general type 
with dual maps $(f^{\pm}_{\lambda})^{\sharp}=f^{\pm}_{\varrho}\circ J^{\varrho}_{\lambda}$. 
\vskip0.02cm
$(2)$ The tautological map $f_{\varrho}$ is a timelike immersion with 
umbilic locus ${\rm C}_{+}\cup {\rm C}_{-}$ and conformal Gauss map ${\mathcal N}_{f_\varrho}=\Gamma\circ \pi_{\varrho}$. 
The restrictions $f^{\pm}_{\varrho}$ of $f_{\varrho}$ to the parabolic components ${\mathbb T}_\varrho^{\pm}$ 
of the right normal tube ${\mathbb T}_\varrho$
are quasi-umbilical immersions of general type with
dual maps $(f^{\pm}_{\varrho})^{\sharp}=f^{\pm}_{\lambda}\circ J_{\varrho}^{\lambda}$. 
\vskip0.02cm
Conversely, let $f:X\to \EE$ be a regular quasi-umbilical 
immersion of general type. Then $f(X)$ is contained in either 
$f_{\lambda}({\mathbb T}_{\lambda}^{+})$ or $f_{\lambda}({\mathbb T}_{\lambda}^{-})$, 
where $f_{\lambda}$ is the left 
tautological immersion originated by the directrix curve $\Gamma$ of $f$.
}

\vskip0.2cm

Theorem \ref{thmC} implies that quasi-umbilical immersions of general type depend on two
functions in one variable, namely the left and right curvatures of their directrix curves.
As for the deformation problem, we show that two quasi-umbilical immersions 
of general type $f$ and $\tilde{f}$ are second order conformal deformations of each other if and only 
if they have the same helicity (to be specified) and if either $\kappa_\lambda = \tilde{\kappa}_\lambda$
or $\kappa_\varrho = \tilde{\kappa}_\varrho$ (cf. Remark \ref{r:def-gen-type}).
{The construction will be illustrated by an example at the end of Section \ref{s4}}.

It is interesting to note that the construction of the canonical timelike immersions arising from biisotropic or 
generic null curves of $\SS$ is algorithmic and only involves algebraic manipulations and derivatives. 
The parametrizations of quasi-umbilical immersions given in \cite{Cl1} can be obtained restricting the maps 
$f_{\Gamma}$
to the subdomains of $\Lambda\times S^1$ that are sent into a Minkowski chamber of $\EE$ \cite{DMN2016,EMN}. 
Moreover, our construction allows us to exhibit quasi-umbilical immersed and embedded tori or cylinders, 
a phenomenon that can never occur in 3-dimensional Minkowski geometry.

\vskip0.2cm

The material is organized as follows. Section \ref{s1} briefly reviews some basic facts about the 
geometry of the Einstein universe 
(cf. \cite{BCDG,DMN2016,EMN,GK, GS, MNSIGMA,THE}) and
describes the AdS chambers and the ``toroidal model" of the Einstein universe which, in turn, is used to 
visualize 
the geometric features of quasi-umbilical surfaces. 
Section \ref{s2} recalls the construction of the conformal Gauss map and of the 
dual map of a quasi-umbilical immersion. 
We then recall the notion of an isothermic surface, adapting to 
the context of timelike surfaces in $\EE$
the terminology of the general theory of isothermic submanifolds in
symmetric $R$-spaces, and discuss the related concept of second order deformation.
Section \ref{s2} concludes with the proof of Theorem \ref{thmA}. 
Section \ref{s3} describes the geometric structure of exceptional quasi-umbilical immersions 
and proves Theorem \ref{thmB}. 
Section \ref{s4} accounts for the geometric structure of general quasi-umbilical immersions and proves Theorem \ref{thmC}. 

\section{Preliminaries}\label{s1}

\subsection{The automorphism group}

Consider $\R^5$ with the  scalar product of signature $(2,3)$ 
\begin{equation}\label{scalar-pro}
  \langle X,Y\rangle = -(X^0Y^4+X^4Y^0)-X^1Y^1+X^2Y^2+X^3Y^3= \T X\, h\,Y,
   \end{equation}
where $h=(h_{ij})$ and $h_{ij}=h_{ji}$. Choose an oriented spacelike
3-dimensional vector subspace $\mathbb{V}^3_+\subset \R^5$ and a positive-oriented orthogonal
basis $V_1$, $V_2$, $V_3$ of $\mathbb{V}^3_+$. 

Let $\mathtt{D}_2$ be the set of all nonzero decomposable bivectors $X\wedge Y\in \Lambda^2(\R^5)$ such that
the restriction of \eqref{scalar-pro} to $\mathrm{span}\{X,Y\}=: [X\wedge Y]$ is negative definite.
An element $X\wedge Y\in \mathtt{D}_2$ is said to be 
{\textit{positive}}
if ${\det}(X,Y,V_1,V_2,V_3)>0$. The set of all such positive bivectors is a connected component of $\mathtt{D}_2$, 
denoted by ${\mathtt D}^{\uparrow}_2$. 
The boundary $\partial {\mathtt D}^{\uparrow}_2$ of ${\mathtt D}^{\uparrow}_2$ consists of all 
isotropic bivectors $V\wedge W$ (i.e., 
$V\wedge W\neq 0$,  
$\langle V,V\rangle=   \langle W,W\rangle=   \langle V,W\rangle=0$) satisfying the 
condition
{$\langle X\wedge Y , V \wedge W \rangle =\langle X,V\rangle\langle Y,W\rangle - \langle Y,V\rangle\langle X,W\rangle > 0$}, 
for every $X\wedge Y\in {\mathtt D}^{\uparrow}_2$. 
The elements of $\partial {\mathtt D}^{\uparrow}_2$ 
are said to be \textit{future-directed}. 

\vskip0.1cm
Let $\MM$ denote $\R^5$
with the scalar product \eqref{scalar-pro}, the orientation induced by the determinant, and the
time-orientation determined by ${\mathtt D}^{\uparrow}_2$.
The \textit{automorphism group} of $\MM$ is
the 10-dimensional Lie group $\mathrm{Aut}(\mathbb{R}^{2,3})$ of linear isometries of $\MM$ preserving the given orientation and
time-orientation.

Let $M_{\mathfrak{B}}(\Phi)$ be the matrix representing $\Phi\in \mathrm{Aut}(\mathbb{R}^{2,3})$ with respect to 
a chosen basis ${\mathfrak B} =({B}_0,\dots, {B}_4)$ of $\MM$. 
Similarly, let $G_{\mathfrak{B}}$ be the symmetric matrix representing the scalar product $\langle \cdot\,,\cdot \rangle$ 
with respect to ${\mathfrak B}$.  
For every choice of ${\mathfrak B}$,
the map 
{$\mathrm{Aut}\left({\mathbb{R}^{2,3}}\right) \ni \Phi\mapsto M_{\mathfrak{B}}(\Phi)\in \mathrm{GL}(5,\mathbb{R})$} 
is a faithful matrix representation of 
$\mathrm{Aut}\left(\mathbb{R}^{2,3}\right)$.
We say that:
\begin{itemize}

\item ${\mathfrak B}$ is a \textit{lightcone basis} 
if it is positive-oriented, if\footnote{Here $E^a_b$, $0\leq a,b\leq 4$, denotes the elementary matrix with 1
in the $(a,b)$ place and $0$ elsewhere.}
$$
  G_{{\mathfrak B}}= h = -(E^0_4+E^4_0)-E^1_1+E^2_2+E^3_3,
       $$
 and if the isotropic bivector ${B}_0\wedge ({B}_1+{B}_2)$
is future-directed;

\item ${\mathfrak B}$ is 
a \textit{pseudo-orthogonal basis} if it is positive-oriented, if
$$
   G_{{\mathfrak B}}=-E^0_0-E^1_1+E^2_2+E^3_3+E^4_4,
     $$
and if the isotropic bivector $({B}_0-{B}_4)\wedge({B}_1+{B}_2)$ 
is future-directed.
\end{itemize}
Let
\begin{equation}\label{intertwining1}
T_\mathrm{lp}=\frac{1}{\sqrt{2}}(E^0_0+E^4_0)+E^1_1+E^2_2+E^3_3+\frac{1}{\sqrt{2}}(E^4_4-E^0_4).
\end{equation}
Then, ${\mathfrak B}$ is a lightcone basis if and only if ${\mathfrak B}\cdot T_\mathrm{lp}$ is a pseudo-orthogonal basis. In particular, 
if $\mathfrak{E}= (E_0,\dots,E_4)$ is the standard basis of $\MM$,
$\mathfrak{P}= (P_0,\dots,P_4)= \mathfrak{E}\cdot T_\mathrm{lp}$ is called
the {\it standard pseudo-orthogonal basis} of $\MM$.

\vskip0.1cm
Choosing the standard basis $\mathfrak{E}$, 
the automorphism group $\mathrm{Aut}(\mathbb{R}^{2,3})$ is identified
with 
the identity component of the pseudo-orthogonal group of \eqref{scalar-pro}, namely
$$
  \A= \left\{F\in \SL(5,\R) \mid \T F\, h \, F = h,\, {F_0\wedge (F_1 +F_2) \in \partial {\mathtt D}^{\uparrow}_2} \right\},
     $$
where $F_0, \dots, F_4 \in \R^{2,3}$ denote the column vectors of $F$. If $F\in \A$, the ordered set
$(F_0, \dots, F_4)$ defines a lightcone basis of $\R^{2,3}$. Conversely, any lightcone basis arises in this way.
Thus the set of all lightcone basis of $\MM$ can be identified with $\A$.

\vskip0.1cm
The Lie algebra of $\A$  is the vector space
\[
 \aa=\{\mathbf{a} \in {\mathfrak{sl}(5,\mathbb R}) \mid  \T \mathbf{a}\,h + h\,\mathbf{a}=0\},
  \]
 with the commutator of matrices as a bracket. 
Taking
   \[
 \begin{array}{lllll}
 M^0_0=E^0_0-E^4_4,\! & M^1_0=E^1_0-E^4_1,\! & M^2_0=E^2_0+E^4_2,\! &  M^3_0=E^3_0+E^4_3,\!\\
 M^2_1=E^2_1+E^1_2,\! & M^3_1=E^3_1+E^1_3,\! &M^3_2=E^3_2-E^2_3\!,  &M^1_4=E^1_4-E^0_1,\!\\
 M^2_4=E^2_4+E^0_2,\! & M^3_4=E^3_4+E^0_3\\
   \end{array}
   \]
as a basis of $\aa$, the \textit{Maurer--Cartan form} $\varphi =(\varphi^i_j)= F^{-1}dF$ of $\A$ can be written as
\[
{\begin{split} 
\varphi  &= \varphi^0_0M^0_0+ \varphi^1_0M^1_0+ \varphi^2_0M^2_0+ \varphi^3_0M^3_0+ \varphi^2_1M^2_1\\
& \qquad 
 + \varphi^3_1M^3_1+ \varphi^3_2M^3_2+
    \varphi^1_4M^1_4+ \varphi^2_4M^2_4+ \varphi^3_4M^3_4,
   \end{split}}
   \]
where the left-invariant 1-forms {$\varphi^0_0=-\varphi^4_4$, $\varphi^1_0=-\varphi^4_1$, $\varphi^2_0=\varphi^4_2$, 
$\varphi^3_0=\varphi^4_3$, $\varphi^2_1= \varphi^1_2$, $\varphi^3_1=\varphi^1_3$, 
$\varphi^3_2=-\varphi^2_3$, $\varphi^1_4=-\varphi^0_1$, $\varphi^2_4= \varphi^0_2$, $\varphi^3_4=\varphi^0_3$}
are linearly independent and span the dual Lie algebra $\aa^*$.
The Maurer--Cartan form  satisfies the \textit{Maurer--Cartan equations} 
\begin{equation}\label{MCeq}
 d \varphi = - \varphi\wedge  \varphi.
   \end{equation}

\subsection{The $(1+2)$-Einstein universe} 

Consider the orthogonal direct sum decomposition $\MM=\mathbb{V}^2_-\oplus \mathbb{V}_+^3$
into the oriented, negative definite subspace
$\mathbb{V}_-^2 =[P_0\wedge P_1]$ and the spacelike subspace
$\mathbb{V}_+^3=[P_2\wedge P_3\wedge P_4]$, where $\mathfrak{P}=(P_0,\dots, P_4)$ is the standard
pseudo-orthogonal basis of $\MM$. 
Let $(x_0,\dots,x_4)$ be the Cartesian coordinates with respect to $\mathfrak{P}$.
Denote by ${S}^1_-$ the unit circle 
of $\mathbb{V}_-^2$, by ${S}^2_+$ 
the unit sphere of $\mathbb{V}_+^3$, by ${J}$ the counterclockwise rotation of $\pi/2$ in 
the oriented plane $\mathbb{V}_-^2$, and by $V_-$ and $V_+$ the orthogonal projections of 
$V\in \MM$ onto $\mathbb{V}_-^2$ and $\mathbb{V}_+^3$,  respectively.  
Let $\mathcal{E}^{1,2}$ be the Cartesian product  ${S}^1_- \times {S}^2_+$. 
The scalar product on $\MM$ induces a Lorentzian pseudo-metric $g_{{\mathcal E}}$ on $\mathcal{E}^{1,2}$. 
The normal bundle of $\mathcal{E}^{1,2}$ is spanned by the 
unit normal vector fields defined by $\mathbf{n}_1|_{V}=V_-$ 
and $\mathbf{n}_2|_{V}=V_+$, for each $V \in \EE$.  Contracting the volume form $dx^0\wedge \dots \wedge dx^4$ 
with $\mathbf{n}_1$ and $\mathbf{n}_2$
yields a volume form $d{V}_{\mathcal{E}}$ on $\mathcal{E}^{1,2}$. We time-orient $\mathcal{E}^{1,2}$ by 
requiring that the timelike vector field $\mathbf{t}|_{V}={J}V_-$  is future-oriented.

\begin{defn}
The Lorentzian manifold $(\mathcal{E}^{1,2},g_{\mathcal{E}})$, with the above specified orientation
and time-orientation, is called the {\it $(1+2)$-Einstein universe}.
\end{defn}

For each nonzero vector $X\in \MM$, let $[X]$ denote the oriented line spanned by $X$ (i.e., the ray of $X$).
The map $\mathcal{E}^{1,2} \ni X \mapsto [X]$ allows us to identify $\mathcal{E}^{1,2}$ with the manifold of oriented 
null rays of $\MM$. 
We will make no distinction between these two models. 

\begin{remark}
As a model of the Einstein universe it is often used the non-orientable manifold 
$\widehat{\mathcal{E}}^{1,2}$
of null (unoriented) lines of $\MM$. This is the quotient of $ \mathcal{E}^{1,2}$ by the orientation-reversing 
involutory isometry $[X]\mapsto [-X]$. 
\end{remark}

Under the identification above, $\A$ acts effectively and transitively on $\mathcal{E}^{1,2}$ 
on the left by $F\cdot [X ] =[F\cdot X]$, for each $F\in \A$ and $[X]\in \mathcal{E}^{1,2}$. 
This action preserves the oriented and time-oriented conformal Lorentzian structure of $\mathcal{E}^{1,2}$.
It is a classical result that every restricted conformal transformation of $\EE$ is induced by a unique element of 
$\A$ (cf. \cite{BCDG,EMN}). Accordingly, we call $\A$ 
the (\textit{restricted}) \textit{conformal group} of $\EE$.

\subsection{The AdS chambers and the neutral space form $\SS$} 

Let $S\in \MM$ be a unit spacelike vector. The \textit{positive and negative AdS (anti-de Sitter) chambers} 
of $\EE$ determined by ${S}$ 
are the open sets
\[
 \mathcal{A}^{\varepsilon}_{{S}}=\{[{V}]\in \EE \mid 
 \sgn\langle {V},{S}\rangle
 = \varepsilon \}\subset \EE,\quad \varepsilon =\pm1. 
 \]
 The common boundary of the AdS chambers is the 2-dimensional torus 
 $\partial  \mathcal{A}_{{S}} = \{[{V}]\in \EE \mid \langle {V},{S}\rangle = 0 \}\cong S^1 \times S^1$, called the \textit{AdS wall}.  
Accordingly, $\EE$ splits as the disjoint union of the two AdS chambers and the AdS wall.  
 The AdS walls are totally umbilical timelike surfaces (Lorentz tori) of $\EE$. Contracting $d{V}_{\mathcal{E}}$ 
 with ${S}$ defines a volume form on $\partial  \mathcal{A}_{{S}}$.  
 Thus, the unit spacelike vectors of $\MM$ are in 1:1 correspondence with the oriented AdS walls. 
 In this way, the smooth hyperquadric 
 $$
   \SS=\{{S}\in \MM \mid \langle {S}, {S}\rangle = 1\}\subset \MM
     $$ 
can be viewed as the manifold of all oriented AdS walls of $\EE$.
The scalar product of $\MM$ induces a neutral $(2,2)$ pseudo-metric 
$g_{{S}}$ on $\SS$ \cite{JeRi}. The group $\A$ acts transitively on $\SS$ on the left 
preserving $g_{{S}}$. 
With respect to such an action, $\SS$ is a symmetric space. The map
 \begin{equation}\label{S22}
 \mathbb{V}_-^2\oplus {S}^2_+\ni V_-+V_+  \longmapsto V_-+\sqrt{1-\langle V_-,V_-\rangle}V_+\in \SS 
     \end{equation}
 is a smooth diffeomorphism. So, as a manifold, $\SS$ is the Cartesian product of $\R^2$ with a 2-dimensional sphere.

 \begin{remark}
As a model for the \textit{anti-de Sitter 3-space}, we take the hyperquadric $ \mathcal{A}\subset \R^4$ consisting of 
all vectors $\mathbf{p}=({x}, {y})$, ${x} =(x^1,x^2)$, ${y}=(y^1,y^2)$ of $\R^4$, such that
 $-(x^1)^2-(x^2)^2+(y^1)^2+(y^2)^2=-1$, equipped with the Lorentzian structure induced by 
 the neutral scalar product
$ (\mathbf{p},\tilde{\mathbf{p}})_{(2,2)}=-x^1 \tilde{x}^1-x^2 \tilde{x}^2+y^1 \tilde{y}^1+y^2 \tilde{y}^2$.  
Let ${\mathfrak{B}}$ be a pseudo-orthogonal basis such that ${B}_4= {S}$. 
Then, $\mathcal{A}^{\varepsilon}_{{S}}$ is the 
image of $\mathcal{A}$ via the conformal embedding
\[
  \mathcal{A} \ni ({x},{y}) \mapsto
   \left[x^1{B}_0+x^2{B}_1+y^1{B}_2+y^2{B}_3+\varepsilon {B}_4\right] \in \mathcal{A}^{\varepsilon}_{{S}}. 
   \]
Let  $\mathrm{T}\subset \R^3$ be the open solid torus 
swept out by rotating around the $z$-axis the (open) unit disk 
in the $xz$-plane centered at $(2,0,0)$.  
Using local coordinates $0\leq r <1$, $\phi$, $\theta \in [0,2\pi)$, the points of $\mathrm{T}$ can be parametrized by
$$
P(r,\phi,\theta) = \Big((r\cos\phi+2)\cos\theta, (r\cos\phi+2)\sin\theta, r\sin\phi\Big).
$$
The map ${\mathcal T}_{\varepsilon}: \mathrm{T}\to \mathcal{A}^{\varepsilon}_{{S}}$,
defined by
$$
   P(r,\phi,\theta)\mapsto \left[r\cos(\phi)B_0+r\sin(\phi)B_1+\cos(\theta)B_2+\sin(\theta)B_3+\varepsilon \sqrt{1-r^2}B_4\right]
     $$
is  a smooth diffeomorphism which extends by continuity to the boundary $\partial \mathrm{T}$ of the solid torus 
$\mathrm{T}$, giving rise to a diffeomorphism 
$$
    \partial \mathrm{T}\ni P(1,\phi,\theta) \mapsto [\cos(\phi)B_0+\sin(\phi)B_1+\cos(\theta)B_2+\sin(\theta)B_3 ]\in 
      \partial  \mathcal{A}_{{S}}
        $$ 
between the boundary $\partial \mathrm{T}$ of the solid torus and the AdS wall $\partial  \mathcal{A}_{{S}}$.
Thus, topologically, $\EE$ can be identified with the disjoint union
$\overline{\mathrm{T}}\times \{-1,1\}$ of two copies of the closed solid torus
modulo the equivalence relation $[(P,\epsilon)]_{\sim}=\{(P,\epsilon)\}$, 
if $P\in \mathrm{Int}(\mathrm{T})$, and $[(P,\epsilon)]_{\sim}=\{(P,1)(P,-1)\}$, if $P\in \partial \mathrm{T}$.
In the following, we will use the ``toroidal" projections ${\mathcal T}_{\varepsilon}^{-1} : \overline{ \mathcal{A}}^{\varepsilon}_{{S}}\to \overline{\mathrm{T}}$ to visualize the geometric content of some of our results. 
\end{remark}

\subsection{Other models of the Einstein universe}\label{models-of-EU}

\subsubsection{$\EE$ as symmetric $R$-space}
The transitive action of $\A$ on $\EE$ defines a principal ${H}_0$-bundle
\begin{equation}\label{H0-pric}
 \pi_{{\mathcal E}}:  \A \ni F\longmapsto [{F}_0]\in \EE,
        \end{equation}
        where ${H}_0$ is the isotropy subgroup of $\A$ at $[{E}_0]$, namely
the 7-dimensional connected  Lie subgroup of $\A$ 
\[
H_0 = \left\{ R(r,L,x) =
\begin{pmatrix}
e^r & e^r\, \T x \epsilon_{1,2} L & \frac{e^{r}}{2}\, \T x \epsilon_{1,2} x	\\
0   &  L  &   x  \\
0   &   0  & e^{-r}
\end{pmatrix} \mid  \begin{array}{l} r \in \mathbb{R}, \, x \in \mathbb{R}^{3}\\
\T L = \epsilon_{1,2} L^{-1} \epsilon_{1,2}  \\ 
\epsilon_{1,2} = \mathrm{diag}(-1,1,1) \end{array} 
\right\}.
      \]
The Lie algebra of $H_0$ is the subalgebra of $\aa$
\[
\mathfrak{h}_0 = \left\{ \begin{pmatrix}  r& \T x \epsilon_{1,2} &  0\\
0 & A & x\\
0 & 0 & -r
\end{pmatrix} \mid \begin{array}{l} r \in \mathbb{R}, \, x \in \mathbb{R}^{3}\\
\T A =- \epsilon_{1,2} A \epsilon_{1,2} \\
\epsilon_{1,2} = \mathrm{diag}(-1,1,1) \end{array} 
\right\}.
\]
The polar space $\mathfrak{h}_0^\perp$ of $\mathfrak{h}_0$ with respect to the Killing form of $\aa$ is 
\[
\mathfrak{h}_0^\perp = \left\{ \begin{pmatrix}  0& \T x \epsilon_{1,2} &  0\\
0 & 0 & x\\
0 & 0 &0
\end{pmatrix} \mid \begin{array}{l}  x \in \mathbb{R}^{3}\\
\epsilon_{1,2} = \mathrm{diag}(-1,1,1) \end{array} 
\right\},
\]
which is an abelian subalgebra of $\aa$.
Thus $\mathfrak{h}_0$ is a \textit{height} 1 parabolic subalgebra of $\aa$ (cf. \cite{BDPP} for the 
definitions and more details).
More generally, for each $[V]\in \EE$, the polar of ${\mathfrak h}_{[V]}$, the Lie algebra of
the isotropy subgroup ${H}_{[V]}$ at $[V]$, is an abelian subalgebra of $\aa$, and hence 
${\mathfrak h}_{[V]}$ is a \textit{height} 1 parabolic subalgebra of $\aa$.

 The adjoint representation of  $\A$ induces an action on the Grassmannian of 7-dimensional linear subspaces of $\aa$ which, 
 in turn, gives rise to a transitive action of $\A$ on the orbit ${\mathcal O}$ through ${\mathfrak h}_0={\mathfrak h}_{[E_0]}$. 
 Accordingly, the orbit ${\mathcal O}$ acted upon transitively by $\A$ is a self-dual symmetric $R$-space
 (cf. \cite{BDPP, GK,Ti}).\footnote{A parabolic subalgebra $\mathfrak{q}\subset \aa$ is said to be \textit{complementary} to
 a given parabolic subalgebra $\mathfrak{p}$ if $\aa = \mathfrak{p} + \mathfrak{q}$. If $\mathfrak{q}$ is complementary 
to $\mathfrak{p}$, the orbit $\mathcal{O}^*$ of $\mathfrak{q}$ coincides with the set of all parabolic subalgebras complementary to
$\mathfrak{p}$. The orbit $\mathcal{O}^*$ is the \textit{dual} of $\mathcal{O}$. The symmetric $R$-space $\mathcal{O}$  
is \textit{self-dual} if $\mathcal{O}^*$ and $\mathcal{O}$ coincide.}   
 Since ${\mathfrak h}_{[F\cdot V]}=Ad_{F}\cdot {\mathfrak h}_{[V]}$ and ${\mathfrak h}_{[V]}={\mathfrak h}_{[W]}$ if and only 
 if $[V]=[W]$, the map $ \EE \ni[V]\mapsto {\mathfrak h}_{[V]}\in {\mathcal O}$ defines an equivariant smooth diffeomorphism. 
 This describes $\EE$ as the symmetric $R$-space given by 
 the conjugacy class $\mathcal{O}$ of height 1 parabolic
 subalgebras of $\aa$.

\vskip0.1cm
\subsubsection{$\EE$ as oriented Lagrangian Grassmannian}

The Einstein universe $\EE$ can also be realized as the \textit{Grassmannian of oriented Lagrangian planes} of $\R^4$.
This description of $\EE$ is specific of dimension three. 
Let $(\R^4, \omega)$ be $\R^4$ with the standard symplectic form $\omega(x,y)=\T xJy$, where
$J = \big(\begin{smallmatrix} 0&I_2\\-I_2&0\end{smallmatrix}\big)$. Next, on the 6-dimensional vector space
${\Lambda^2}\left({(\R^4)}^*\right)$ 
consider the neutral scalar product $(\alpha, \beta)$ of signature $(3,3)$, defined by $(\alpha,\beta)\omega^2 = \alpha\wedge\beta$.
By construction, $\omega$ is a spacelike vector, i.e., $(\omega, \omega)=1$, which implies that the restriction of $(\cdot\,,\cdot)$
to the 5-dimensional polar space $[\omega]^\perp$ has signature $(2,3)$.
Hence, $[\omega]^{\perp}$ together with $(\cdot\,,\cdot)$ can be identified with $\MM$.  
Thus, the Einstein universe $\EE$
(i.e., the manifold of null rays of $\R^{2,3} \cong [\omega]^\perp$) can be
canonically identified with the Grassmannian of oriented Lagrangian planes of $(\R^4,\omega)$ (cf. \cite{DMN2016, THE}).

\section{Quasi-umbilical timelike immersions}\label{s2}

\subsection{Timelike immersions}\label{s2.1}

Let $X$ be an \textit{oriented} connected surface. An immersion $f:X\to \EE$ is said to be \textit{timelike} if the 
induced metric $f^*(g_{{\mathcal E}})$ is Lorentzian. 
{Observe that $(X, f^*(g_{{\mathcal E}}))$ is time-orientable.}
When no confusion can arise, we use $f$ to denote both the map into $\EE$ and a lift of $f$
to $\MM$. 

Let $\pi_0 : {\mathcal F}_0(f)\to X$, where
 $$
   {\mathcal F}_0(f)=\{(x,F)\in X\times \A \mid  f(x)=\pi_0(F) = [F_0]\},
      $$
be the pullback by $f$ of the principal $H_0$-bundle $\pi_{\mathcal{E}}:\A\to \EE$.  
A \textit{conformal frame field} along $f$ is a local section of ${\mathcal F}_0(f)$, that is,
a smooth map $F:U\to \A$, where $U$ is an open set of $X$, such that $f = \pi_0 \circ F$.
%
For any such frame field we put $\phi =  F^*\varphi = (\phi^i_j)$. Then\footnote{We suppress the tensor 
symbol in the tensor products $\phi^j_k\otimes F_j$ occurring in the sum.}
\[
   dF_k =\sum_j \phi^j_kF_j \quad (k=0,\dots,4).
     \]
Given a conformal frame field on $U$, any other on $U$ is given by
\[
 \tilde{F} = F R,
   \]
where $R : U \to H_0$ is a smooth map. If $\tilde{\phi} = (\tilde{F})^\ast \varphi$, then
\begin{equation}\label{frame-change}
  \tilde{\phi} = R^{-1} \phi R + R^{-1} dR.
   \end{equation}

\begin{defn}
A conformal frame field $F:U \to \A$ is of \textit{first order} if at every point of $U$
\[
 \phi^3_0 = 0, \quad  \phi^1_0 \wedge \phi^2_0 >0.
   \]
    \end{defn}
It is easily seen that first order frame fields exist locally. Moreover, if $F$ is a first order frame field, 
then any other on $U$ is given by $\tilde{F} = F R$, for a smooth map $R : U \to H_1$, and
\[
  H_1 = \left\{R(r,s,x,y)  = \begin{pmatrix}
e^r & e^r\, \T x \epsilon_{1,1} B & e^r y & e^{r}    \frac{\T x \epsilon_{1,1} x +y^2}{2}\\
0    &  B   & 0  & x  \\
0    &  0   &1   & y \\
0    &   0  &0   & e^{-r}
\end{pmatrix} \in H_0
    \right\},
       \]
where 
\[
 r \in \mathbb{R},  \quad
B =\left(\begin{smallmatrix} \cosh{s} & \sinh{s}\\\sinh{s} & \cosh{s}\end{smallmatrix}\right), \, s\in \mathbb{R},
\quad x \in \mathbb{R}^{2}, \quad y\in \mathbb{R},
\quad\epsilon_{1,1} = \mathrm{diag}(-1,1).
  \]
The totality of first order frames along $f$ gives rise to a principal $H_1$-bundle $\pi_1 : {\mathcal F}_1(f)\to X$,
where ${\mathcal F}_1(f) = \{(x, F) \in X\times \A  \}$, being $F$ any local first order frame field along $f$.

\begin{figure}[ht]
\begin{center}
\includegraphics[height=6cm,width=6cm]{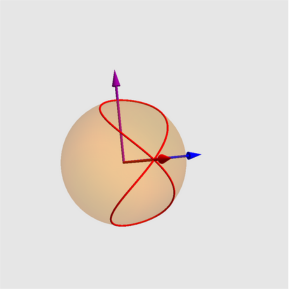}
\includegraphics[height=6cm,width=6cm]{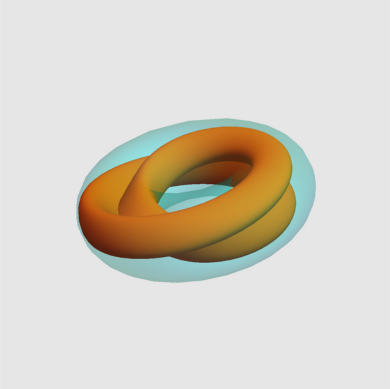}
\caption{\small The Viviani curve (a spherical lemniscate) (cf. \cite{Ha}) and the associated timelike immersion of the Klein bottle in the Einstein universe.}\label{FIG1}
 \end{center}
  \end{figure}

\begin{defn}
A first order frame field $F$ along $f$ is of \textit{second order} if
\begin{equation}\label{2.1}
  \phi^3_1\wedge \phi^2_0+\phi^3_2\wedge \phi^1_0=0.
   \end{equation}
\end{defn}

A proof of the existence of second order frame fields, using different notation, can be found in \cite[p. 99]{THE} 
(cf. also \cite{Bryant-JDG1984,JMNbook,Mu} for the M\"obius case).

\begin{prop}
Second order frame fields exist near any point of $X$. Moreover, if $F, \hat{F} : U\to \A$ are second 
order frame fields, then
$\hat{F} = F R$, where $R: U \to H_2$ is a smooth map from $U$ into the closed subgroup 
$$
     {H}_2=\{R(r,s,x,y) \in { H}_1 \mid  y = 0\}.
         $$
The second order frame fields along $f$ are the local sections of a reduced subbundle 
$\pi_2 : {\mathcal F}_2(f)\to X$ of $\pi_0 :{\mathcal F}_0(f)\to X$, with 
structure group ${H}_2$.
\end{prop}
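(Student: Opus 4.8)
The plan is to run the standard method of moving frames reduction, exactly as one does in Möbius geometry for surfaces in the conformal sphere. Starting from a first order frame field $F:U\to\A$, the strategy is to analyze how the condition \eqref{2.1} transforms under the residual gauge group $H_1$, show it can always be arranged (existence), and then determine precisely which subgroup of $H_1$ preserves it (uniqueness/structure group). First I would write out, using \eqref{frame-change}, the transformation law $\tilde\phi = R^{-1}\phi R + R^{-1}dR$ for a general $R=R(r,s,x,y)\in H_1$, and extract from it the transformation of the relevant semibasic $1$-forms. Since $F$ is first order, $\phi^1_0,\phi^2_0$ form a coframe on $U$ and $\phi^3_0=0$; a routine computation shows that under $R$ the forms $\phi^1_0,\phi^2_0$ only rescale/rotate by the $(r,s)$ part (the $H_1/H_2$ direction $y$ and the translation part $x$ act trivially on them), while $\phi^3_1,\phi^3_2$ pick up corrections linear in the entries of $R$. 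In particular the combination appearing in \eqref{2.1}, namely $\Theta:=\phi^3_1\wedge\phi^2_0+\phi^3_2\wedge\phi^1_0$, is a semibasic $2$-form on the first order bundle, hence $\Theta = c\,\phi^1_0\wedge\phi^2_0$ for some function $c$ on ${\mathcal F}_1(f)$, and one needs to see how $c$ transforms.

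The key computational step is to show that the parameter $y$ in $R(r,s,x,y)\in H_1$ acts on $c$ by an affine translation: concretely, $\tilde c = \lambda(r,s)\,(c + y)$ (or an analogous formula with $y$ replaced by a nonzero multiple of it), so that the $y$-direction of $H_1$ acts simply transitively on the fibre of possible values of $c$. This is the heart of the matter and is where I expect the bulk of the work to sit — one must carefully expand $R^{-1}dR$ for the given block form of $R$, identify the $M^3_1$ and $M^3_2$ components of $R^{-1}\phi R + R^{-1}dR$, wedge against the transformed $\phi^1_0,\phi^2_0$, and collect the coefficient of $\phi^1_0\wedge\phi^2_0$; the Maurer--Cartan equations \eqref{MCeq} may be needed to simplify cross terms. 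Granting this, existence is immediate: given any first order frame $F$ with invariant $c$, choosing $y=-c$ pointwise (and $r=s=0$, $x=0$) produces a new first order frame $\hat F = FR$ with $\hat c = 0$, i.e. satisfying \eqref{2.1}; by the implicit/smoothness of this choice, $\hat F$ is again a smooth first order frame field, so second order frame fields exist near every point.

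For the uniqueness statement, suppose $F$ and $\hat F=FR$ are both second order, $R:U\to H_1$. Then $c=\hat c=0$, and the transformation law $\tilde c = \lambda(r,s)(c+y)$ forces $y\equiv 0$, i.e. $R$ takes values in ${H}_2=\{R(r,s,x,y)\in H_1\mid y=0\}$; conversely, the same law shows every such $R$ preserves \eqref{2.1}. One then checks that ${H}_2$ is a closed subgroup of $H_1$ (clear from the defining equation $y=0$, which cuts out a closed submanifold that is visibly closed under the group law — this is a short direct verification on the block matrices, using that the product of two elements of $H_1$ has its "$y$"-entry equal to the sum of the two "$y$"-entries up to an $e^r$-type factor). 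Finally, the collection ${\mathcal F}_2(f)$ of second order frames is, by construction, the subset of ${\mathcal F}_1(f)$ cut out by the $\A$-equivariant (indeed $H_1$-equivariant) condition $c=0$; since $H_2$ acts freely on it (restriction of the free $H_1$-action) and the quotient is $X$, standard principal bundle reduction theory gives that $\pi_2:{\mathcal F}_2(f)\to X$ is a reduction of $\pi_1$ (and hence of $\pi_0$) with structure group ${H}_2$, and its local sections are precisely the second order frame fields. The only genuine obstacle is the bookkeeping in the transformation formula for $c$; everything else is formal bundle theory.
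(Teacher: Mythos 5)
Your proposal is the standard moving-frames reduction and it is correct in outline; note that the paper itself does not prove this proposition but defers to \cite[p.~99]{THE}, so there is no internal proof to compare against. The one step you defer --- the transformation law for the invariant $c$ under $R(r,s,x,y)\in H_1$ --- does work out as you predict: writing $\phi^3_1=h_{11}\phi^1_0+h_{12}\phi^2_0$, $\phi^3_2=h_{12}\phi^1_0+h_{22}\phi^2_0$ (Cartan's lemma applied to $\phi^3_1\wedge\phi^1_0+\phi^3_2\wedge\phi^2_0=0$, obtained by differentiating $\phi^3_0=0$), the second-order condition is $c:=h_{11}-h_{22}=0$, and the paper's own Remark on the shape operator records precisely $A_{\tilde F}=e^{-r}A_F-y\,\mathrm{Id}$, whence $\tilde c=e^{-r}c\mp 2y$: affine in $y$ with nonvanishing coefficient, exactly the form you need. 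In particular the $x$-part of $H_1$ genuinely drops out (it only enters $\tilde\phi^3_j$ through terms proportional to $\phi^3_0=0$), which justifies your claim that only $(r,s,y)$ matter. Existence then follows by solving $y=\pm e^{-r}c/2$, the isotropy of the condition $c=0$ is $\{y=0\}=H_2$, closure of $H_2$ under the group law follows from the composition rule $y''=y'+ye^{-r'}$ on the $(3,4)$-entries, and the bundle-reduction statement is formal as you say. So the proposal is sound; to make it a complete proof one would only need to write out the two-line computation of $\tilde\phi^3_1\wedge\tilde\phi^2_0+\tilde\phi^3_2\wedge\tilde\phi^1_0$ confirming the affine law, rather than asserting its expected shape.
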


\begin{defn}
Since the map $ {\mathcal F}_2(f)\ni (x,F) \mapsto F_3\in \SS$ is constant along the fibers of $\pi_2: {\mathcal F}_2(f)\to X$,
there exists a unique smooth map $\NN:X\to \SS$, called the \textit{conformal Gauss map} of $f$, such that $\NN\circ \pi_2=F_3$.
\end{defn}

\begin{remark}
The AdS wall $\partial  \mathcal{A}_{\NN|_{x}}$ is characterized by the property of having second order 
analytic contact with $f$ at $f(x)$ 
and is called the \textit{central torus} of the surface at $f(x)$ (see Figure \ref{FIG2}). The order of contact is strictly bigger 
than two if and only if $x$ is an \textit{umbilical point} of $f$ 
(i.e., if {$f(x)\wedge d\NN|_{x} = 0$}).
\end{remark}

\begin{figure}[ht]
\begin{center}
\includegraphics[height=6cm,width=6cm]{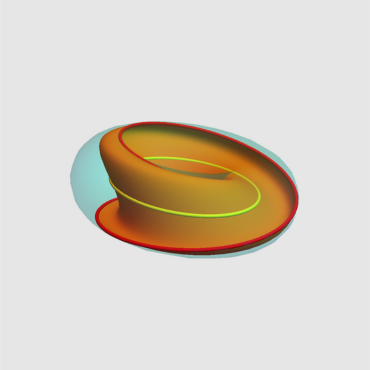}
\includegraphics[height=6cm,width=6cm]{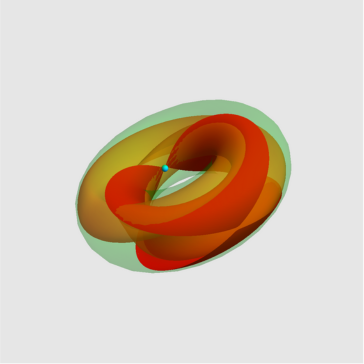}
\caption{\small On the left: the M\"obius strip bounded by the red curve, the intersection of the surface 
with the AdS wall. On the right: the portion of a central torus of the surface (at the point colored in cyan)
contained in the positive AdS chamber.}\label{FIG2}
\end{center}
\end{figure}

\begin{remark}\label{rr.s2.1}
On ${\mathcal F}_2(f)$, 
the 1-forms $\phi^3_0$ and $\phi^3_1\wedge \phi^2_0+\phi^3_2\wedge \phi^1_0$
are identically zero, and $\phi^1_0\wedge \phi^2_0 >0$.
Differentiating $\phi^3_0=0$, by \eqref{MCeq}, we compute
$\phi^3_1\wedge \phi^1_0+ \phi^3_2\wedge \phi^2_0=0$.  It follows, by Cartan's Lemma, that there exist 
smooth functions $h_{11}$, $h_{12}$, $h_{22}$ on ${\mathcal F}_2(f)$,
such that
\begin{equation}\label{2.2}
 \phi^3_1=h_{11}\phi^1_0+h_{12}\phi^2_0,\quad  \phi^3_2=h_{12}\phi^1_0+h_{22}\phi^2_0.
   \end{equation}
Because $\phi^3_1\wedge \phi^2_0+\phi^3_2\wedge \phi^1_0=0$, by \eqref{2.2},
it then follows that $h_{11} = h_{22}$.
Next, on ${\mathcal F}_2(f)$, we have $\phi^4_3=\phi^3_0 =0$, so we compute
\begin{equation}\label{2.2.bis}
 d{F}_3= \phi^0_3   {F}_0 + \phi^3_1  {F}_1 - \phi^3_2  {F}_2.
 \end{equation}
Because $h_{11} = h_{22}$ on ${\mathcal F}_2(f)$, it follows from \eqref{2.2.bis} that
\begin{equation}\label{2.2.tris}
  \langle d{F}_3,d{F}_3\rangle = (h_{11}^2-h_{12}^2)\left( -(\phi^1_0)^2 + (\phi^2_0)^2\right).
     \end{equation}
{Since $\mathcal{N}_f=F_3$ on ${\mathcal F}_2(f)$, we see that 
$\langle d\mathcal{N}_f, d\mathcal{N}_f\rangle$ is proportional to $ -(\phi^1_0)^2 + (\phi^2_0)^2$, 
which defines the induced
conformal structure on $X$.} This shows that the conformal Gauss map $\mathcal{N}_f: X \to \SS$ is \textit{weakly conformal}.  
\end{remark}

\begin{remark}
If $F: U \subset X \to \A$ is a first order frame field along $f$, the shape operator of $f$ with respect 
to $F$ can be defined at every point of $U$ by
the self-adjoint endomorphism of the tangent space given by
${A}_F= -\phi^3_1 F_1 + \phi^3_2 F_2$. The shape operator depends on the choice of $F$. Indeed,
 if $\tilde{F}$ and $F$ are first order frames on $U$ and $\tilde{F} = F R$, for $R = R(r,s,x,y) : U \to H_1$, it follows 
 from \eqref{frame-change} that ${A}_{\tilde{F}} = e^{-r} {A}_{{F}} - y \,\mathrm{Id}$. 
It is not difficult to verify that a first order frame field is of second order if $\tr \, {A}_{{F}}=0$. In particular, observe that 
the conformal factor in \eqref{2.2.tris} amounts to {the negative of} $\det\, {A}_{{F}}$, the determinant
of the trace-free part of the shape operator of $f$.
 \end{remark}

\begin{defn}
If $f:X\to \EE$ is a timelike immersion, then
$$\langle d\NN, d\NN\rangle  =\varrho_f f^*(g_{\mathcal E}),$$ 
where $\varrho_f$ is a smooth real-valued function. 
A point ${x}\in X$ is called
\begin{itemize}
\item  a 2-\textit{elliptic point} if $\varrho_f({ x})<0$;
\item  a 2-\textit{hyperbolic point} if $\varrho_f({ x})>0$;
\item a \textit{quasi-umbilical point} (or a 2-\textit{parabolic point}) if $\varrho_f({ x})=0$, with 
$f({x})\wedge d\NN|_{{ x}}\neq 0$;
\item an \textit{umbilical point} if 
$f({x})\wedge d\NN|_{{ x}}= 0$.  
If $x$ is umbilical,
$\varrho_f({ x})=0$. 
\end{itemize}
If all points of $X$ are of a fixed type, i.e., if $f$ is of \textit{constant type}, $f$ is 
said to be, respectively,  \textit{elliptic, hyperbolic, quasi-umbilical}, and \textit{totally umbilical}.
\end{defn} 

\begin{remark}
It is easily seen that the (trace-free part of the) shape operator of $f$ at ${x}$ is non-diagonalizable over $\mathbb{C}$ if and 
only if ${x}$ is quasi-umbilical.  
A timelike immersion $f$ is totally umbilical if and only if $f(X)$ is an open set of an AdS wall. 
With respect to a second order frame field $F$, the 2nd fundamental form of $f$ is locally given by 
$-\langle dF_0, d\mathcal{N}_f\rangle$.
\end{remark}

\begin{ex}
Let $({P}_0,\dots ,{P}_4)$ be the standard pseudo-orthogonal basis of 
$\MM$ and  $\gamma : \R\to \EE$ be the curve defined by
$$
     \gamma(s)={P}_0+\frac{1}{2}((1+\cos(2s)){P}_2+\sin(2s){P}_3+2\sin(s){P}_4).
         $$
Consider the 1-parameter group of isometries of $ \EE$ given by
{\[
   \begin{split}
     Q(t) &=\cos(2t)(E^0_0+E^1_1)-\sin(2t)(E^1_0-E^0_1)+E^2_2+\\
  &\quad +\cos(t)(E^3_3+E^4_4)+\sin(t)(E^4_3-E^3_4).
     \end{split}
      \]}
Then the map 
$$
 \hat{f}: {\mathbb T}^2=\R^2/2\pi \Z^2\ni [(s,t)] \mapsto Q(t)\cdot \gamma(s)\in \EE
  $$
is a timelike immersion trapped in the closure of the AdS chamber ${\mathcal A}_{{P}_3}$
(see Figure \ref{FIG2}). In addition, $\hat{f}$ is invariant under the involutory orientation-reversing diffeomorphism 
$\psi : (s,t)\in {\mathbb T}^2\to (-s,t+\pi)\in  {\mathbb T}^2$. Hence,  $\hat{f}$ descends to a generically 1:1 
immersion of the Klein bottle  ${\mathbb B}^2={\mathbb T}^2/\{\mathrm{Id},\psi\}$ (see Figure \ref{FIG1}).  
The immersion $\hat{f}$ has no umbilical points and there are two closed curves of quasi-umbilical points that 
disconnect ${\mathbb B}^2$ into two open sets consisting respectively of 2-elliptic and 2-hyperbolic points 
(see Figure \ref{FIG3}). {This construction works for every spherical lemniscate which is invariant under a
reflection with respect to a great circle of the 2-sphere.}
 \end{ex}

\begin{figure}[ht]
\begin{center}
\includegraphics[height=6cm,width=6cm]{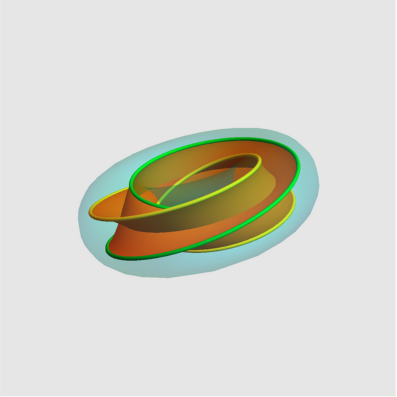}
\includegraphics[height=6cm,width=6cm]{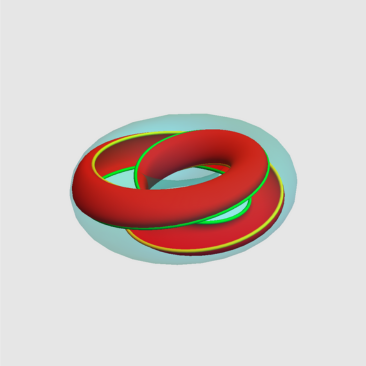}
\caption{\small On the left: the hyperbolic region bounded by the two quasi-umbilical curves. 
On the right: the elliptic  region  bounded by the two quasi-umbilical curves.}\label{FIG3}
\end{center}
\end{figure}

\subsection{Isothermic maps}\label{s2.1.bis} 
Viewing the Einstein universe $\EE$ as a
symmetric $R$-space, i.e., as the orbit $\mathcal{O}$ of the height 1 parabolic subalgebra
$\mathfrak{h}_0$ of $\aa$ (cf. Section \ref{models-of-EU}), we now discuss isothermic 
maps in $\EE$.
According to \cite{BDPP},
the notion of isothermic map is strictly related to the structure of symmetric $R$-space of the target space.
%
Let $f : X \to \EE\cong \mathcal{O}$ be a timelike immersion. Consider the vector bundle 
$$
   \mathfrak{H}^{\perp}_f = \left\{ (x, \mathbf{h})\in X\times \aa \mid \mathbf{h} \in \mathfrak{h}^{\perp}_{x}\right\},
   $$
where, for each $x\in X$, ${\mathfrak{h}}_{x}$ denotes the 
Lie algebra of the stabilizer of {$f({x})$}.

\begin{defn}
A timelike immersion $f : X \to \EE$ is \textit{isothermic} if there is a nonzero 
1-form 
$\delta \in \Omega^1(X)\otimes \mathfrak{H}^{\perp}_f$
which is closed as an element of $\Omega^1(X)\otimes \aa$.
Thus $\delta$ takes values in
$\mathfrak{H}^{\perp}_f$ and is closed when viewed as an $\aa$-valued form.
%
The 1-form $\delta$ is called an \textit{infinitesimal 
deformation} of $f$.
\end{defn}

\begin{remark}\label{r.s2.1}
The concept of isothermic map originates from the deformation problem of submanifolds in homogeneous spaces 
(cf. \cite{BHJPR,Ca2, JeLN, JeDg, Mu, MNTo,MNSIGMA,Pe}). 
Two maps $f:{M} \to G/H$ and $\hat{f}:\hat{{M}} \to G/H$ 
are said to be $k$\textit{th order deformations of each other} (with unfixed parameters) if there exist a diffeomorphism 
$\Phi:{M}\to \hat{{M}}$, the {\it change of parameters}, and a nonconstant map $\Delta: {M}\to G$, the \textit{deformation}, 
such that $f$ and $\Delta(p) \cdot\hat{f}\circ \Phi$ have $k$th order contact at $p$, for every $p\in M$. 
In addition, if $M=\hat{{M}} $ and $\Phi=\mathrm{Id}_{{M}}$, then $f$ and  $\hat{f}$ are said to be {\it $k$th order deformations of
each other} (with fixed parameters). If $G/H$ is a symmetric $R$-space and if $f:{M}\to G/H$ is isothermic, 
then $f$ admits second order deformations \cite{BDPP}. 
In the case under discussion, the situation is the following. Let $f:X\to \EE$ be an isothermic timelike 
immersion, $p:\tilde{X}\to X$ be a simply connected
covering of $X$, and $\delta$ be an infinitesimal deformation of $f$. Since {$\mathfrak{h}_x^{\perp}$} 
is abelian, $\delta$ satisfies the Maurer--Cartan equation. So, there exists a smooth map 
$\Delta : \tilde{X}\to \A$, such that $\Delta^{-1}d\Delta= p^*\delta$. 
Then, 
$$
  \hat{f} :  \tilde{X} \ni \tilde{x}\longmapsto \Delta(\tilde{x})\cdot f|_{p(\tilde{x})}\in \EE
    $$ 
is a second order deformation (with fixed parameters) of $f\circ p$. 
The deformability of $f$ depends on the existence of an infinitesimal deformation originating a map 
$\Delta: \tilde{X}\to \A$ which is invariant under the group of deck transformations of the covering. 
\end{remark}

\subsection{General properties of quasi-umbilical immersions}\label{s2.2}

\begin{lemma}
Let  $f:X\to \EE$ be a quasi-umbilical immersion and let $g$ denote the induced Lorentzian metric 
$f^*(g_{{\mathcal E}})$. 
Then, the conformal Gauss map $\NN$ has rank 1. In addition, 
$\mathrm{Im}\,d{\mathcal N}_{f}$ is a null line bundle such that 
$\mathrm{Im}\,d{\mathcal N}_{f}$ $=$ $\mathrm{Ker}\,d{\mathcal N}_{f}$.\footnote{Notice that $d{\mathcal N}_{f}|_{{ x}}$ is a 
2-step nilpotent self-adjoint endomorphism 
of ${T}_x(X)$, $\forall \, {x}\in X$.}
\end{lemma}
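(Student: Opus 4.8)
The plan is to do the computation on the second-order frame bundle $\pi_2\colon\mathcal F_2(f)\to X$, on which $\NN\circ\pi_2=F_3$, using the structure equations \eqref{MCeq} and the hypothesis of quasi-umbilicality. By Remark~\ref{rr.s2.1}, on $\mathcal F_2(f)$ one has $\phi^3_0=0$, $h_{11}=h_{22}$, the identities \eqref{2.2}, and $dF_3$ given by \eqref{2.2.bis}. From \eqref{2.2.bis} one reads off $f\wedge d\NN=\phi^3_1\,F_0\wedge F_1-\phi^3_2\,F_0\wedge F_2$, while \eqref{2.2.tris} computes $\langle d\NN,d\NN\rangle$; hence $f$ is quasi-umbilical precisely when $h_{11}^2=h_{12}^2$ and $(h_{11},h_{12})\neq(0,0)$ on $\mathcal F_2(f)$. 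Writing $h:=h_{11}=h_{22}$, this says $h$ is nowhere zero and $h_{12}=\varepsilon h$ for some $\varepsilon$ valued in $\{+1,-1\}$, which is locally constant, hence constant on the connected $X$. Setting $\beta:=\phi^1_0+\varepsilon\phi^2_0$, a nowhere-vanishing semibasic $1$-form, \eqref{2.2} yields $\phi^3_1=h\,\beta$ and $\phi^3_2=\varepsilon h\,\beta$, so that \eqref{2.2.bis} becomes
\[
d\NN=dF_3=\phi^0_3\,F_0+h\,\beta\,(F_1-\varepsilon F_2).
\]

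The key step is to show that $\phi^0_3$ is proportional to $\beta$. Since $\phi^3_2-\varepsilon\phi^3_1=0$ holds identically on $\mathcal F_2(f)$, I would differentiate it. Using \eqref{MCeq}, the relations among the components of $\varphi$ listed in Section~\ref{s1} (in particular $\varphi^4_1=-\varphi^1_0$, $\varphi^4_2=\varphi^2_0$, $\varphi^1_2=\varphi^2_1$, $\varphi^3_4=\varphi^0_3$), and the vanishing of $\phi^3_0,\phi^1_1,\phi^2_2,\phi^3_3$ on $\mathcal F_2(f)$, one finds
\[
d\phi^3_1=-\varepsilon h\,\beta\wedge\phi^2_1+\phi^0_3\wedge\phi^1_0,\qquad
d\phi^3_2=-h\,\beta\wedge\phi^2_1-\phi^0_3\wedge\phi^2_0,
\]
so that, using $\varepsilon^2=1$ and $\phi^2_0+\varepsilon\phi^1_0=\varepsilon\beta$,
\[
0=d\!\left(\phi^3_2-\varepsilon\phi^3_1\right)=-\phi^0_3\wedge(\phi^2_0+\varepsilon\phi^1_0)=-\varepsilon\,\phi^0_3\wedge\beta .
\]
Hence $\phi^0_3\wedge\beta=0$, i.e.\ $\phi^0_3=\lambda\,\beta$ for a smooth function $\lambda$ on $\mathcal F_2(f)$. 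I expect this exterior-derivative bookkeeping to be the main obstacle: one has to keep track of which components of $\phi$ vanish on $\mathcal F_2(f)$ and of the signs in the identifications $\varphi^i_j=\pm\varphi^k_l$, after which the $\beta\wedge\phi^2_1$ terms cancel and only $\phi^0_3\wedge\beta$ survives.

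Granting $\phi^0_3=\lambda\beta$, the displayed formula for $d\NN$ becomes
\[
d\NN=\beta\,\bigl(\lambda F_0+h(F_1-\varepsilon F_2)\bigr)=:\beta\,N ,
\]
where $\beta$ and $N$ are nowhere zero (the $F_1$-component of $N$ equals $h\neq0$); hence $\NN$ has rank $1$ everywhere and $\mathrm{Im}\,d\NN$ is the line bundle spanned by $N$. In a lightcone basis $\langle F_0,F_0\rangle=\langle F_0,F_1\rangle=\langle F_0,F_2\rangle=\langle F_1,F_2\rangle=0$ and $\langle F_1,F_1\rangle=-1=-\langle F_2,F_2\rangle$, so $\langle N,F_0\rangle=0$ and $\langle N,N\rangle=h^2\langle F_1-\varepsilon F_2,F_1-\varepsilon F_2\rangle=h^2(-1+\varepsilon^2)=0$; thus $\mathrm{Im}\,d\NN$ is a null line bundle and $N\in F_0^{\perp}$. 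On the other hand $\ker d\NN=\ker\beta$ is a null line field for $f^{*}(g_{\mathcal E})\propto-(\phi^1_0)^2+(\phi^2_0)^2$ (a vector $v$ with $\phi^1_0(v):\phi^2_0(v)=1:(-\varepsilon)$ has $-\phi^1_0(v)^2+\phi^2_0(v)^2=0$). Finally, reducing modulo the point $f=[F_0]$, i.e.\ passing to $T_{f(x)}\EE=F_0^{\perp}/\R F_0\supset df|_x(T_xX)$, the image $\R N$ becomes the class of $\R(F_1-\varepsilon F_2)$, which is exactly $df|_x(\ker d\NN)$; this quotient is the identification of the footnote, under which $d\NN|_x$ is the endomorphism $v\mapsto\phi^3_1(v)F_1-\phi^3_2(v)F_2$ of $T_xX$, visibly $2$-step nilpotent. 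Therefore $\mathrm{Im}\,d\NN=\ker d\NN$, completing the argument.
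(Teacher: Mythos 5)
Your proposal is correct, and its core is the same second-order-frame computation as the paper's: both read off $dF_3=\phi^0_3F_0+\phi^3_1F_1-\phi^3_2F_2$ from \eqref{2.2.bis}, use $h_{11}=h_{22}$ and the quasi-umbilical condition $h_{12}=\varepsilon h_{11}$ with $h_{11}$ nowhere zero to get the factorization along $\beta=\phi^1_0+\varepsilon\phi^2_0$, and identify $\ker\beta$ as the null direction of $g$. The one genuine difference is how the $\phi^0_3F_0$ term is handled. The paper simply identifies $T(X)$ with the quotient bundle $\span\{F_0,F_1,F_2\}/\span\{F_0\}$, so that term drops out and $d\NN$ is treated from the start as the ($2$-step nilpotent) endomorphism of $T(X)$ mentioned in the footnote; rank, image and kernel are then read off in one line. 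You instead differentiate the identity $\phi^3_2-\varepsilon\phi^3_1=0$ with the Maurer--Cartan equations to conclude $\phi^0_3\wedge\beta=0$, hence $\phi^0_3=\lambda\beta$. Your computation checks out (the $\beta\wedge\phi^2_1$ terms do cancel, and the sign bookkeeping with $\varphi^4_1=-\varphi^1_0$, $\varphi^4_2=\varphi^2_0$, $\varphi^3_4=\varphi^0_3$ is right), and it buys something the paper's shorter route does not make explicit: that $d\NN$ is honestly of rank $1$ as a map into $T\SS\subset\MM$, not merely after projecting mod $F_0$. This stronger fact is what the paper implicitly relies on later (the factorization $\NN=\Gamma\circ\pi$ through a directrix curve, and the existence of adapted frames with $\phi^3_4=0$, which is only possible because the transverse part of $\phi^0_3$ vanishes identically). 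So your extra step is not wasted effort; it is the cleanest justification of those later claims. The only cosmetic caveat is that the constancy of $\varepsilon$ on all of $X$ needs the (easy) observation that the sign $h_{12}/h_{11}$ is unchanged under the $H_2$-action; for the purely local statement of the lemma this is immaterial.
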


\begin{proof}
Since the property is local in nature, we may assume the existence of a global cross section $F:X\to {\mathcal F}_2(f)$,
such that $[{F}_0]=f$. Then, from \eqref{2.2},  \eqref{2.2.bis} and  \eqref{2.2.tris}, we compute
$$
   d {F}_0= {\phi^0_0 F_0} +\phi^1_0 { F}_1+\phi_0^2{ F}_2,\quad 
    d\NN=h(\phi^1_0+\eta \phi^2_0)({ F}_1-\eta { F}_2)+\phi_4^3{ F}_0,
     $$
where $h$ is a nowhere vanishing smooth function and $\eta = \pm 1$.  
Let $(\partial_1,\partial _2)$ denote the trivialization of ${T}(X)$ dual to $(\phi^1_0,\phi^2_0)$. Then, 
identifying  $T(X)$ with the quotient bundle $\span\{ {F}_0,{ F}_1,{ F}_2\}/\span\{{F}_0\}$, we obtain
$$
    d{\mathcal N}_{f}=h(\phi^1_0+\eta \phi^2_0)(\partial_1-\eta \partial_2).
     $$
This implies that $\mathrm{Im}\,d{\mathcal N}_{f}= \mathrm{Ker}\,d{\mathcal N}_{f}=\span\{\partial_1-\eta \partial_2\}$. 
From the expression of $g=-(\phi^1_0)^2+(\phi^2_0)^2$, it follows that $\mathrm{Im}\,d{\mathcal N}_{f}$ is a null line subbundle 
of ${T}(X)$. This concludes the proof.
\end{proof}

\begin{defn}
On $X$ there is a unique \textit{canonical orientation} such that $(V,V_*)$ is a positive basis of ${T}_x(X)$, for every ${x}\in X$, 
for every nonzero element $V$ of $\mathrm{Im}({d{\mathcal N}_{f}}|_{{x}})$, and for every null vector $V_*\in {T}_x(X)$, such that 
$g(V,V_*)=-1$. From now on, we consider on $X$ the canonical orientation induced by $f$. By construction, the map 
$$
    \Psi :  \ima d{\mathcal N}_{f}\setminus\{0\}\ni \mathbf{V}  \longmapsto    -g(d{\mathcal N}_{f}(\mathbf{V}_*),\mathbf{V}_*)\in \R
       $$
is well defined and is either strictly positive or strictly negative. The sign of $\Psi$, denoted by $\varepsilon_f$,  is called the \textit{helicity} 
of $f$.
\end{defn}

\begin{remark}
If $X$ is equipped with the canonical orientation induced by $f$ and if $F$ is a 
second order frame field along $f$, then $h_{11}=h_{12}$ and $h:=h_{11}$ is nowhere vanishing. 
The sign of $h$ is equal to the helicity of $f$. Replacing $f$ by $-f$, the helicity changes sign.
\end{remark}

\begin{defn}
Let $f:X\to \EE$ be a quasi-umbilical immersion. A second order frame field along $f$ is said 
to be \textit{adapted} if
\begin{equation}\label{2.1ada} 
   h = \varepsilon_f,\quad {\phi^3_4} 
   =\phi^0_0-2\phi^2_1=0,\quad (\phi^1_4+\phi^2_4)\wedge (\phi^1_0+\phi^2_0)=0.
   \end{equation}
\end{defn}

The following result can be found differently formulated in \cite[p. 103]{THE}.

\begin{prop}
Adapted frame fields exist near any point of $X$. In addition, if $F, \hat{F} : U\to \A$ are {adapted}
frame fields, then
$\hat{F} = F\,R$, where $R: U \to H_*$ is a smooth map into the abelian 1-dimensional closed subgroup 
 $$
     {H}_*=\{R(s) = R(r,s,x,0) \in {H}_2 \mid  r=2s, \, x=0\}.
         $$
The adapted frame fields along $f$ are the local sections of a reduced subbundle
${\mathcal F}_*(f)$ of ${\mathcal F}_2(f)$, with 
structure group ${ H}_*$.
\end{prop}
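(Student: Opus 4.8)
The plan is to run the method of moving frames one step further: starting from a second order frame field $F:U\to\mathcal{F}_2(f)$ along the quasi-umbilical immersion $f$, I would show that the three normalizations in \eqref{2.1ada} can be successively achieved by a suitable gauge transformation $R:U\to H_2$, and that the residual freedom is exactly $H_*$. Recall that on $\mathcal{F}_2(f)$ any change of second order frame is $\hat F = F\,R$ with $R = R(r,s,x,0)\in H_2$, and the Maurer--Cartan form transforms by \eqref{frame-change}, $\hat\phi = R^{-1}\phi R + R^{-1}dR$. The first normalization $h=\varepsilon_f$ is a pure scaling: since $h_{11}=h_{12}=:h$ is nowhere vanishing on $U$ (after fixing the canonical orientation) and has the sign $\varepsilon_f$, and since the parameter $r$ rescales the second fundamental form (the relation $A_{\tilde F}=e^{-r}A_F - y\,\mathrm{Id}$ with $y=0$ shows $h$ transforms by $h\mapsto e^{-r}h$ up to the relevant power), we can solve uniquely for $r$ to force $|h|=1$, i.e.\ $h=\varepsilon_f$. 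This kills the $r$-freedom.

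Next I would use the $s$-parameter (the boost $B=\left(\begin{smallmatrix}\cosh s&\sinh s\\ \sinh s&\cosh s\end{smallmatrix}\right)$), which rotates the null frame $(F_1+F_2,F_1-F_2)$ by scaling each null vector by $e^{\pm s}$, to normalize the diagonal condition $\phi^0_0 - 2\phi^2_1 = 0$. Concretely, writing out the relevant entries of \eqref{frame-change} shows that $\phi^0_0 - 2\phi^2_1$ shifts by a term of the form $ds$ plus a bounded $s$-dependent correction along the null direction $\phi^1_0+\phi^2_0$; the condition that the $r$-normalization already performed be preserved pins down which combination must vanish, and one checks that the combined quantity $\phi^0_0 - 2\phi^2_1$ can be set to zero by an appropriate choice of $s$, consuming the $s$-freedom up to a constant. (It is at this stage that one also observes $\phi^3_4 = 0$ on $\mathcal{F}_2(f)$ already, or becomes zero after this reduction, since $\phi^4_3 = \phi^3_0 = 0$ there and the symmetry relations force $\phi^3_4$ into the normalized slot — this is the content of the first equation in \eqref{2.1ada}.) Finally, the $x\in\mathbb{R}^2$ freedom is used to arrange $(\phi^1_4+\phi^2_4)\wedge(\phi^1_0+\phi^2_0)=0$, i.e.\ to make $\phi^1_4+\phi^2_4$ proportional to the null 1-form $\phi^1_0+\phi^2_0$: translating by $x$ shifts $(\phi^1_4,\phi^2_4)$ by a linear combination of $(\phi^1_0,\phi^2_0)$ with coefficients linear in $x$, so a $2$-dimensional family of translations can annihilate the one offending component (the component of $\phi^1_4+\phi^2_4$ along $\phi^1_0-\phi^2_0$), while the other component is free.

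Having shown existence, I would then identify the residual structure group by bookkeeping: after the three reductions, the admissible gauge transformations $R=R(r,s,x,y)\in H_2$ (so $y=0$ already) that preserve all of \eqref{2.1ada} must satisfy $r=2s$ (from compatibility of the $h=\varepsilon_f$ normalization with the $\phi^0_0-2\phi^2_1=0$ normalization — the scaling weight of $h$ is matched against the boost weight, giving the linear constraint $r=2s$) and $x=0$ (since any nonzero translation reintroduces a $\phi^1_0-\phi^2_0$ component into $\phi^1_4+\phi^2_4$). This is precisely the description of $H_*=\{R(s)=R(2s,s,0,0)\}$, which is clearly a closed abelian $1$-dimensional subgroup. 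The claim about $\mathcal{F}_*(f)$ being a reduced $H_*$-subbundle of $\mathcal{F}_2(f)$ is then a formal consequence, by the standard principal-bundle reduction argument once the existence of local adapted sections and the transitive-simple action of $H_*$ on the set of adapted frames over a point are established.

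The main obstacle I anticipate is the second step: verifying that the boost parameter $s$ genuinely suffices to normalize $\phi^0_0-2\phi^2_1$ requires computing the precise transformation law of this particular combination under $R(r,s,x,0)$ and checking that the $ds$ term appears with a nonvanishing coefficient (so the resulting ODE in $s$ is solvable) and that no obstruction involving the already-fixed $r$ arises; this is where the specific coefficient $2$ in $\phi^0_0-2\phi^2_1$ and the constraint $r=2s$ come from, and getting these exactly right is the delicate bit. Since the result is attributed (in different notation) to The \cite[p.\ 103]{THE}, I would cross-check the normalization conventions against that reference to make sure the reduction is the intended one.
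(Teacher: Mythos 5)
The paper itself offers no proof of this proposition (it defers to \cite[p.~103]{THE}), so your plan can only be judged on its own merits: the overall strategy --- successive reduction of second order frames via \eqref{frame-change} together with the Maurer--Cartan equations, then identification of the residual isotropy --- is the right one, but the parameter bookkeeping, which is the entire content of the statement, does not work as you describe. First, the transformation law of $h$ under $R(r,s,x,0)$ is $\tilde h=e^{2s-r}h$ (the boost rescales both $F_1-F_2$ and $\phi^1_0+\phi^2_0$, each contributing a factor $e^{s}$ against the $e^{-r}$ from $F_0$), so normalizing $h=\varepsilon_f$ fixes only the combination $r-2s$ and does not ``kill the $r$-freedom'' leaving $s$ available. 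Worse, under $R(r,s,0,0)$ one has $\tilde\phi^0_0-2\tilde\phi^2_1=\phi^0_0-2\phi^2_1+d(r-2s)$, so once $h$ is normalized this combination is frozen: the boost parameter $s$ cannot be used to achieve $\phi^0_0-2\phi^2_1=0$, which is exactly the step you yourself flag as delicate. That normalization must instead come from the translations $x$, after the structure equations (differentiating $\phi^3_1+\phi^3_2=2h(\phi^1_0+\phi^2_0)$ with $h$ constant and $\phi^3_4=0$) have reduced $\phi^0_0-2\phi^2_1$ to a multiple of $\phi^1_0+\phi^2_0$.

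Second, the parenthetical claim that $\phi^3_4=0$ already holds on ${\mathcal F}_2(f)$ is false: the symmetry relations give $\varphi^3_4=\varphi^0_3$, which is unrelated to $\varphi^4_3=\varphi^3_0=0$, and the paper's own rank lemma exhibits $d\NN=h(\phi^1_0+\eta\phi^2_0)(F_1-\eta F_2)+\phi^3_4F_0$ with $\phi^3_4$ generically nonzero. Killing $\phi^3_4$ is a genuine normalization consuming one combination of the two translation parameters (it is what makes $[F_4]$ the second envelope), and differentiating $\phi^3_4=0$ then yields $(\phi^1_4+\phi^2_4)\wedge(\phi^1_0+\phi^2_0)=0$ automatically --- it does not need, and does not get, a group parameter of its own. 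Consequently your final tally, in which one component of $x$ remains ``free,'' would leave a two-dimensional residual group, contradicting the very statement $H_*=\{r=2s,\ x=0\}$ you are proving. The correct count is: one condition on $r-2s$ from $h=\varepsilon_f$ (whence $r=2s$ in the stabilizer), and both components of $x$ consumed by $\phi^3_4=0$ and by the residual scalar in $\phi^0_0-2\phi^2_1$. You assert the right answer for $H_*$, but the reductions as you have assigned them would not produce it.
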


\begin{defn}
The map $ {\mathcal F}_*(f) \ni (x,F)\mapsto [F_4]\in \EE$ is constant along the fibers. It descends to a 
map $f^{\sharp}:X\to \EE$, 
called the \textit{dual} 
of $f$. 
\end{defn}

\begin{remark}
The dual map is the second envelope of the 1-parameter family of central tori of $f$. That is, 
$$
 f^{\sharp}(x)\in \partial {\mathcal A}|_{x},\quad 
df^{\sharp}|_{x}({T}_{x}X)\subset {T}_{f^{\sharp}(x)}(\partial {\mathcal A}|_{x}),\quad \forall\, x\in X,
     $$ 
where $\partial {\mathcal A}|_{x}$ is the AdS wall of $\NN (x)$.
\end{remark}

\begin{defn}
A quasi-umbilical immersion $f$ is called \textit{regular} if the {leaf} space $\Lambda$ 
of the vertical distribution $\mathrm{Ker}\,d\mathcal{N}_f$
 is Hausdorff with respect to the quotient topology. If $f$ is regular, $\Lambda$ is a connected 
 1-dimensional manifold and the natural projection $\pi : X\to \Lambda$ is a smooth submersion with 
 connected fibers. 
 Then, $\NN=\Gamma\circ \pi$, where $\Gamma:\Lambda \to \SS$ is a null immersed curve, called the \textit{directrix curve} 
 of $f$. Note that locally every quasi-umbilical immersion is regular.
\end{defn}

\begin{remark}
As we will show below, a regular quasi-umbilical immersion can be reconstructed from its directrix curve. 
This explain the key role played by null curves in the neutral space form $\SS$. 
By the identification of $\SS$ with $\mathbb{V}_-^2\oplus {S}^2_+$ described in 
\eqref{S22}, a null curve of $\SS$  has a parametrization of the form 
$\Gamma = \alpha+\sqrt{1-\langle \alpha,\alpha\rangle}\beta$, 
where $\alpha:\R \to \mathbb{V}_-$ is an immersed plane curve parametrized by arclength  and $\beta:\R \to  {S}^2_+$ is a 
spherical curve with speed
$$
\upsilon_{\beta}= \frac{\sqrt{1-\langle \alpha,\alpha \rangle \sin^2(\theta)}}{1-\langle \alpha,\alpha\rangle},
    $$
where $\theta=\widehat{\alpha \alpha'}$. Thus, quasi-umbilical surfaces depend on two functions in one variable,  
the geodesic curvatures of the curves $\alpha$ and $\beta$.
\end{remark}
 
\begin{thmx}\label{thmA}
The conformal Gauss map $\NN:X\to \SS$ of a quasi-umbilical immersion $f:X\to \EE$ is harmonic.
Moreover, if $f$ is regular, then
$f$ is isothermic and its infinitesimal deformations depend on one arbitrary function in one variable.
\end{thmx}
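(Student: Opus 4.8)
The strategy is to work on the adapted frame bundle $\mathcal{F}_*(f)$, where by the remark preceding the theorem we have $h_{11}=h_{12}=:h=\varepsilon_f$ and the structure group reduces to the abelian $1$-dimensional group $H_*$. The first task is to compute the full pullback $\phi = F^*\varphi$ of the Maurer--Cartan form along an adapted frame $F$ as explicitly as possible, using the defining relations \eqref{2.1ada}, the relations \eqref{2.2} with $h_{11}=h_{12}=\varepsilon_f$, the structure equations \eqref{MCeq}, and Cartan's lemma. Set $\sigma := \phi^1_0+\phi^2_0$ (the null $1$-form spanning the conformal direction picked out by $\ima d\NN$) and $\tau := \phi^1_0 - \phi^2_0$; then $g = -\sigma\tau$ and from the Lemma $\ker d\NN = \ima d\NN$ is the line dual to $\tau$, i.e. $d\NN$ is built purely from $\sigma$. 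The condition $(\phi^1_4+\phi^2_4)\wedge\sigma = 0$ together with $\phi^3_4 = \phi^0_0 - 2\phi^2_1 = 0$ should force most of the remaining connection forms into the shape $(\text{function})\cdot\sigma + (\text{function})\cdot\tau$ with controlled coefficients; differentiating $\phi^3_4=0$ and $\phi^0_0-2\phi^2_1=0$ via \eqref{MCeq} yields further algebraic constraints. I expect the outcome to be that on $\mathcal{F}_*(f)$ all the data is governed by $\sigma$, $\tau$ and a single new invariant function (which will turn out to be the helicity-normalized analogue of a curvature of the directrix).

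**Harmonicity.** For the harmonic-map claim, recall from \eqref{2.2.bis} that $d\NN = dF_3 = \phi^0_3 F_0 + \phi^3_1 F_1 - \phi^3_2 F_2$; since $\NN$ is weakly conformal (Remark \ref{rr.s2.1}) into the pseudo-Riemannian symmetric space $\SS$, harmonicity is equivalent to the tension field vanishing, and for a weakly conformal map from a surface this can be checked frame-theoretically: one shows that the $\SS$-normal component of $\nabla d\NN$ (equivalently, of the second derivative of $F_3$ computed by differentiating \eqref{2.2.bis} once more and using \eqref{MCeq}) is trace-free with respect to the conformal class $g = -\sigma\tau$. Because everything in $d\NN$ is proportional to $\sigma$ and $g$ pairs $\sigma$ only with $\tau$, the relevant trace $g^{ij}(\nabla d\NN)_{ij}$ collapses to a single term which the adapted-frame normalizations \eqref{2.1ada} are exactly designed to kill. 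Alternatively — and this is probably the cleanest route — invoke the result of Burstall--Musso--Pember \cite{BMP-ASN} cited in the introduction, that quasi-umbilical surfaces are precisely one of the four classes of orthogonal surfaces of codimension-$2$ harmonic sphere congruences, so that $\NN$ is harmonic by construction; but I would prefer to give the direct frame computation for self-containedness.

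**Isothermic + deformations.** Assume now $f$ is regular, with directrix $\Gamma:\Lambda\to\SS$ and $\NN = \Gamma\circ\pi$. To produce an infinitesimal deformation $\delta\in\Omega^1(X)\otimes\mathfrak{H}^\perp_f$ that is closed as an $\aa$-valued form, the natural candidate is $\delta = $ (a suitable function)$\cdot\sigma \otimes \mathbf{h}$, where $\mathbf{h}$ is a section of $\mathfrak{H}^\perp_f$ — concretely, in an adapted frame, the rank-one nilpotent $F_0 \wedge F_4$-type element of $\mathfrak{h}^\perp_x$ (the polar space of the stabilizer of $f(x)$ is abelian, as emphasized in Section \ref{models-of-EU}, and is spanned by such elements). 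Writing $\delta = t\,\phi^1_0\, \mathbf{e}(F)$ for the appropriate generator $\mathbf{e}(F)$ of $\mathfrak{h}^\perp$ along $F$, I would compute $d\delta$ using $dF_j = \sum \phi^k_j F_k$ and \eqref{MCeq}; the vanishing of $d\delta$ becomes a linear first-order ODE for the coefficient $t$ along the fibres of $\pi$ plus a transport condition along $\Lambda$. Since the fibres of $\pi$ are the (connected) leaves of $\ker d\NN$ and the residual equation is an ODE in one variable over $\Lambda$, the general solution is parametrized by one arbitrary function of the single variable on $\Lambda$ — giving simultaneously that $f$ is isothermic and that the space of infinitesimal deformations is an affine space over functions in one variable. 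The main obstacle is the bookkeeping in the first step: getting the adapted Maurer--Cartan form pinned down tightly enough that both the harmonicity trace and the closedness equation $d\delta=0$ reduce to transparent one-variable statements; once the frame is fully normalized, both conclusions should fall out with only routine differentiation. One must also check that $\mathbf{h}$ genuinely lies in $\mathfrak{h}^\perp_x$ and not merely in $\aa$ — this is where the explicit description of $\mathfrak{h}_0^\perp$ from Section \ref{models-of-EU} is used, transported by the frame $F$.
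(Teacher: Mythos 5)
Your overall architecture coincides with the paper's: harmonicity is proved by computing the tension field of $\NN$ in an adapted frame and observing that the second covariant derivative $D(d\NN)$ is proportional to $\sigma\otimes\sigma$ (the paper's $\eta^1\otimes\eta^1$), whose $g$-trace vanishes because the dual metric pairs $\sigma$ only with $\tau$; and the isothermic property is obtained by exhibiting a closed $\mathfrak{H}^{\perp}_f$-valued $1$-form built from the adapted frame, with the arbitrary function living on the leaf space $\Lambda$. Up to one slip of emphasis (the trace vanishes automatically from the $\sigma\otimes\sigma$ structure, not because a residual term is ``killed'' by the normalizations), the harmonicity part is the paper's argument.

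In the isothermic part there are two concrete gaps. First, your proposed generator of $\mathfrak{h}^{\perp}_x$, a ``rank-one nilpotent $F_0\wedge F_4$-type element'', does not lie in $\mathfrak{h}^{\perp}_x$: the element $F_0\wedge F_4$ is, up to sign, $Ad_F(M^0_0)$, a semisimple grading element belonging to $\mathfrak{h}_x$ itself. The polar space is $Ad_{F(x)}\bigl(\mathrm{span}\{M^1_4,M^2_4,M^3_4\}\bigr)$, i.e.\ the nilpotents $F_0\wedge W$ with $W\perp F_0$, and the generator that actually yields a closed form is $Ad_F(M^1_4-M^2_4)=F_0\wedge(F_1-F_2)$ --- $F_0$ wedged with the null tangent direction spanning $\ker d\NN$. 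Correspondingly the $1$-form factor must be $\tilde{\ell}\,(\phi^1_0+\phi^2_0)=\tilde{\ell}\,\sigma$ (your first formula), not $t\,\phi^1_0$ (your second); and the closedness condition reduces to the single requirement that $\tilde{\ell}$ be constant on the fibres of $\pi$, with \emph{no} residual equation on $\Lambda$. Note that your description (``an ODE along the fibres plus a transport condition along $\Lambda$'') is internally inconsistent with your conclusion: a genuine transport equation on $\Lambda$ would leave only finitely many constants, not an arbitrary function. Second, to prove that the infinitesimal deformations \emph{depend on} one arbitrary function you must also show that every closed $\mathfrak{H}^{\perp}_f$-valued $1$-form has this shape; your ansatz fixes the $\mathfrak{h}^{\perp}$-direction in advance, whereas a general element of $\Omega^1(X)\otimes\mathfrak{H}^{\perp}_f$ has three independent components $\alpha_j\otimes\bigl(F_0\wedge W_j\bigr)$. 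The paper closes this by setting $\eta=Ad_{F^{-1}}\delta$ for an arbitrary deformation $\delta$ and showing that $d\eta=\eta\wedge\phi+\phi\wedge\eta$ forces $\eta$ into the form \eqref{s2.2.6}; this converse step is absent from your plan and must be added.
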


\begin{proof}
The first part of the theorem  is local in nature, so we assume the existence of a global adapted frame field 
$F:X\to \A$, such that $[{F}_0]=f$. We then have
\begin{equation}\label{s2.2.1}
\begin{cases}
g=-(\phi^1_0)^2+(\phi^2_0)^2 = f^*(g_{\mathcal{E}}), \\
\phi^3_0={\phi^3_4}=\phi^0_0-2\phi^2_1=0,\\
\phi_1^3=  {\phi_2^3} 
=\varepsilon_f(\phi^1_0+\phi^2_0),\\
\phi_4^1=r(\phi^1_0+\phi^2_0)+\alpha,\\
\phi_4^2=r(\phi^1_0+\phi^2_0)-\alpha
\end{cases}
\end{equation}
 where $r$ is a smooth function and $\alpha$ is an exterior differential 1-form. Then
\begin{equation}\label{s2.2.2}
    d\NN=\phi^3_1{ F}_1-\phi^3_2 { F}_2 = \varepsilon_f(\phi^1_0+\phi^2_0)({ F}_1-{ F}_2).
      \end{equation}
Since $X$ is 2-dimensional, the harmonicity of $\NN$ only depends on the conformal structure induced by $f$ on $X$. 
Thus, it suffices to compute the tension field of $\NN$ with respect to the Lorentzian metric $g$.  
Let $\nabla$ be
the Levi-Civita covariant derivative of $g$ and $\tilde{\nabla}$ be the covariant derivative on the pullback bundle $ \NN^*({\it T}(\SS))$ 
induced by the Levi-Civita connetion of $\SS$. Next, let $D$ be the covariant derivative on ${\it T}^*(X)\otimes \NN^*({\it T}(\SS))$ 
induced by $\nabla$ and $\tilde{\nabla}$. Let $\eta^1=\phi^1_0+\phi^2_0$ and $\eta^2=\phi^1_0-\phi^2_0$. 
Then $\phi^2_1=a\eta^1+b\eta^2$, where $a,b$ are smooth functions. We then have
$$
   \begin{cases}\tilde{\nabla}({ F}_1-{ F}_2)=\phi^2_1({ F}_1-{ F}_2)=(a\eta^1+b\eta^2)({ F}_1-{ F}_2),\\
     \nabla \eta^1=(-3 {a}\eta^1+b\eta^2)\otimes \eta^1.
      \end{cases}
        $$
This implies
$$
    D(d\NN)=-2a(\eta^1\otimes \eta^1)\otimes  ({ F}_1-{ F}_2).
       $$
Taking into account that $2g=\eta^1\otimes \eta^2$, we may conclude that the tension field ${\tr}_g(D(d\NN))$ 
is identically zero. This proves the first claim.

Next, suppose that $f$ is regular. Let $\Gamma : \Lambda \to \SS$ be the directrix curve of $f$ and $\zeta$ be 
a nowhere vanishing 1-form on $\Lambda$. We put $\tilde{\zeta}=\pi^*(\zeta)$. Then there exists a unique adapted 
frame field $F:X\to \A$ such that $\tilde{\zeta}=\pm (\phi^1_0+\phi^2_0)$. Possibly replacing $\zeta$ with $-\zeta$, 
we may assume that  $\tilde{\zeta}=\phi^1_0+\phi^2_0$. We call $\zeta$ a \textit{line element} of $\Lambda$.
Then, 
\begin{equation}\label{s2.2.3}
{\begin{split} \phi &=  2\phi^2_1M^0_0+  \phi^2_1M^2_1+\phi^1_0M^1_0+\phi^2_0M^2_0
  +\varepsilon \tilde{\zeta}(M^3_1+M^3_2)\\
 &\qquad  +r\tilde{\zeta}(M^1_4+M^2_4)+\alpha(M^1_4-M^2_4),\end{split}}
     \end{equation}
%
%
 where $(\phi^1_0,\phi^2_0)$ is a positive-oriented pseudo-orthogonal coframe, $\varepsilon = \pm 1$ 
 is the helicity of $f$, 
 $r$ is a smooth function, and $\alpha$ is a 1-form. From the structure equations, we obtain
\begin{equation}\label{s2.2.4}
  0=d\tilde{\zeta} = d(\phi^1_0+\phi^2_0)=\phi^2_1\wedge (\phi^1_0+\phi^2_0)=\phi^2_1\wedge \tilde{\zeta},
   \end{equation}
which implies
\begin{equation}\label{s2.2.5}
     \phi^2_1=s\tilde{\zeta},
        \end{equation}
where $s:X\to \R$ is a smooth function. 
Let $\ell : \Lambda\to \R$ be a nonzero smooth function. Put $\tilde{\ell} =\ell\circ \pi$ and denote by 
$\eta$ the 1-form
\begin{equation}\label{s2.2.6}
    {\eta =  \tilde{\ell} \tilde{\zeta}(M^1_4-M^2_4).}
       \end{equation}
This is a closed 1-form with values in the polar space ${\mathfrak h}_0^{\perp}$ of the parabolic subalgebra ${\mathfrak h}_0$. 
It follows from \eqref{s2.2.3} and \eqref{s2.2.4} that
 $$
     \phi\wedge \eta + \eta\wedge \phi = 0.
        $$ 
Next, let $\delta := F\cdot \eta\cdot F^{-1}$. Then,
$$
    d\delta = dF\wedge\eta F^{-1}-F\cdot\eta\wedge dF^{-1} = F\cdot(\phi\wedge \eta + \eta\wedge \phi)\cdot F^{-1}=0.
       $$
{Since ${\mathfrak{h}}_{x}=Ad_{F(x)}({\mathfrak h}_0)$ and  ${\mathfrak{h}}^{\perp}_{x}=Ad_{F(x)}({\mathfrak h}^{\perp}_0)$,  
then $\delta|_{x}\in  {T}_x^*(X)\otimes \mathfrak{h}_x^{\perp}$, for every $x\in X$. 
This proves that $\delta$ is an infinitesimal deformation.}

Every infinitesimal deformation arises in this way. In fact, if $\delta$ is an infinitesimal deformation, then $\eta =Ad_{F^{-1}}\delta$ 
is a ${\mathfrak{h}}_0^{\perp}$-valued $1$-form. From $d\delta = 0$ it follows that  $d\eta =\eta\wedge \phi + \phi\wedge \eta$. 
Using \eqref{s2.2.3} it is easy to see that the latter equation holds true if and only if $\eta$ is as in \eqref{s2.2.6}, for some 
nonzero smooth function $\ell:\Lambda\to \R$. This concludes the proof.
\end{proof}

\begin{figure}[ht]
\begin{center}
\includegraphics[height=6cm,width=6cm]{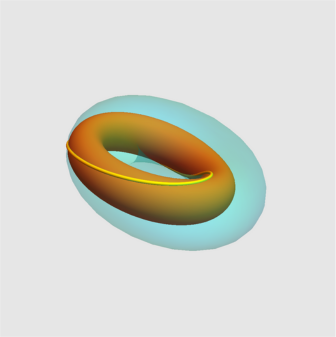}
\includegraphics[height=6cm,width=6cm]{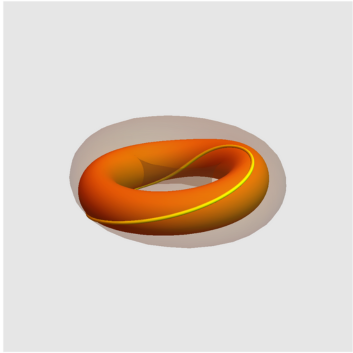}
\caption{\small The timelike immersed tube corresponding to the ``biisotropic circle'' \eqref{bii-circle}: on the left, the portion of the surface contained 
in the positive AdS chamber (a parabolic component); on the right, the portion contained in the negative AdS chamber 
(the other parabolic component). The yellow curve is the curve of umbilical points of the surface. 
}\label{FIG4}
\end{center}
\end{figure}

\begin{defn}
Let $f:X\to \EE$ be a quasi-umbilical immersion and $f^{\sharp}:X\to \EE$ be its dual map. 
According to whether the quadratic form $\langle d\NN,d\df \rangle$ is identically zero or not,
$f$ is called \textit{exceptional} or \textit{general}.
 \end{defn}

\section{Quasi-umbilical immersions: the exceptional case}\label{s3}

\subsection{Biisotropic curves in the neutral space form $\SS$}\label{s3.1} 

Let $\Lambda$ be a connected 1-dimensional manifold and $\zeta$ be a nowhere zero 1-form on $\Lambda$. 
For a smooth map $G:\Lambda \to \R^k$, we denote by $G', G''$, etc., the derivatives of $G$ with respect to $\zeta$.

\begin{defn}
An immersed curve $\Gamma:\Lambda\to \SS$ is called \textit{biisotropic} if $\langle \Gamma',\Gamma'\rangle = \langle \Gamma'',\Gamma''\rangle = 0$ and $(\Gamma'\wedge \Gamma'')|_{\tau}\neq 0$, for every $\tau\in \Lambda$.
\end{defn}

\begin{prop}\label{prop:except-biisotropic}
Let $f:X\to \EE$ be a regular quasi-umbilical immersion. 
Then, $f$ is exceptional if and only if its directrix curve $\Gamma$ is biisotropic.
\end{prop}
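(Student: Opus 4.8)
The plan is to work entirely on the adapted frame bundle $\mathcal{F}_*(f)$, where by Theorem~A (more precisely by the computations \eqref{s2.2.1}--\eqref{s2.2.3} leading to it) we have an explicit normal form for the Maurer--Cartan form $\phi$. Recall that on an adapted frame field $F:U\to\A$ with $\tilde\zeta=\phi^1_0+\phi^2_0$ we have $\phi^3_1=\phi^3_2=\varepsilon_f\tilde\zeta$, $\phi^1_4=r\tilde\zeta+\alpha$, $\phi^2_4=r\tilde\zeta-\alpha$, $\phi^3_0=\phi^3_4=0$, and $\phi^0_0=2\phi^2_1=2s\tilde\zeta$ by \eqref{s2.2.5}. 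The directrix curve is represented by $\Gamma=F_3$ (it is $\pi$-basic, depending only on the leaf parameter), and by \eqref{2.2.bis} its derivative with respect to $\tilde\zeta$ is $\Gamma'=F_3'$ computed from $dF_3=\phi^0_3 F_0+\phi^3_1 F_1-\phi^3_2 F_2=\phi^0_3 F_0+\varepsilon_f\tilde\zeta(F_1-F_2)$, so $\Gamma'$ is (up to the nowhere-zero factor $\varepsilon_f$) congruent to $F_1-F_2$ modulo $F_0$. Note $\langle\Gamma',\Gamma'\rangle=0$ automatically, since $\langle F_1-F_2,F_1-F_2\rangle=0$ and $\langle F_0,\,\cdot\,\rangle$ vanishes on the relevant vectors — this just re-expresses that $\Gamma$ is a null curve.

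Next I would compute $\Gamma''$ by differentiating $F_3'$ again along $\tilde\zeta$. Using $dF_1$ and $dF_2$ from the structure equations $dF_k=\sum_j\phi^j_k F_j$ — here $dF_1=\phi^0_1 F_0+\phi^1_1 F_1+\phi^2_1 F_2+\phi^3_1 F_3$ and $dF_2=\phi^0_2 F_0+\phi^1_2 F_1+\phi^2_2 F_2-\phi^3_2 F_3$, together with $\phi^1_1=-\phi^4_4=\phi^0_0$, $\phi^2_2=0$ (on $\mathcal{F}_2$, since $\phi^2_2$ is not among the basis forms), and $\phi^2_1=\phi^1_2=s\tilde\zeta$ — one finds that $\Gamma''$ decomposes as a multiple of $F_3$ (the $\langle\cdot,\cdot\rangle=1$ direction, irrelevant to nullity since $\langle\Gamma',\Gamma'\rangle=0$ forces $\langle\Gamma',\Gamma''\rangle=0$ and then $\langle\Gamma'',\Gamma''\rangle$ is what matters), a multiple of $F_1-F_2$, a multiple of $F_0$, and crucially a term $\propto F_1+F_2$ whose coefficient involves the $\tilde\zeta$-component of $\phi^1_4$ and $\phi^2_4$, hence $r$. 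The key identity will be that $\langle\Gamma'',\Gamma''\rangle$ is proportional to this $F_1+F_2$-coefficient squared, i.e. proportional to $r^2$ (up to a nonzero factor). Meanwhile, one computes $df^\sharp=dF_4$ from $dF_4=\phi^0_4 F_0+\phi^1_4 F_1+\phi^2_4 F_2$ (using $\phi^3_4=0$, $\phi^4_4=-\phi^0_0$), so that $\langle d\NN,df^\sharp\rangle=\langle dF_3,dF_4\rangle$ expands via the orthogonality relations among the $F_j$ into a multiple of $\phi^3_1\phi^2_4-\phi^3_2\phi^1_4$ (the cross terms between $F_1\pm F_2$ pieces), which equals $\varepsilon_f\tilde\zeta\cdot(-2\alpha)$, i.e. is proportional to $\tilde\zeta\otimes\alpha$ — but then one must also use that this should be symmetrized, and in fact the relevant quadratic form reduces to a multiple of $r\,\tilde\zeta^2$. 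So $\langle d\NN,df^\sharp\rangle\equiv 0$ iff $r\equiv 0$.

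Assembling: $f$ is exceptional $\iff$ $\langle d\NN,df^\sharp\rangle\equiv 0$ $\iff$ $r\equiv 0$ (pointwise on $X$, equivalently on $\Lambda$ after checking $r$ is $\pi$-basic, or at least that its vanishing is) $\iff$ $\langle\Gamma'',\Gamma''\rangle\equiv 0$. Combined with the always-true $\langle\Gamma',\Gamma'\rangle=0$ and with the nondegeneracy $\Gamma'\wedge\Gamma''\neq 0$ — which holds because $f$ is an immersion forcing $\NN$ to have rank $1$ and $\Gamma$ to be a genuine immersed curve with $\Gamma'\neq 0$, while $\Gamma'\wedge\Gamma''=0$ would force the $F_1+F_2$-coefficient and all others to collapse, contradicting the structure equations (this needs a short separate check, probably showing $\Gamma''$ always has a component making it independent of $\Gamma'$, e.g. along $F_1+F_2$ or $F_0$) — we get precisely that $f$ exceptional $\iff$ $\Gamma$ biisotropic.

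The main obstacle I expect is bookkeeping in the second derivative $\Gamma''$: one must carefully track all structure-equation coefficients (in particular correctly handle $\phi^1_1,\phi^2_2,\phi^4_4$ and the vanishing relations on $\mathcal{F}_2(f)$ versus $\mathcal{F}_*(f)$) to isolate the $F_1+F_2$-component and verify its coefficient is a nonzero multiple of $r$; a parallel care is needed to confirm that $\langle d\NN,df^\sharp\rangle$, as a quadratic form, reduces to a nonzero multiple of $r\,\tilde\zeta^{\,2}$ rather than picking up an $\alpha$-contribution that could vanish independently of $r$. A secondary subtlety is the well-definedness of the equivalence across the $H_*$-action on $\mathcal{F}_*(f)$: since the residual group is $H_*=\{R(s):r_{\text{param}}=2s,\ x=0\}$, I should check that $r$ transforms by a nonzero scaling factor under $H_*$ and under reparametrization of $\zeta$, so that the condition $r\equiv0$ is frame- and parametrization-independent — but this is routine given \eqref{frame-change}.
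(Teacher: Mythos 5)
Your strategy coincides with the paper's: pass to the adapted frame, where $\phi^1_4=r\tilde{\zeta}+\alpha$, $\phi^2_4=r\tilde{\zeta}-\alpha$, and show that both ``$f$ exceptional'' and ``$\langle\Gamma'',\Gamma''\rangle=0$'' are equivalent to $r\equiv 0$. The logical skeleton is right, but both of the central computations are mis-predicted in your sketch, and one of the errors would derail the argument if carried through literally. First, for the quadratic form: with $d\NN=\phi^3_1F_1-\phi^3_2F_2$ and $\langle F_1,F_1\rangle=-1$, $\langle F_2,F_2\rangle=+1$, $\langle F_1,F_2\rangle=0$, pairing against $dF_4=\phi^1_4F_1+\phi^2_4F_2-2s\tilde{\zeta}F_4$ gives $-(\phi^3_1\phi^1_4+\phi^3_2\phi^2_4)=-\varepsilon\tilde{\zeta}(\phi^1_4+\phi^2_4)=-2\varepsilon r\tilde{\zeta}^2$. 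Your combination $\phi^3_1\phi^2_4-\phi^3_2\phi^1_4$ is the wrong one: it equals $-2\varepsilon\tilde{\zeta}\,\alpha$ and would lead to ``exceptional iff $\alpha=0$'', which is false. The $\alpha$-term drops not by symmetrization but because $\alpha(F_1-F_2)$ pairs against the null vector $F_1-F_2$. Second, $\Gamma''$ has no $F_1+F_2$ component at all: using $\phi^1_1=\phi^2_2=0$, $\phi^0_1=-\phi^1_4$, $\phi^0_2=\phi^2_4$, $\phi^4_1=-\phi^1_0$, $\phi^4_2=\phi^2_0$, $\phi^3_1=\phi^3_2$, one gets
$$
d(F_1-F_2)=-(\phi^1_4+\phi^2_4)F_0-\phi^2_1(F_1-F_2)-(\phi^1_0+\phi^2_0)F_4
=-\tilde{\zeta}\bigl(2rF_0+s(F_1-F_2)+F_4\bigr),
$$
so $r$ enters through the $F_0$-coefficient and $\langle\Gamma'',\Gamma''\rangle=-4r\varepsilon^2$ is \emph{linear} in $r$, via the cross term $\langle F_0,F_4\rangle=-1$, not quadratic. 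This formula also disposes of your worry about $\Gamma'\wedge\Gamma''\neq 0$: $\Gamma''$ always carries the $F_4$-component with coefficient $-\varepsilon\neq 0$, while $\Gamma'=\varepsilon(F_1-F_2)$ does not, so no separate check is needed. With these two identities corrected, your argument closes and is the paper's proof.
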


\begin{proof}
Consider a line element $\zeta$ on $\Lambda$ and the unique adapted frame field $F$ along $f$ such that 
$\phi^1_0+\phi^2_0=\tilde{\zeta} = \pi^*(\zeta)$ (cf. the proof of Theorem \ref{thmA}). 
Now, the dual map $\df=\varrho { F} _4$, where $\varrho$ is a smooth positive function. From \eqref{s2.2.3} and  \eqref{s2.2.5}, 
we compute
\begin{equation}\label{s3.1.1} 
 d\NN=\varepsilon \tilde{\zeta}({ F}_1-{ F}_2),\quad 
    d{ F}_4=r\tilde{\zeta}({F}_1 {+} {F}_2) + \alpha({ F}_1-F_2)-2s \tilde{\zeta}{ F}_4,
     \end{equation}
which implies $\langle d\df, d\NN\rangle = -2\varepsilon \varrho r {(\tilde{\zeta})^2}$. 
Therefore, $f$ is exceptional if and only if $r=0$. 
On the other hand, $\Gamma'\circ \pi = \varepsilon ({ F}_1-{ F}_2)$. Differentiating the latter identity yields
$$
  (\Gamma''\circ \pi)\tilde{\zeta} = d(\Gamma'\circ \pi) = \varepsilon d ({F}_1-{F}_2)= 
       - \varepsilon \tilde{\zeta}\left(2r{ F}_0+s({ F}_1-{ F}_2) + {F}_4\right).
    $$
Thus, $\langle \Gamma''\circ \pi,\Gamma''\circ \pi\rangle = {- 4r \varepsilon^2}
$, 
from which the claim follows.
\end{proof}

\begin{lemma}
Let $\Gamma:\Lambda\to \SS$ be a biisotropic curve. Then there exist an isotropic (null)
2-dimensional subspace ${\mathbb V}_{\Gamma}\subset \MM$ and 
a 3-dimensional subspace ${\mathbb W}_{\Gamma}\subset \MM$, such that 
\begin{equation}\label{s3.1.2}
 [(\Gamma'\wedge \Gamma'')|_{\tau}]={\mathbb V}_{\Gamma}\subset
   [(\Gamma\wedge\Gamma'\wedge \Gamma'')|_{\tau}]=
      {\mathbb W}_{\Gamma},\quad \forall \,\tau\in \Lambda.
       \end{equation}
\end{lemma}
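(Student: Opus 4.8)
\smallskip

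The plan is to read off, from biisotropy alone, enough orthogonality relations among $\Gamma,\Gamma',\Gamma'',\Gamma'''$ to see that $\Gamma'''$ is forced to lie in $\span\{\Gamma',\Gamma''\}$; once this is known, the bivector $\Gamma'\wedge\Gamma''$ and the trivector $\Gamma\wedge\Gamma'\wedge\Gamma''$ each satisfy a linear homogeneous ODE, so the lines they span in $\Lambda^2(\MM)$, resp. $\Lambda^3(\MM)$, are constant and define the desired $\mathbb{V}_\Gamma\subset\mathbb{W}_\Gamma$. First I would differentiate $\langle\Gamma,\Gamma\rangle=1$ successively, feeding in the hypotheses $\langle\Gamma',\Gamma'\rangle=\langle\Gamma'',\Gamma''\rangle=0$: this gives $\langle\Gamma,\Gamma'\rangle=0$, then (differentiating that and using $\langle\Gamma',\Gamma'\rangle=0$) $\langle\Gamma,\Gamma''\rangle=0$, while differentiating $\langle\Gamma',\Gamma'\rangle=0$ gives $\langle\Gamma',\Gamma''\rangle=0$. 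One further round yields $\langle\Gamma,\Gamma'''\rangle=\langle\Gamma',\Gamma'''\rangle=\langle\Gamma'',\Gamma'''\rangle=0$, each obtained by differentiating a previously established relation and cancelling a vanishing term.

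Next, fix $\tau\in\Lambda$ and put $\mathbb{V}(\tau)=\span\{\Gamma'(\tau),\Gamma''(\tau)\}$ and $\mathbb{W}(\tau)=\span\{\Gamma(\tau),\Gamma'(\tau),\Gamma''(\tau)\}$. The condition $(\Gamma'\wedge\Gamma'')|_\tau\neq 0$ makes $\mathbb{V}(\tau)$ two-dimensional, and the first batch of relations says $\langle\cdot,\cdot\rangle$ vanishes identically on it, so $\mathbb{V}(\tau)$ is totally isotropic (hence a maximal isotropic subspace of $\MM$, whose Witt index is $2$). Because $\langle\Gamma(\tau),\Gamma(\tau)\rangle=1$ while $\mathbb{V}(\tau)$ is isotropic, $\Gamma(\tau)\notin\mathbb{V}(\tau)$, so $\mathbb{W}(\tau)=\mathbb{V}(\tau)\oplus\R\,\Gamma(\tau)$ is three-dimensional and $\dim\mathbb{W}(\tau)^\perp=2$. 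Since $\mathbb{V}(\tau)$ is isotropic and orthogonal to $\Gamma(\tau)$, we get $\mathbb{V}(\tau)\subseteq\mathbb{W}(\tau)^\perp$, and a dimension count forces $\mathbb{W}(\tau)^\perp=\mathbb{V}(\tau)$.

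The key step is then immediate: the second batch of relations says precisely that $\Gamma'''(\tau)\perp\mathbb{W}(\tau)$, i.e. $\Gamma'''(\tau)\in\mathbb{W}(\tau)^\perp=\mathbb{V}(\tau)$, so there are smooth functions $\lambda,\mu$ on $\Lambda$ with $\Gamma'''=\lambda\Gamma'+\mu\Gamma''$. Hence $(\Gamma'\wedge\Gamma'')'=\Gamma'\wedge\Gamma'''=\mu\,\Gamma'\wedge\Gamma''$ and $(\Gamma\wedge\Gamma'\wedge\Gamma'')'=\Gamma\wedge\Gamma'\wedge\Gamma'''=\mu\,\Gamma\wedge\Gamma'\wedge\Gamma''$, each a linear homogeneous ODE for a nowhere-vanishing path in $\Lambda^2(\MM)$, resp. $\Lambda^3(\MM)$. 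Along a connected $\Lambda$ such a path stays on a fixed line, so the subspaces $[(\Gamma'\wedge\Gamma'')|_\tau]$ and $[(\Gamma\wedge\Gamma'\wedge\Gamma'')|_\tau]$ are independent of $\tau$; calling them $\mathbb{V}_\Gamma$ and $\mathbb{W}_\Gamma$ gives \eqref{s3.1.2}, with $\mathbb{V}_\Gamma$ isotropic of dimension $2$, $\mathbb{W}_\Gamma$ of dimension $3$, and $\mathbb{V}_\Gamma\subset\mathbb{W}_\Gamma$ by construction. I do not expect a genuine obstacle here; the only points needing a little care are that biisotropy and all the relations above are unaffected by rescaling the $1$-form $\zeta$ (which only multiplies $\Gamma'\wedge\Gamma''$ and $\Gamma\wedge\Gamma'\wedge\Gamma''$ by nowhere-zero functions), and the brief linear-algebra bookkeeping identifying $\mathbb{W}(\tau)^\perp$ with $\mathbb{V}(\tau)$, which is where the signature of $\MM$ is used.
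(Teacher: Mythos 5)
Your proposal is correct and follows essentially the same route as the paper: differentiate the defining relations to obtain the full set of orthogonality conditions among $\Gamma,\Gamma',\Gamma'',\Gamma'''$, observe that $[\Gamma'\wedge\Gamma'']$ is the polar of $[\Gamma\wedge\Gamma'\wedge\Gamma'']$ so that $\Gamma'''\in\span\{\Gamma',\Gamma''\}$, and conclude constancy of both subspaces. Your explicit ODE argument $(\Gamma'\wedge\Gamma'')'=\mu\,\Gamma'\wedge\Gamma''$ just makes precise the paper's final "this implies the subspaces are constant" step.
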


\begin{proof}
Differentiating the scalar product relations
\begin{equation}\label{s3.1.3} 
  \langle \Gamma,\Gamma\rangle = 1,\quad \langle \Gamma',\Gamma'\rangle = \langle \Gamma'',\Gamma''\rangle = 0
    \end{equation}
yields the following relations
\begin{equation}\label{s3.1.4} 
\langle \Gamma,\Gamma'\rangle=\langle \Gamma,\Gamma''\rangle = \langle \Gamma,\Gamma'''\rangle=\langle \Gamma',\Gamma''\rangle
={\langle \Gamma',\Gamma'''\rangle}
   = \langle \Gamma'',\Gamma'''\rangle = 0.
       \end{equation}
It follows from \eqref{s3.1.3} that $[(\Gamma'\wedge \Gamma'')|_{\tau}]$ is a null 2-dimensional subspace and that 
$[(\Gamma\wedge\Gamma'\wedge \Gamma'')|_{\tau}]$ is a 3-dimensional subspace of type $(+,0,0)$, for every $\tau\in \Lambda$. 
By construction, $[(\Gamma'\wedge \Gamma'')|_{\tau}]= [(\Gamma\wedge\Gamma'\wedge \Gamma'')|_{\tau}]^{\perp}$.  
From \eqref{s3.1.4} it then follows that $\Gamma'''|_{\tau}\in [(\Gamma'\wedge \Gamma'')|_{\tau}]$, for every $\tau$. 
This implies that the 2-dimensional subspace 
$[(\Gamma'\wedge \Gamma'')|_{\tau}]$ and 
the 3-dimensional subspace $[(\Gamma\wedge\Gamma'\wedge \Gamma'')|_{\tau}]$ are constant, as claimed.
\end{proof}

The set of unit vectors lying in ${\mathbb W}_{\Gamma}$ is the union of two disjoint affine planes 
parallel to ${\mathbb V}_{\Gamma}$.  
The trajectory of $\Gamma$ is contained in one of them, denoted by ${\mathcal V}_{\Gamma}$. 
The positive orientation of ${\mathbb V}_{\Gamma}$ induces on ${\mathcal V}_{\Gamma}$  the 
structure of an affine oriented plane and 
$\Gamma$ can be considered as an affine plane curve.  Let ${dA}$ be a positive element of $\Lambda^2({\mathbb V}_{\Gamma})$. 
Since $\Gamma$ has no flex points (i.e., points where ${\Gamma'}\wedge{\Gamma''} = 0$), 
there exists a unique 1-form $\zeta$, such that ${dA}(\Gamma',\Gamma'')=1$,  the \textit{affine line element} of 
$\Gamma$ relative to ${dA}$. If we fix an affine line element, then $\Gamma\wedge \Gamma'\wedge \Gamma''$ 
and $\Gamma'\wedge \Gamma''$ are constant. Differentiating ${dA}(\Gamma',\Gamma'')=1$, we get ${dA}(\Gamma',\Gamma''')=0$. 
Then, $\Gamma'''=h\Gamma'$, where $h:\Lambda\to \R$ is a smooth function, the \textit{affine curvature} of 
$\Gamma$ relative to ${dA}$. 

\begin{remark}[Curvature sign]\label{r:curv-sign}
Our definition of the affine curvature is not the only reasonable possibility; one can also change its sign,
defining curvature to be $\tilde{h} = -h$, as in the classical treatise of Blaschke \cite{BlaschkeII}.
\end{remark}

\begin{defn}
A lightcone basis $\mathfrak{C}=({C}_0,\dots,{C}_4)$ is said to be \textit{calibrated} to $\Gamma$ 
and $\zeta$ if
$$
    \Gamma'\wedge \Gamma''={C}_4\wedge ({C}_1-{C}_2), \quad \Gamma\wedge \Gamma'\wedge \Gamma''
       ={C}_3\wedge {C}_4 \wedge ({C}_1-{C}_2).
           $$
\end{defn}

\noindent If  $\mathfrak{C}$ is a calibrated basis, then
$$
    \Gamma = {C}_3+x({C}_1- {{C}_2})+y{C}_4,
       $$
 where $\gamma=(x,y):\Lambda \to \R^2$ is an affine plane curve ($x'y''-x''y'=1$), 
 the \textit{affine reduction} of $\Gamma$ relative to $\mathfrak{C}$.  Let
 $$
     \upsilon = \sqrt{(x')^2+(y')^2} \quad \text{and} \quad \mu = xy'-x'y 
        $$
be the speed and the angular momentum of $\gamma$. The vector field
 \begin{equation}\label{s3.1.5.0}
    {\rm R}=-x'{C}_0+\frac{1}{2}y'({C}_1+{C}_2)+\frac{\mu^2}{2\upsilon^2}y'({C}_1-{C}_2)+\mu{C}_3-
        \frac{\mu^2}{2\upsilon^2}x'{\rm C}_4\end{equation}
 is called the \textit{isotropic normal vector field} along $\Gamma$ relative to $\mathfrak{C}$. By construction, we have
 \begin{equation}\label{s3.1.5} 
    \langle {\rm R},{\rm R}\rangle = \langle {\rm R},\Gamma\rangle =\langle {\rm R},\Gamma'\rangle =0,\quad 
      \langle {\rm R},\Gamma''\rangle = 1.
            \end{equation}
 If ${\rm R}$ and $\hat{{\rm R}}$ are isotropic normal vector fields relative to different area elements and 
 different calibrated bases, then $\tilde{{\rm R}}=c{\rm R}+r {\Gamma'}$,
 where $c$ is a positive constant and $r$ is a smooth function. Then, the null plane $[({\rm R}\wedge \Gamma')|_{\tau}$ is 
 independent of the choice of the null normal vector field. 
 
 \begin{defn}
 The \textit{normal tube} of $\Gamma$ is the circle bundle over $\Lambda$ given by
\begin{equation}\label{s3.1.6}
 \pi_{{\Lambda}}: {\mathbb T_\Gamma}=\left\{(\tau,[{V}])\in \Lambda\times \EE \mid {V}\in 
    [({\rm R}\wedge \Gamma')|_{\tau} \right\}\to    \Lambda.
    \end{equation}
If ${\rm R}$ is an isotropic normal vector, the elements of the fiber $\pi_{\Lambda}^{-1}({\tau})$ can 
be described by
$[{V}(\tau,\theta)] = [\cos(\theta){\rm R}|_{\tau}+\frac{1}{2}\sin(\theta)\Gamma'|_{\tau}]$, for $\theta\in \R/2\pi\Z$. 
Hence, $\mathbb{T}_\Gamma$ can be identified with $\Lambda\times S^1$ by means of the mapping
\begin{equation}\label{s3.1.7} 
  \Lambda\times S^1\ni (\tau,\theta) \mapsto (\tau,[{V}(\tau,\theta)])\in \mathbb{T}_\Gamma.
     \end{equation}
\end{defn}
Let 
\begin{equation}\label{s3.1.8}
  {\rm C}_{\pm} =\{(\tau,[{V}(\tau,\pm \pi/2)])   \mid \tau\in \Lambda\} \subset \mathbb{T}_\Gamma,
    \end{equation}
    and call them the \textit{umbilical curves} of $\mathbb{T}_\Gamma$.
The complement of ${\rm C}_{+}\cup {\rm C}_{-}$ consists of the two connected open sets
\begin{equation}\label{s3.1.9}
  \mathbb{T}^{\pm}_\Gamma =\{(\tau,[{V}(\tau,\theta)]) \mid \tau\in \Lambda, \, \sgn(\cos\theta)=\pm 1\},
    \end{equation}
referred to as the \textit{positive and negative parabolic components} of $\mathbb{T}_\Gamma$. 

\begin{figure}[ht]
\begin{center}
\includegraphics[height=6cm,width=6cm]{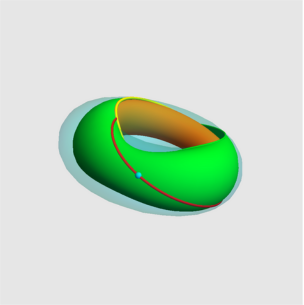}
\includegraphics[height=6cm,width=6cm]{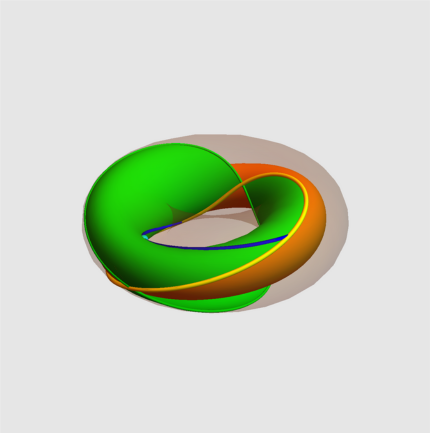}
%
\caption{\small The positive (left) and negative (right) {parabolic cylinders} 
$f_\Gamma^+({\mathbb T}^{+}_\Gamma)$ and $f_\Gamma^-({\mathbb T}^{-}_\Gamma)$
and (in green) the central torus of $f_\Gamma({\mathbb T}_\Gamma)$ for
the tube ${\mathbb T}_\Gamma$
along the ``biisotropic circle'' \eqref{bii-circle}. On the left, the positive part of the 
central torus; on the right, its negative part. 
The red and blue curves are the positive and negative parts 
of the line of tangency 
of the osculating torus with $f_\Gamma({\mathbb T}_\Gamma)$. 
The {yellow line} is the umbilical line. 
}\label{FIG5}
\end{center}
\end{figure}

\begin{defn}\label{def:taut-map}
We call 
$f_{\Gamma}:\mathbb{T}_\Gamma \to \EE$, $(\tau,[{V}]) \longmapsto [{V}]$,
the \textit{tautological map of} $\Gamma$.
The restrictions of $f_{\Gamma}$ to $ \mathbb{T}^{\pm}_\Gamma$ are denoted by $f^{\pm}_{\Gamma}$, respectively.
%
\end{defn}

\subsection{The structure of exceptional quasi-umbilical surfaces}\label{s3.2} 

Using the notation and the constructions of the previous section,
we are now in a position to describe the geometric structure of exceptional quasi-umbilical surfaces. 
We can prove the following.

\begin{thmx}\label{thmB}
Let $\Gamma:\Lambda\to \SS$ be a biisotropic curve. Then, the tautological map $ f_{\Gamma}$ is a timelike immersion 
with conformal Gauss map $\Gamma\circ \pi_{{\Lambda}}$  and umbilic locus ${\rm C}_{+}\cup {\rm C}_{-}$. In addition,
$f^{\pm}_{\Gamma}$  are exceptional quasi-umbilical immersions with helicities $\pm1$ and
dual maps $(f^{\pm}_{\Gamma})^{\sharp}=[\mp \Gamma''\circ  \pi_{{\Lambda}}  ]$.
Conversely, let $f:X\to \EE$ be a regular quasi-umbilical immersion with helicity $\varepsilon=\pm 1$. 
If $f$ is exceptional, then 
{$f(X)\subset f^{\pm}_{\Gamma}\left({\mathbb T}^{\pm}_{\Gamma}\right)$},
where $\Gamma$ is the 
directrix curve of $f$.
\end{thmx}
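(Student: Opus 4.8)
The plan is to split the statement into the direct assertion (starting from a biisotropic curve $\Gamma$, the tautological map $f_\Gamma$ is a timelike immersion with the asserted conformal Gauss map, umbilic locus, and dual maps) and the converse. For the direct part I would work with a calibrated lightcone basis $\mathfrak{C}=(C_0,\dots,C_4)$ adapted to $\Gamma$ and its affine line element $\zeta$, as introduced just before Definition \ref{def:taut-map}. Using the affine reduction $\Gamma = C_3 + x(C_1-C_2) + yC_4$ and the isotropic normal field $\mathrm{R}$ of \eqref{s3.1.5.0}, the fiber of $\mathbb{T}_\Gamma$ over $\tau$ is parametrized by $V(\tau,\theta) = \cos\theta\,\mathrm{R}|_\tau + \tfrac12\sin\theta\,\Gamma'|_\tau$. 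The key computation is to extend $(\Gamma,\Gamma',\mathrm{R})$, together with the Frenet-type relations $\Gamma''' = h\Gamma'$ and \eqref{s3.1.5}, to an explicit $\A$-valued frame along $\mathbb{T}_\Gamma$ with $F_0$ a lift of $f_\Gamma$ and $F_3$ proportional to $\Gamma\circ\pi_\Lambda$; I would then read off the pullback $F^*\varphi$ and check directly the first/second order frame conditions $\phi^3_0 = 0$, $\phi^1_0\wedge\phi^2_0 > 0$, and \eqref{2.1}, so that $f_\Gamma$ is a timelike immersion and $\mathcal{N}_{f_\Gamma} = \Gamma\circ\pi_\Lambda$. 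From the explicit frame, $f_\Gamma(x)\wedge d\mathcal{N}_{f_\Gamma}|_x$ vanishes exactly where $\cos\theta = \pm 1$ has the wrong... more precisely where the coframe $(\phi^1_0,\phi^2_0)$ degenerates or where the second-fundamental-form data $h_{ij}$ vanish; a short calculation should show this locus is precisely $\theta = \pm\pi/2$, i.e. $\mathrm{C}_+\cup\mathrm{C}_-$ of \eqref{s3.1.8}. On the two parabolic components $\mathbb{T}^\pm_\Gamma$ one checks $\varrho_{f_\Gamma}\equiv 0$ with the umbilic part nonvanishing, so $f^\pm_\Gamma$ are quasi-umbilical; computing the helicity via the sign of $h = h_{11}$ (after normalizing the frame to be adapted) gives $\pm 1$ on $\mathbb{T}^\pm_\Gamma$; and since by construction the directrix curve is $\Gamma$, which is biisotropic by hypothesis, Proposition \ref{prop:except-biisotropic} immediately yields that $f^\pm_\Gamma$ are \emph{exceptional}. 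Finally, the dual map is the second envelope of the family of central tori; from the adapted frame the dual is $[F_4]$, and the identity $(\Gamma''\circ\pi_\Lambda)\tilde\zeta$ expressed in terms of the frame (as in the proof of Proposition \ref{prop:except-biisotropic}, where $\Gamma'' \equiv -\varepsilon(2r F_0 + s(F_1-F_2) + F_4)$ and here $r=0$ in the exceptional case) pins down $F_4$ up to scale as $\mp\Gamma''$, giving $(f^\pm_\Gamma)^\sharp = [\mp\Gamma''\circ\pi_\Lambda]$.

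For the converse, start from a regular quasi-umbilical immersion $f:X\to\EE$ of exceptional type with helicity $\varepsilon$, directrix curve $\Gamma:\Lambda\to\SS$, and natural projection $\pi:X\to\Lambda$. By Proposition \ref{prop:except-biisotropic}, $\Gamma$ is biisotropic, so the construction of $\mathbb{T}_\Gamma$, the normal field $\mathrm{R}$, and $f_\Gamma$ applies. The plan is to exhibit a smooth map $\Psi:X\to\mathbb{T}_\Gamma$ over $\Lambda$ (i.e. $\pi_\Lambda\circ\Psi = \pi$) with $f_\Gamma\circ\Psi = f$, and then to show $\Psi$ lands in a single parabolic component. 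Concretely, fix a line element $\zeta$ on $\Lambda$ and the unique adapted frame field $F:X\to\A$ with $\phi^1_0+\phi^2_0 = \tilde\zeta = \pi^*\zeta$, as in the proofs of Theorem \ref{thmA} and Proposition \ref{prop:except-biisotropic}; since $r=0$ there, the structure equations \eqref{s3.1.1} simplify, and one sees that the lift $F_0$ of $f$ lies, at each $x$, in the null plane $[\,\mathrm{R}\wedge\Gamma'\,]|_{\pi(x)}$ — this is exactly the plane cutting out the fiber of $\mathbb{T}_\Gamma$. Thus $(\pi(x),[F_0(x)])\in\mathbb{T}_\Gamma$ defines $\Psi$, and $f_\Gamma\circ\Psi = f$ is immediate. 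Writing $F_0 = \cos\theta\,\mathrm{R} + \tfrac12\sin\theta\,\Gamma'$ along $X$ gives a smooth angle function $\theta:X\to\R/2\pi\Z$, and the quasi-umbilicity of $f$ (nonvanishing of $f(x)\wedge d\mathcal{N}_f|_x$) forces $\cos\theta\neq 0$ everywhere, so $\Psi(X)\subset\mathbb{T}^+_\Gamma$ or $\mathbb{T}^-_\Gamma$ by connectedness of $X$; matching the helicity $\varepsilon$ to the computed helicity $\pm 1$ on $\mathbb{T}^\pm_\Gamma$ identifies which component, giving $f(X)\subset f^\pm_\Gamma(\mathbb{T}^\pm_\Gamma)$.

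The main obstacle I expect is the bookkeeping in the direct part: producing the explicit $\A$-valued frame along $\mathbb{T}_\Gamma$ and verifying it is second order (and then normalizing to adapted) requires carefully differentiating $V(\tau,\theta)$ in both $\tau$ and $\theta$, using $\Gamma''' = h\Gamma'$ and the orthogonality relations \eqref{s3.1.4}--\eqref{s3.1.5}, and organizing the resulting Maurer--Cartan entries so that the conditions $\phi^3_0=0$ and \eqref{2.1}, \eqref{2.1ada} become transparent. The choice of $\tfrac12\sin\theta$ rather than $\sin\theta$ in the parametrization is presumably what makes $(\phi^1_0,\phi^2_0)$ come out as a pseudo-orthogonal coframe, and getting that normalization right — along with tracking the $\theta$-dependence of the conformal factor so that the umbilic locus is exactly $\theta=\pm\pi/2$ — is the delicate point. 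Once the frame is in hand, the identification of the conformal Gauss map, the umbilic locus, the helicities, the exceptional type (via Proposition \ref{prop:except-biisotropic}), and the dual maps are all short reads from its entries, and the converse is essentially the observation that the adapted frame of $f$ reconstructs $\Psi$.
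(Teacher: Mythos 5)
Your proposal follows essentially the same route as the paper: for the direct part, the explicit lift $\mathrm{T}(\tau,\theta)=\cos\theta\,\mathrm{R}+\tfrac12\sin\theta\,\Gamma'$ and an adapted frame with $F_3=\Gamma\circ\pi_\Lambda$, $F_4=\mp\Gamma''\circ\pi_\Lambda$, reading off the Maurer--Cartan entries to get the timelike, quasi-umbilical, helicity, and umbilic-locus claims (with exceptionality via Proposition \ref{prop:except-biisotropic}, which is equivalent to the paper's direct check that $r=0$); for the converse, showing that the lift $F_0$ of an adapted frame lies in the fiber plane $[\mathrm{R}\wedge\Gamma']$ with nonzero $\mathrm{R}$-component, exactly as the paper does by expanding $\mathrm{R}\circ\pi_\Lambda$ in the frame. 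The plan is correct; the deferred computations are precisely the ones the paper carries out.
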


\begin{proof}
Let $\mathbb{T}_\Gamma$ be identified with
$\Lambda\times S^1$ as in \eqref{s3.1.7}. Take a positive area element ${dA}$ on ${\mathbb V}_{\Gamma}$ and denote by $\zeta$
the corresponding affine arc element.  Consider a calibrated lightcone basis $\mathfrak{C}$ and the associated null normal 
vector field ${\rm R}$ (cf. \eqref{s3.1.5.0}). 
Let $h$ be the affine curvature of $\Gamma$. Let ${\rm T}: \Lambda\times S^1 \to \MM$ be the lift of  $f_{\Gamma}$ 
defined by ${\rm T}(\tau,\theta)= \cos(\theta){\rm R}|_{\tau}+\frac{1}{2}\sin(\theta)\Gamma'|_{\tau}$.
Then
$$
  d{\rm T}= \Big(-\sin(\theta){\rm R}+\frac{1}{2}\cos(\theta)\Gamma'\Big)d\theta
    +\Big(\cos(\theta){\rm R}'+\frac{1}{2}\sin(\theta)\Gamma''\Big)\zeta.
   $$
Differentiating \eqref{s3.1.5}, 
{taking into account that $\Gamma'''=h\Gamma'$},
yields $\langle {\rm R},{\rm R}'\rangle = \langle {\rm R}',\Gamma''\rangle = 0$, and
$\langle {\rm R}',\Gamma'\rangle = -1$, 
from which one computes
$$
  {\langle {d{\mathrm T}},{d{\mathrm T}}\rangle=-\zeta\left({d}\theta-(\cos\theta)^2\langle {\rm R}',{\rm R}'\rangle\zeta\right).}
  $$
This implies that the tautological map $f_{\Gamma}$ is a timelike immersion.

Let ${F}^{\pm}_0: {\mathbb T}_{\Gamma}^{\pm}\to \A$ be the lift of $ f^{\pm}_{\Gamma}$ defined by
\begin{equation}\label{s3.2.0.1}
     {F}^{\pm}_0 = \frac{\pm 1}{\cos \theta} {\rm T}= \pm\Big({\rm R}+\frac{1}{2}\tan(\theta)\Gamma'\Big).
         \end{equation}
Denote by $(\partial_{1},\partial_{2})$  the trivialization of ${T}({\mathbb T}_\Gamma^{\pm})$ dual to 
the coframe $(\zeta,{du})$, where $u=\tan\theta$. 
Consider the maps ${F}^{\pm}_1,\dots,{F}^{\pm}_4: \mathbb{T}^{\pm}_\Gamma\to \MM$, defined by
\begin{equation}\label{s3.2.0.2}\begin{cases}
 {F}^{\pm}_1=-\partial_1 {F}^{\pm}_0-(\langle \partial_1  {F}^{\pm}_0,\partial_1 {F}^{\pm}_0\rangle+1)\partial_2 {F}^{\pm}_0,\\
{F}^{\pm}_2=-\partial_1 {F}^{\pm}_0-(\langle \partial_1 {F}^{\pm}_0,\partial_1 {F}^{\pm}_0\rangle-1)\partial_2 {F}^{\pm}_0,\\
{F}^{\pm}_3=\Gamma\circ \pi_{{\mathbb T}},\\
{F}^{\pm}_4=\mp\Gamma''\circ \pi_{{\mathbb T}}.
      \end{cases}\end{equation}
Then,  ${F}_{\pm}=({F}^{\pm}_0,\dots,{F}^{\pm}_4)$ is an adapted frame field along $ f^{\pm}_{\Gamma}$. 
The 1-form 
$\phi_{\pm}={F}_{\pm}^{-1}{d}{F}_{\pm}$ is as in 
\eqref{s2.2.3}, with $\phi^2_1=0$, $r=0$, $\alpha=-h\zeta$, and
\begin{equation}\label{s3.2.0.3}
 \begin{cases}
  \phi^1_0=-\frac{1}{2}\left(du +(\frac{\mu^2+(xx'-yy')^2}{\upsilon^4}+1)\zeta  \right),\\
   \phi^2_0=\frac{1}{2}\left(du +(\frac{\mu^2+(xx'-yy')^2}{\upsilon^4}-1)\zeta  \right),
      \end{cases}
        \end{equation}
where $\gamma=(x,y)$ is the affine reduction of $\Gamma$ with respect to $\mathfrak{C}$. 
This proves that  $f^{\pm}_{\Gamma}$ 
are exceptional quasi-umbilical immersions with helicity $\varepsilon=\pm1$, conformal Gauss maps $\Gamma\circ \pi_{{\Lambda}}$,
and dual maps  $[-\varepsilon \Gamma''\circ \pi_{{\mathbb T}}]$. By continuity, $\Gamma\circ \pi_{{\Lambda}}$ is the conformal Gauss 
map of $f_{\Gamma}$. 
From the identity {$(d\NN\wedge {\rm T})|_{(\tau,\theta)}=\cos(\theta)({\rm R}\wedge \Gamma')|_{\tau}\zeta$}, it follows
that $(\tau,\theta)$ is an umbilical point if and only if $(\tau,\theta)\in {\rm C}_{+}\cup {\rm C}_-$.

Conversely, let $f:X\to \EE$ be a regular, quasi-umbilical immersion, with directrix curve $\Gamma : \Lambda\to \SS$, 
and let ${\mathcal F}_*(f)$ be the bundle of adapted frames along $f$. Let ${\mathfrak N}_2^+(\MM)$ denote the 
Grassmannian of {positive} null planes of $\MM$. 
The map
$$ 
     {\mathcal F}_*(f)\ni (\tau,F) \longmapsto [{F}_4\wedge ({F}_1-{F}_2)]\in {\mathfrak N}_2^+(\MM)
       $$ 
is constant on the fibers of ${\mathcal F}_*(f)$. So, it descends to a smooth map ${\mathbb V}_f:X\to {\mathfrak N}_2^+(\MM)$. 
Let $F:U\subset X\to \A$ be a local section of ${\mathcal F}_*(f)$. 
The 1-form $\phi=F^{-1}dF$ is as in \eqref{s2.2.3}, with $r=0$ and $\alpha=-q\zeta$, for some smooth function $q$.  
Then ${d} ({F}_4\wedge ({F}_1-{F}_2))=-3\phi^2_1  ({F}_4\wedge ({F}_1-{F}_2))$. 
This implies that $ [{F}_4\wedge ({F}_1-{F}_2)]$ is constant. 
By construction, ${\mathbb V}_f|_{U}=[{F}_4\wedge ({F}_1-{F}_2)]$. 
This proves that ${\mathbb V}_f$ is a constant positive null plane. 
Next, we choose a positive area element ${dA}\in \Lambda^2({\mathbb V}_f)$ and consider 
the unique global cross section $F: X\to \A$ of ${\mathcal F}_*(f)$, such that ${dA}({F}_4,{F}_1-{F}_2)=1$. 
Then ${F}_4\wedge ({F}_1-{F}_2)$ is constant. Consequently, $\phi^2_1$ vanishes identically.  
This implies that $\phi^1_0$ and $\phi^2_0$ are closed. Let $\tilde{\zeta}$ be a nowhere zero 1-form on $\Lambda$. 
Since ${F}_3=\Gamma\circ \pi_{\Lambda}$, then $\phi^1_0+\phi^2_0=\rho  \pi_{\Lambda}^*(\tilde{\zeta})$, where $\rho$ 
is a nowhere vanishing smooth function. On the other hand, the 1-form $\phi^1_0+\phi^2_0$ is closed. This implies that 
${d}\rho\wedge \pi_{\Lambda}^*(\tilde{\zeta})=0$. Then, $\rho$ is constant on the fibers of $\pi_{\Lambda}$. 
Therefore, there exists a nowhere zero smooth function $\varrho:\Lambda\to \R$, such that
$\rho=\varrho\circ \pi_{\Lambda}$.  Put $\zeta = \varrho \tilde{\zeta}$, then $\pi_{\Lambda}^*(\zeta)=\phi^1_0+\phi^2_0$. 
Since $\phi^2_1=0$, the 1-form $\alpha$ is closed. Then, ${dq}\wedge \zeta=0$. This in turn implies that 
$q=h\circ \pi_{\Lambda}$, 
where $h:\Lambda\to \R$ is a smooth function. Differentiating ${F}_3=\Gamma\circ \pi_{\Lambda}$ yields
\begin{equation}\label{s3.2.1}
  {F}_1-{F}_2=\Gamma'\circ \pi_{\Lambda},\quad F_4=-\varepsilon \Gamma''\circ \pi_{\Lambda},
   \quad {d}({F}_3\wedge ({F}_1-{F}_2)\wedge {F}_4)=0.
      \end{equation}
This shows that $\Gamma$ is a biisotropic curve, such that ${\mathbb V}_{\Gamma}={\mathbb V}_{f}$. 
In addition, $\zeta$ is the affine line element of $\Gamma$ relative to the area element ${dA}$. Next, we choose 
a lightcone basis $\mathfrak{C}$ calibrated to $\Gamma$ 
and $\zeta$ and consider the associated isotropic normal vector field ${\rm R}$. By construction, ${\rm R}$ satisfies 
the identities \eqref{s3.1.5}. 
We write ${\rm R}\circ \pi_{\Lambda}=\sum_{j=0}^4 r_j {F}_j$, where $r_0,\dots,r_4$ are smooth functions. 
From \eqref{s3.2.1} and \eqref{s3.1.5} it follows that $r_0=\varepsilon$, $r_1+r_2=r_3=r_4=0$. 
Then ${F}_0=\varepsilon {\rm R}\circ \pi_{\Lambda}-r_1\Gamma'\circ \pi_{\Lambda}$. 
{This implies that $f({ x})=[{F}_0({ x})]\in f_\Gamma^{\pm}({\mathbb T}^{\pm}_\Gamma)$}, 
for every ${x}\in X$.
\end{proof}

\begin{figure}[ht]
\begin{center}
\includegraphics[height=6cm,width=6cm]{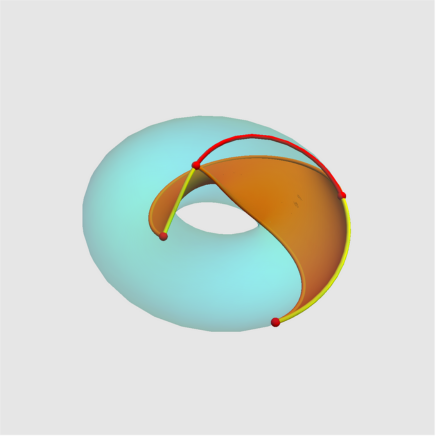}
\includegraphics[height=6cm,width=6cm]{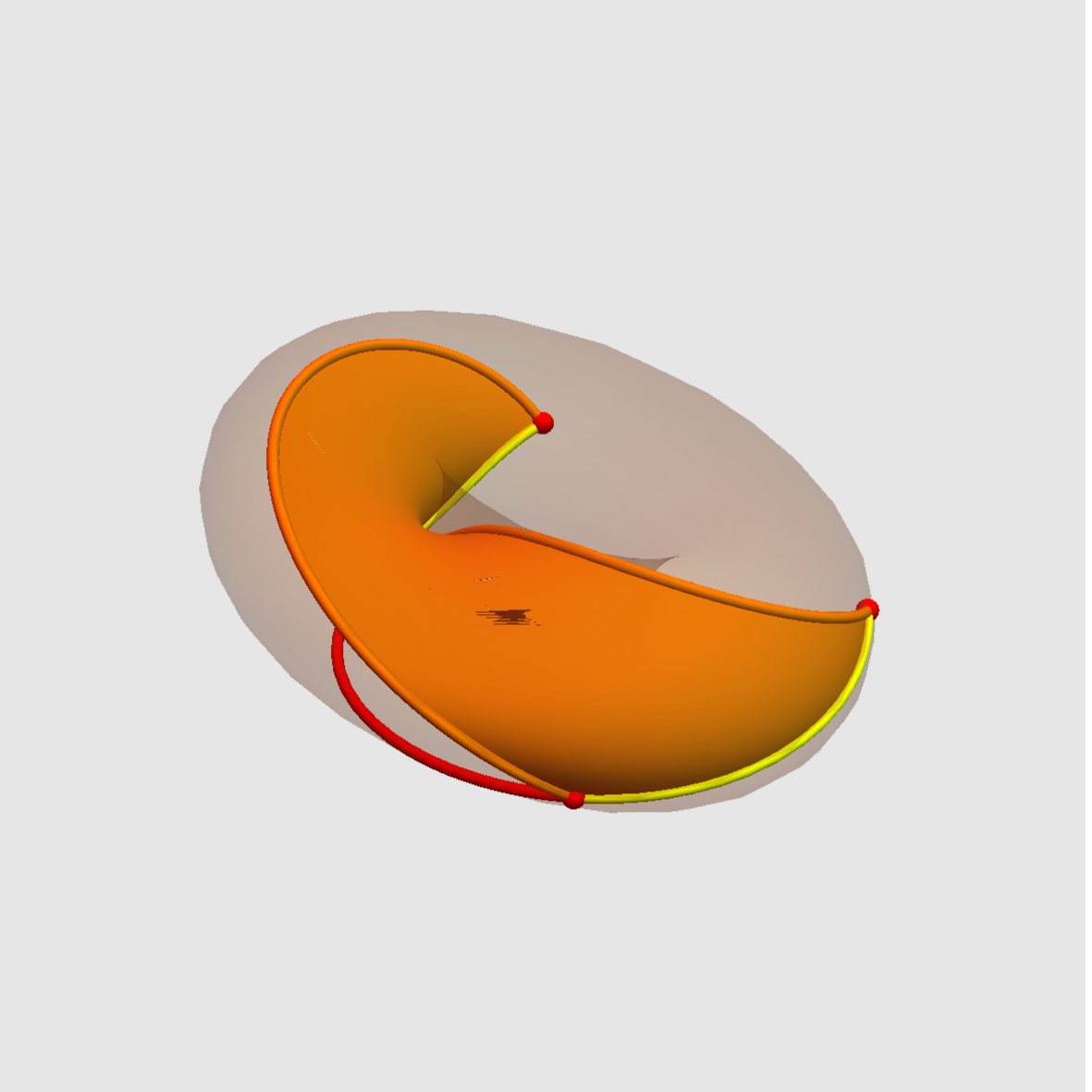}
\caption{\small The positive (on the left) and the negative (on the right) parabolic components of the tube 
along the ``biisotropic equilateral hyperbola" \eqref{bii-hyperbola}. The yellow arcs  are the umbilical curves, while the red arcs 
are the images of the dual maps.
}\label{FIG6}
\end{center}
\end{figure}

\begin{remark}
Theorem \ref{thmB} shows that, locally, any exceptional quasi-umbilical immersion $f$ is a parabolic component of the 
normal tube of a biisotropic curve. 
The rank of the dual map $f^{\sharp}$ is less than or equal to 1 and $df^{\sharp}|_{{ x}}=0$ if and only 
if the affine curvature of the directrix curve vanishes at $\pi_{\Lambda}({ x})$.
\end{remark}

\begin{prop}\label{pr1.s3.2}
Let $\Gamma : \Lambda\to \SS$ and $\tilde{\Gamma} : \tilde{\Lambda}\to \SS$ be two biisotropic curves defined on open intervals. 
Let $\zeta$ and $\tilde{\zeta}$ be affine line elements of $\Gamma$ and $\tilde{\Gamma}$, respectively. 
If the affine lengths $\ell=\int_{\Lambda}\zeta$ and $\tilde{\ell}=\int_{ \tilde{\Lambda}}\tilde{\zeta}$ are finite, 
then $f^{\pm}_{\Gamma}$  and $f^{\pm}_{\tilde{\Gamma} }$ are second order deformations of each other (with unfixed parameters). 
\end{prop}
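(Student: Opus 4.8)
The plan is to obtain $f^{+}_{\tilde{\Gamma}}$ from $f^{+}_{\Gamma}$ by an isothermic deformation that alters \emph{only} the affine curvature of the directrix curve, followed by a rigid motion and a change of parameters; this realizes the two immersions as second order deformations of each other. It suffices to treat the positive parabolic components, the case of $f^{-}$ being identical (and the statement $f^{+}$ vs.\ $f^{-}$ being excluded by helicity, see below). First I would use the finiteness hypothesis: after rescaling a positive area element on ${\mathbb V}_{\tilde{\Gamma}}$ I may assume $\int_{\tilde{\Lambda}}\tilde{\zeta}=\int_{\Lambda}\zeta$, and then, parametrizing both base intervals by affine arc length, there is a unique orientation-preserving diffeomorphism $\psi:\Lambda\to\tilde{\Lambda}$ with $\psi^{*}\tilde{\zeta}=\zeta$; via $\psi$ I transport the affine curvature $\tilde{h}$ of $\tilde{\Gamma}$ to a smooth function on $\Lambda$ and set $\mu:=h-\tilde{h}$. (If $\mu\equiv 0$ the two directrix curves are already $\A$-congruent and one inserts an auxiliary nontrivial isothermic deformation to match the definition; so assume $\mu\not\equiv 0$.)

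Next I would construct the deformation. By Theorem~\ref{thmA}, $f^{+}_{\Gamma}$ is isothermic and, by the proof of that theorem, its infinitesimal deformations are exactly the forms $\delta_{\mu}=F\cdot\bigl(\tilde{\mu}\,\pi_{\Lambda}^{*}\zeta\,(M^{1}_{4}-M^{2}_{4})\bigr)\cdot F^{-1}$, with $\tilde{\mu}=\mu\circ\pi_{\Lambda}$ and $\mu:\Lambda\to\R$ smooth, where $F$ is the adapted, calibrated frame field along $f^{+}_{\Gamma}$ produced in the proof of Theorem~\ref{thmB}; for this frame, $\phi=F^{-1}dF$ has the shape \eqref{s2.2.3} with $\phi^{1}_{0}+\phi^{2}_{0}=\pi_{\Lambda}^{*}\zeta$, $\phi^{2}_{1}=0$, $r=0$ and $\alpha=-h\,\pi_{\Lambda}^{*}\zeta$. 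Since $\delta_{\mu}$ is closed and takes values in an abelian subalgebra, it satisfies the Maurer--Cartan equation; because ${\mathbb T}^{+}_{\Gamma}\cong\Lambda\times(-\pi/2,\pi/2)$ is simply connected, I may integrate $\delta_{\mu}$ to a smooth $\Delta:{\mathbb T}^{+}_{\Gamma}\to\A$ with $\Delta^{-1}d\Delta=\delta_{\mu}$. By the discussion in Remark~\ref{r.s2.1}, $\hat{f}:=\Delta\cdot f^{+}_{\Gamma}$ is then a second order deformation of $f^{+}_{\Gamma}$ with fixed parameters.

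The heart of the argument is to recognise $\hat{f}$. The lift $\hat{F}:=\Delta F$ is an adapted frame along $\hat{f}$, and $\hat{F}^{-1}d\hat{F}=\mathrm{Ad}_{F^{-1}}(\delta_{\mu})+\phi=\phi+\tilde{\mu}\,\pi_{\Lambda}^{*}\zeta\,(M^{1}_{4}-M^{2}_{4})$; substituting $\mu=h-\tilde{h}$, this is again of the form \eqref{s2.2.3} with $r=0$, $\phi^{2}_{1}=0$, $\phi^{1}_{0}+\phi^{2}_{0}=\pi_{\Lambda}^{*}\zeta$, and with $\alpha$ now equal to $-\tilde{h}\,\pi_{\Lambda}^{*}\zeta$. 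Hence $\hat{f}$ is a regular exceptional quasi-umbilical immersion (Proposition~\ref{prop:except-biisotropic}, as $r=0$) sharing the vertical distribution, leaf space and projection $\pi_{\Lambda}$ of $f^{+}_{\Gamma}$, and its directrix curve $\hat{\Gamma}:\Lambda\to\SS$ is biisotropic, has affine line element $\zeta$, and has affine curvature $\tilde{h}$, this last being read off from the $\alpha$-coefficient exactly as in the proof of Theorem~\ref{thmB}. Thus $\hat{\Gamma}$ and the $\psi$-transported $\tilde{\Gamma}$ carry the same affine line element and the same affine curvature; since affine curvature, viewed as a function of affine arc length, is a complete invariant of biisotropic curves under $\A$ --- the uniqueness in the moving-frame ODE for a calibrated lightcone basis, equivalently the fundamental theorem for equi-affine plane curves applied to the affine reduction --- there is $A\in\A$ with $\tilde{\Gamma}\circ\psi=A\cdot\hat{\Gamma}$, whence $f^{+}_{\tilde{\Gamma}}=A\cdot f^{+}_{\hat{\Gamma}}=A\cdot\hat{f}$ up to the normal-tube reparametrization induced by $\psi$ and the fibre $S^{1}$-identification. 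Composing the second order deformation $\hat{f}$ of $f^{+}_{\Gamma}$ with the fixed motion $A$ and this change of parameters then exhibits $f^{+}_{\Gamma}$ and $f^{+}_{\tilde{\Gamma}}$ as second order deformations of each other with unfixed parameters.

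I expect the main obstacle to be precisely the bookkeeping in the last paragraph: checking that $\delta_{\mu}$ moves only the $\mathfrak{h}_{0}^{\perp}$-part of the Maurer--Cartan form and by exactly $\mu\,\pi_{\Lambda}^{*}\zeta\,(M^{1}_{4}-M^{2}_{4})$, so that the deformed surface is again the positive parabolic component of a normal tube, now over a curve that --- thanks to the chosen normalisation of affine arc lengths --- is $\A$-congruent to $\tilde{\Gamma}$. A routine alternative that avoids invoking the fundamental theorem of biisotropic curves would be to write the change of parameters explicitly as the shear of ${\mathbb T}^{+}_{\Gamma}$ onto ${\mathbb T}^{+}_{\tilde{\Gamma}}$ matching the soldering forms \eqref{s3.2.0.3}, and then to integrate directly the closed $\mathfrak{h}_{0}^{\perp}$-valued difference of Maurer--Cartan forms. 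In both routes the finiteness of the affine lengths $\ell,\tilde{\ell}$ is used solely to place the two directrix curves over a common parameter interval with matching affine arc lengths, and the pairing of $f^{+}$ with $f^{+}$ rather than with $f^{-}$ is forced by the helicity, which equals $+1$ for each component and is preserved under second order deformation.
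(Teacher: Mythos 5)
Your proposal is correct and, at its core, is the paper's own argument: both hinge on normalizing the (finite) affine lengths so that the two directrix curves live over a common interval with matching affine line elements, and on the observation that, in the calibrated adapted frames of Theorem~\ref{thmB}, the Maurer--Cartan forms differ only in the $\alpha$-slot by $(\tilde h-h)\,\pi_\Lambda^*\zeta\,(M^1_4-M^2_4)$, a closed $\mathfrak{h}^\perp$-valued form that integrates to the deformation $\Delta$. The one organizational difference is the endgame: the paper first applies explicit shears $\Phi,\tilde\Phi$ in the fibre variable $u$ so that the soldering forms \eqref{s3.2.0.3} of the two frames coincide on the nose, and then simply sets $\Delta=\tilde G G^{-1}$, so the identity $f^{\pm}_{\tilde\Gamma}\circ\tilde\Phi=\Delta\cdot f^{\pm}_{\Gamma}\circ\Phi$ is tautological; you instead integrate the deformation form abstractly and then recognize $\Delta\cdot f^{+}_{\Gamma}$ via a congruence theorem (same affine line element and affine curvature implies $\A$-congruence of biisotropic curves). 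That extra ingredient is true --- it follows from the simple transitivity of $\A$ on calibrated lightcone bases together with the equi-affine fundamental theorem applied to the affine reductions, or from the uniqueness in the converse half of Theorem~\ref{thmB} --- but you invoke it without proof, whereas the paper's direct construction of $\Delta$ sidesteps it entirely; your ``routine alternative'' in the closing paragraph is in fact exactly the published proof. Two harmless slips: with the conventions of \eqref{s3.2.0.3} one gets $\phi^1_0+\phi^2_0=-\pi_\Lambda^*\zeta$ (a sign the paper itself absorbs by allowing $\zeta\mapsto-\zeta$), and the reparametrization relating $\Delta\cdot f^{+}_{\Gamma}$ to the standard tube parametrization of $f^{+}_{\hat\Gamma}$ is the fibrewise shear, not merely an ``$S^1$-identification''.
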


\begin{proof}
The affine line elements are defined up to a constant positive multiplicative factor. So, we may assume that $\ell= \tilde{\ell}=2$.
Let  $s$ and $\tilde{s}$ be smooth primitives  of $\zeta$ and $\tilde{\zeta}$ such that $s(\Lambda)=\tilde{s}(\tilde{\Lambda})=(-1,1)$. 
By possibly replacing $\Gamma$ with $\Gamma\circ s^{-1}$ and $\tilde{\Gamma}$ with $\tilde{\Gamma}\circ \tilde{s}^{-1}$,
we may assume $\Lambda= \tilde{\Lambda}=(-1,1)$ and $\zeta = \tilde{\zeta}=ds$.  Let  $\mathfrak{C}$ and $\tilde{\mathfrak{C}}$ 
be lightcone bases calibrated 
to $(\Gamma,\zeta)$ and  $(\tilde{\Gamma},\tilde{\zeta})$, respectively.  
Denote  by ${\rm R}$ and $\tilde{{\rm R}}$ the corresponding isotropic normal vectors fields, defined as in \eqref{s3.1.5}, 
and let $\gamma(s) =(x(s),y(s))$ and 
$\tilde{\gamma}(s)=(\tilde{x}(s),\tilde{x}(s))$ be the affine reductions of $\Gamma$ and $\tilde{\Gamma}$ 
relative to $\mathfrak{C}$ and $\tilde{\mathfrak{C}}$. 
Consider the adapted frames ${F}_{\pm}$ and  $\tilde{{F}}_{\pm}$ 
along $f^{\pm}_{\Gamma}$ and 
$ f^{\pm}_{\tilde{\Gamma}}$ relative to $\mathfrak{C}$ and $\tilde{\mathfrak{C}}$ . 
Let $\Phi$ and $\tilde{\Phi}$ be the smooth diffeomorphisms
\[
  \begin{split}
   \Phi: (-1,1)\times \R \ni (s,u)\mapsto \Big(s, u-\int_{0}^{s}\frac{\mu^2+(xx'-yy')^2}{\upsilon^4}dt\Big)\in (0,1)\times \R,\\
     \tilde{\Phi}: (-1,1)\times \R\ni (s,u)\mapsto \Big(s, u -\int_{0}^{s}\frac{\tilde{\mu}^2+(\tilde{x} \tilde{x}'- \tilde{y} \tilde{y}')^2}{\upsilon^4}dt\Big)\in (0,1)\times \R.
   \end{split}
    \]
Then, ${G}={F}_{\pm}\circ \Phi$ and $\tilde{{G}}=\tilde{{F}}_{\pm}\circ \tilde{\Phi}$ are adapted frames along 
$f^{\pm}_{\Gamma}\circ \Phi$ and $f^{\pm}_{\tilde{\Gamma}}\circ \tilde{\Phi}$, respectively. 
From \eqref{s3.2.0.3}
it follows that the 1-forms $\phi= {G}^{-1}{dG}$ and $\tilde{\phi}= \tilde{{G}}^{-1}{d\tilde{G}}$ are is as in 
\eqref{s2.2.3}, with 
$$
  \begin{cases}\phi^2_1=\tilde{\phi}^2_1=0,\\
   \phi^1_0=\tilde{\phi}^1_0=\frac{1}{2}(du-ds),\\  
    \phi^2_0=\tilde{\phi}^2_0=-\frac{1}{2}(du+ds),
     \end{cases}
     $$
and $r=\tilde{r}=0$, $ \alpha=hds$, $ \tilde{\alpha}=\tilde{h}ds$, where $h$ and $\tilde{h}$ are the affine curvatures of $\Gamma$ 
and $\tilde{\Gamma}$. Let $\eta$ be the closed 1-form defined as in \eqref{s2.2.6}, with $\ell=\tilde{h}-h$. 
Then, $\delta=Ad_{G}( \eta)$ is an infinitesimal second order deformation of  $f^{\pm}_{\Gamma}\circ \Phi$. 
It is easily seen that $\delta=\Delta^{-1}\cdot d\Delta$, where $\Delta = \tilde{G}{G}^{-1}$.  By construction, 
  $f^{\pm}_{\tilde{\Gamma}}\circ \tilde{\Phi}=\Delta\cdot f^{\pm}_{\Gamma}\circ \Phi$. 
  This implies that $f^{\pm}_{\tilde{\Gamma}}$ is a second order deformation of $f^{\pm}_{\Gamma}$, 
  that $\Delta\circ  \tilde{\Phi}^{-1}$ is a map realizing the deformation, and that $\Phi\circ   \tilde{\Phi}^{-1}$  
  is the change of parameters.
 \end{proof}
 
 \begin{figure}[ht]
\begin{center}
\includegraphics[height=6cm,width=6cm]{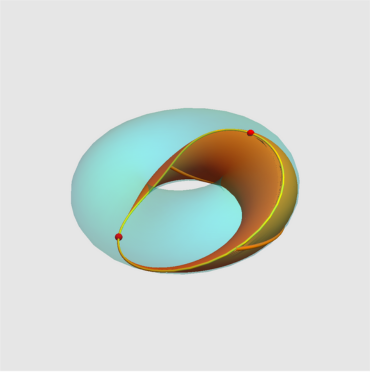}
\includegraphics[height=6cm,width=6cm]{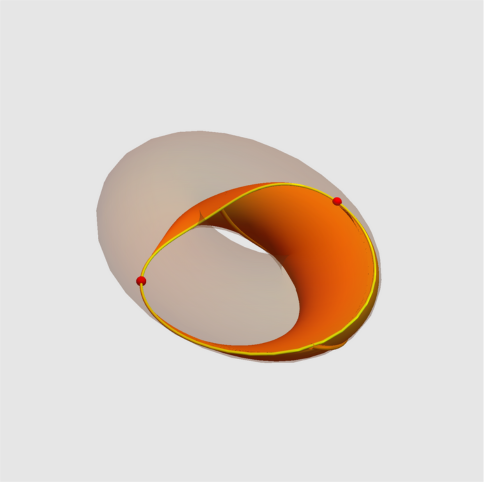}
\caption{\small The positive (on the left) and the negative (on the right) parabolic components of the tube 
along the \``biisotropic parabola" \eqref{bii-parabola}. The yellow arcs are the umbilical curves. The red conical points of the 
parabolic components are the images of the dual maps.
}\label{FIG7}
\end{center}
\end{figure}

\subsection{The normal tubes of conics and homogeneous quasi-umbilical immersions }\label{s3.3}  

An affine plane curve with constant affine curvature $h$ is a conic. 
By our convention (cf. Remark \ref{r:curv-sign}), for constant negative values of $h$ the curve 
is an ellipse, for constant positive values of $h$ it is a connected component of a hyperbola, 
and for $h=0$ it is a parabola. As for the construction of the normal 
tubes it suffices to consider the cases $h=-1$, $h=1$, and $h=0$. 

\subsubsection{The normal tube of a unit circle} 

Consider the ``biisotropic circle"
\begin{equation}\label{bii-circle}
  \Gamma:  \R/2\pi\Z \ni s \mapsto \cos(s)({E}_1-{E}_2)+{E}_3+\sin(s) {E}_4\in \SS.
    \end{equation}
Then
$$
    {\mathrm{R}}(s)=\sin(s){E}_0+\cos(s){E}_1+{E}_3+\frac{1}{2}\sin(s){E}_4
       $$
is an isotropic normal vector field along $\Gamma$. The timelike immersed torus originated by 
$\Gamma$ is
$$
    f:{\mathbb T}=\R^2/2\pi\Z^2 \ni (s,\theta)\mapsto [\cos(\theta)\rm{R}(s)+ {\frac12}\sin(\theta)\Gamma'(s)]\in \EE.
         $$
Each one of the umbilical curves ${\rm C}_{+}$, ${\rm C}_{-}$ is taken by $f$ onto 
the null geodesic ${\mathcal C}=\{[ \sum_{j=0}^{4}x_j {E}_j]\in \EE \mid {x^2=x^3=0}\}$ 
lying in the AdS wall $\partial {\mathcal A}^{\pm}_{{ E}_3}$  (see Figure \ref{FIG4}). 
The parabolic components ${\mathbb T}^{\pm}$ are two cylinders and the exceptional quasi-umbilical immersions  $f^{\pm}_{\Gamma}$ 
are embeddings. The parabolic component $f^{+}_{\Gamma}({\mathbb T}^{+}_{\Gamma})$ is contained in the AdS 
chamber ${\mathcal A}^{+}_{{ E}_3}$ and $f^{-}_{\Gamma}({\mathbb T}^{-}_{\Gamma})$ is contained in ${\mathcal A}^{-}_{{ E}_3}$. 
The central torus $\partial \mathcal{A}|_{(s_*,\theta_*)}$ is the AdS wall of $\Gamma|_s$.  It is tangent to $f( {\mathbb T})$ 
along the coordinate line $s=s_*$. The image of the dual maps $f^{\sharp}_{\pm}$ is the umbilical curve  ${\mathcal C}$ 
and  all central tori pass through ${\mathcal C}$ (see Figure \ref{FIG5}).

\subsubsection{The normal tube of a equilateral hyperbola} 

Consider the ``biisotropic equilateral hyperbola"
\begin{equation}\label{bii-hyperbola}
 \Gamma:  \R \ni s \mapsto \sinh(s)({E}_1-{E}_2)+{E}_3+\cosh(s){E}_4\in \SS.
     \end{equation}
Then
$$
     {\rm{R}}(s)=-\cosh(s){E}_0+\sinh(s){E}_1-{E}_3-\frac{1}{2}{E}_4
       $$
is an isotropic normal vector field along $\Gamma$.  So,  $\Gamma$ originates the timelike embedded cylinder
$$
     f: {\mathbb T}=\R\times \R/2\pi \Z \ni (s,\theta) \mapsto [\cos(\theta)\rm{R}(s)+ {\frac12}\sin(\theta)\Gamma'(s)]\in \EE.
          $$
The umbilical curves ${\rm C}_{+}$ and ${\rm C}_{-}$ are mapped onto two disjoint arcs ${\mathcal C}_{\pm}$ 
of the null geodesic ${\mathcal C}$, the umbilical arcs of $f$. 
The parabolic component $f^{+}_{\Gamma}({\mathbb T}^{+})$ is contained in the negative AdS chamber 
${\mathcal A}^{-}$, while the other one is contained in the positive AdS chamber.  
The images of the dual maps $f^{\sharp}_{\pm}$ are two disjoint open arcs  ${\mathcal C}^{\sharp}_{\pm}$ contained in 
the null geodesic ${\mathcal C}$. They do not intersect the umbilical arcs. Taking the limits of $f(s,\theta)$ as $s\to \pm\infty$,
 we get two disjoint null geodesics ${\mathcal B}^{+}$ and ${\mathcal B}^{-}$ of the AdS wall $\partial {\mathcal A}_{{E}_3}$. 
 They intersect ${\mathcal C}$ into four distinct points. The null geodesic ${\mathcal C}$ is the disjoint union of these four 
 points with the arcs ${\mathcal C}_{\pm}$ and ${\mathcal C}^{\sharp}_{\pm}$. 
 As in the previous case, all central tori of $f$ pass through the null geodesic ${\mathcal C}$ 
(see Figure \ref{FIG6}).

\subsubsection{The normal tube of a parabola} 

Consider the ``biisotropic parabola''
\begin{equation}\label{bii-parabola}
    \Gamma:  \R\ni s  \mapsto s({E}_1-{E}_2)+{E}_3+\frac{s^2}{2}{E}_4\in \SS.
       \end{equation}
The map
$$
  {\mathrm{R}}(s)=-{E}_0+\frac{s(2+s^2)^2}{8(1+s^2)}{E}_1+\frac{s}{8}(4-\frac{s^4}{1+s^2}){E}_2
   +\frac{s^2}{2}{E}_3-\frac{s^4}{8(1+s^2)}{E}_4
      $$
is an isotropic normal vector field along $\Gamma$ and the timelike tautological immersion of the normal tube can be written as
$$
      f: \R\times \R/2\pi\Z \ni (s,\theta)\mapsto [\cos(\theta)\rm{R}(s)+ {\frac12}\sin(\theta)\Gamma'(s)]\in \EE.
         $$
The images of the umbilical curves $f({\rm C}_{+})$ and $f({\rm C}_{-})$ are two adjacent disjoint open arcs of ${\mathcal C}$. Their common 
boundary consits of the two points $[\pm {E}_4]$. The parabolic components $f^{\pm}_{\Gamma}({\mathbb T}^{\pm})$ are 
contained in the AdS chambers ${\mathcal A}^{\pm}_{{E}_3}$ and the dual maps are constants. Also in this case 
the central tori  pass through the null geodesic ${\mathcal C}$ (see Figure \ref{FIG7}).

\subsubsection{Homogeneous quasi-umbilical immersions} 

Let $f:X\to \EE$ be a timelike immersed surface. A  \textit{conformal symmetry} of $f$ is a pair $(\Phi,{B})$ consisting 
of a diffeomorphism $\Phi:X\to X$ and an element ${B}\in \A$, such that $B\cdot (f\circ \Phi)=f$. 
The totality of conformal symmetries of $f$ is a group $G_f$, the group of conformal symmetries of $f$, with 
multiplication given by $(\Phi,{B})\star (\Phi',{B}')=(\Phi'\circ\Phi,{B}\, {B}')$. 
The group $G_f$ acts on $X$
on the left by $(\Phi,{B}){ x}=\Phi({ x})$, for all $(\Phi,{B})\in G_f$ and for all $x\in X$.  
If this action is transitive, the immersion is said to be \textit{homogeneous}.  
For instance, the timelike totally umbilical tori of $\EE$ are homogeneous and have a six-dimensional group of symmetries. 
Other examples are the parabolic components of the tubes of a biisotropic conic of $\SS$. In these cases, 
the symmetry group is isomorphic to a 2-dimensional abelian subgroup of $\A$. 
In \cite{THE}, The showed that a generic quasi-umbilical immersion cannot be homogeneous. It is not difficult to show that 	
\textit{an exceptional quasi-umbilical immersion is homogeneous if and only if it is $\A$-equivalent to a parabolic component of the 
normal tube of a biisotropic conic}.

\section{Quasi-umbilical immersions: the general case}\label{s4}

\subsection{Generic null curves in the neutral space form $\SS$}\label{s4.1} 

\begin{defn}
An immersed null curve $\Gamma: \Lambda\to \SS$ is said to be \textit{generic} if $\langle \Gamma'',\Gamma'' \rangle$ 
is either strictly positive or strictly negative. 
A \textit{line element} is a nowhere zero 1-form $\zeta \in \Omega^1(\Lambda)$, {such that}
$$
     \langle \Gamma'',\Gamma''\rangle=-2\eta, \quad \eta= {-}\sgn \langle \Gamma'',\Gamma'' \rangle=\pm 1.
        $$ 
The line element is unique up to the sign.
\end{defn}

\subsubsection{The left and right normal tubes}

Let $\zeta$ be a line element. 
Differentiating the identities $\langle \Gamma,\Gamma\rangle =1$, $\langle \Gamma', \Gamma'\rangle = 0$ and 
$\langle \Gamma'',\Gamma''\rangle = -2\eta$,  it follows that ${\mathcal R}_1|_{\tau}:=\span\{\Gamma'|_{\tau},\Gamma'''|_{\tau}\}$ 
is a plane of type $(1,1)$ and ${\mathcal R}_2|_{\tau}:=\span\{\Gamma|_{\tau},\Gamma'|_{\tau},\Gamma'''|_{\tau}\}$ is a subspace 
of type $(1,2)$, for every $\tau \in \Lambda$. 
The polar space ${\mathcal R}^{\perp}_2|_{\tau}$ of ${\mathcal R}_2|_{\tau}$ is a  plane of type $(1,1)$ 
such that $\Gamma''|_{\tau}\in {\mathcal R}^{\perp}_2|_{\tau}$, for every $\tau$. 
The polar space ${\mathcal R}^{\perp}_2|_{\tau}$ has a natural orientation: 
a basis $({V},{W})$ of  ${\mathcal R}^{\perp}_2|_{\tau}$ is positive if and only if $-\eta {V}\wedge \Gamma|_{\tau}\wedge\Gamma'|_{\tau}\wedge \Gamma'''|_{\tau}\wedge {W}>0$. 
Let ${\mathcal R}_1$, ${\mathcal R}_2$, and ${\mathcal R}^{\perp}_2$ be  the  vector bundles
\begin{equation}\label{s4.1.a1}\begin{cases}
 \pi_1: {\mathcal R}_1=\{(\tau,{ V})\in \Lambda\times \MM \mid { V}\in {\mathcal R}_1|_{\tau}\}\to \Lambda,\\
   \pi_2: {\mathcal R}_2=\{(\tau,{ V})\in \Lambda\times \MM  \mid { V}\in {\mathcal R}_2|_{\tau}\}\to \Lambda,\\
     \pi^{\perp}_2: {\mathcal R}^{\perp}_2=\{(\tau,{V})\in \Lambda\times \MM  \mid {V}\in {\mathcal R}_2^{\perp}|_{\tau}\}\to \Lambda
       \end{cases}
         \end{equation}
 The \textit{dual} of a null vector ${V}\in {\mathcal R}^{\perp}_2|_{\tau}$ is a null vector ${V}_*\in {\mathcal R}^{\perp}_2|_{\tau}$,
 such that $\langle {V},{V}_*\rangle = -1$. We say that a null vector ${V}\in {\mathcal R}^{\perp}_2|_{\tau}$  is \textit{right-handed} 
  if $({V},{V}_*)$ is a positive basis of ${\mathcal R}^{\perp}_2|_{\tau}$. 
 Right-handed null vectors generate a null line subbundle 
${\mathcal P}$ of ${\mathcal R}^{\perp}_2$. The function  ${\mathcal P}\setminus\{0\}\ni {V}\mapsto \langle {V},\Gamma'' \rangle$ 
is nowhere vanishing. Then there exists a unique trivialization  ${\rm C}_{\lambda}$ of ${\mathcal P}$, such that $\langle {\rm C}_{\lambda},\Gamma''\rangle = 1$. We denote by ${\rm C}_{\varrho}$ the unique dual of ${\rm C}_{\lambda}$, such that $\langle {\rm C}_{\varrho},\Gamma''\rangle = \eta$. By construction, $({\rm C}_{\lambda},{\rm C}_{\varrho})$ is a positive null trivialization of 
${\mathcal R}^{\perp}_2$, such that
\begin{equation}\label{s4.1.1}
 -\eta {\rm C}_{\lambda}\wedge \Gamma\wedge \Gamma'\wedge \Gamma'''\wedge {\rm C}_{\varrho}>0,\quad
   \Gamma''=-(\eta {\rm C}_{\lambda}+ {\rm C}_{\varrho}). 
     \end{equation}

\begin{defn}
Let $\zeta$ be the unique line element, such that the null bivectors ${\rm C}_{\varrho}|_\tau\wedge \Gamma'|_\tau$ are positive, 
for every $\tau\in \Lambda$. 
We call $\zeta$ the \textit{canonical line element} of $\Gamma$. The \textit{left and right isotropic normal tubes} along $\Gamma$ 
are the trivial circle bundles
$$
 \begin{cases}
 \pi_{\lambda}:{\mathbb T}_\lambda=\{(\tau,[{V}])\in \Lambda\times \EE \mid {V}\in [({\rm C}_{\lambda}\wedge \Gamma')|_{\tau}]
\}\to \Lambda,\\
\pi_{\varrho}:{\mathbb T}_{\varrho}=\{(\tau,[{V}])\in \Lambda\times \EE \mid {V}\in  [({\rm C}_{\varrho}\wedge \Gamma')|_{\tau}]
\}\to \Lambda.
     \end{cases}
           $$
           \end{defn}
The elements of 
${\mathbb T}_\lambda$ can be written as $(\tau,[{V}_\lambda(\tau,\theta)])$, where 
\begin{equation}\label{s4.1.2}
       {V}_\lambda(\tau,\theta)=\cos(\theta){\rm C}_{\lambda}|_\tau+\sin(\theta)\Gamma'|_{\tau},
           \end{equation}
while the elements of ${\mathbb T}_{\varrho}$ are of the form $(\tau,[{V}_{\varrho}(\tau,\theta)])$, where
\begin{equation}\label{s4.1.3}
   {V}_{\varrho}(\tau,\theta)=\cos(\theta){\rm C}_{\varrho}|_\tau+\sin(\theta)\Gamma'|_{\tau}.
   \end{equation}
%
The normal tubes do intersect along the curves 
$$
  {\rm C}_{\pm}=\{(\tau, [\pm \Gamma'|_{\tau}]),\tau \in \Lambda\},
     $$
     called \textit{umbilical lines}.
The complementary set of ${\rm C}_{+}\cup {\rm C}_{-}$ has two connected components,
$$
    {\mathbb T}_\lambda^{\pm}=\{(\tau,[{V}_\lambda(\tau,\theta)]) \mid {\sgn}(\cos \theta)=\pm 1\}
      $$
and 
$$
   {\mathbb T}_{\varrho}^{\pm}=\{(\tau,[{V}_{\varrho}(\tau,\theta)])\mid {\sgn}(\cos \theta)=\pm 1\}.
     $$

\begin{defn}
We call ${\mathbb T}_\lambda^{\pm}$ and ${\mathbb T}_{\varrho}^{\pm}$ the \textit{parabolic components} 
of the normal tubes. The map
$$
J_{\lambda}^{\varrho}:  {\mathbb T}_\lambda\ni (\tau,[{V}_\lambda(\tau,\theta)]) \mapsto (\tau,[{V}_{\varrho}(\tau,\theta)])\in {\mathbb T}_{\varrho},
    $$
is the \textit{intertwining diffeomorphism} between the left and right normal tubes. The inverse 
diffeomorphism $(J_{\lambda}^{\varrho})^{-1}$ 
will be denoted by 
$J_{\varrho}^{\lambda}$.
\end{defn}

\begin{defn}
The left and right normal tubes can be mapped into the Einstein universe by means of the \textit{left and right tautological maps}
$$
   f_{\lambda}: {\mathbb T}_\lambda\ni (\tau,[{V}]) \mapsto [{V}]\in \EE,     \quad 
     f_{\varrho}: {\mathbb T}_{\varrho}\ni (\tau,[{V}]) \mapsto [{V}]\in \EE.
         $$
The restrictions of the tautological maps to the parabolic components are denoted by $f^{\pm}_{\lambda}$ and by 
$f^{\pm}_{\varrho}$, respectively.
\end{defn}

 \begin{figure}[ht]
\begin{center}
\includegraphics[height=6cm,width=6cm]{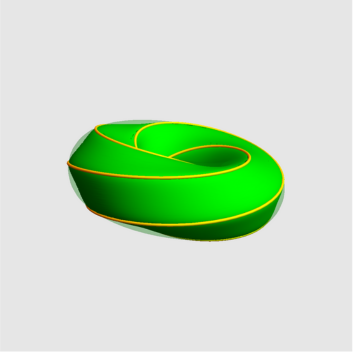}
\includegraphics[height=6cm,width=6cm]{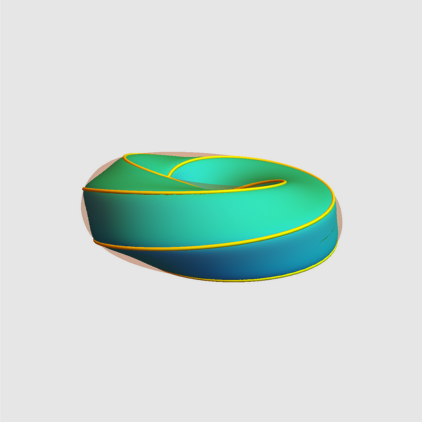}
\caption{\small The left normal tube along a generic null 
curve with constant curvatures ($m=3$ 
and $n=5$). On the left, the positive parabolic component; on the right, the negative parabolic component. 
The yellow line is the umbilical curve of the surface. The umbilical points are double points. The parabolic 
components are embedded cylinders.
}\label{FIG8}
\end{center}
\end{figure}

 \begin{figure}[ht]
\begin{center}
\includegraphics[height=6cm,width=6cm]{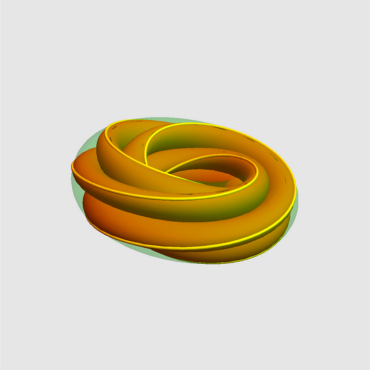}
\includegraphics[height=6cm,width=6cm]{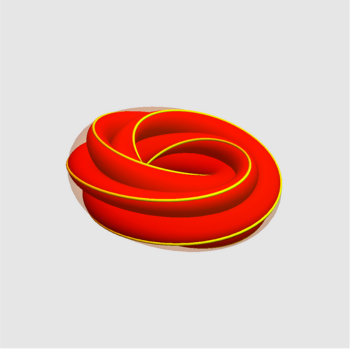}
\caption{\small The right normal tube along a generic null 
curve with constant curvatures 
($m=3$ and $n=5$). On the left, the positive parabolic component; on the right, the 
negative parabolic component. The yellow line is the umbilical curve of the surface. In this case 
the parabolic components are immersed but not embedded cylinders.
}\label{FIG9}
\end{center}
\end{figure}

\subsection{Structure of quasi-umbilical immersions of general type}\label{s4.2}  

With reference to the constructions on generic null curves in $\SS$ developed in Section \ref{s4.1}, we 
can now describe the geometric structure of general quasi-umbilical immersions.
The following is the main result of this section.

\begin{thmx}\label{thmC}
Let $\Gamma : \Lambda \to \SS$ be a generic null immersion. 
\vskip0.1cm
 $(1)$ The tautological map $f_{\lambda}$ is a timelike immersion with 
umbilic locus ${\rm C}_{+}\cup {\rm C}_{-}$ and conformal Gauss map ${\mathcal N}_{f_\lambda}=\Gamma\circ \pi_{\lambda}$. 
The restrictions $f^{\pm}_{\lambda}$ of $f_{\lambda}$ to the parabolic components ${\mathbb T}_\lambda^{\pm}$ 
of the left normal tube ${\mathbb T}_\lambda$
are quasi-umbilical immersions 
of general type with helicity $\varepsilon = \pm1$ and 
dual maps $(f^{\pm}_{\lambda})^{\sharp}=f^{\pm}_{\varrho}\circ J^{\varrho}_{\lambda}$. 
\vskip0.1cm
 $(2)$ The tautological map $f_{\varrho}$ is a timelike immersion with 
umbilic locus ${\rm C}_{+}\cup {\rm C}_{-}$ and conformal Gauss map ${\mathcal N}_{f_\varrho}=\Gamma\circ \pi_{\varrho}$. 
The restrictions  $f^{\pm}_{\varrho}$ of $f_{\varrho}$ to the parabolic components ${\mathbb T}_\varrho^{\pm}$ 
of the right normal tube ${\mathbb T}_\varrho$ are quasi-umbilical immersions of general 
type with helicity $\varepsilon = \pm1$ and 
dual maps $(f^{\pm}_{\varrho})^{\sharp}=f^{\pm}_{\lambda}\circ J_{\varrho}^{\lambda}$. 

Conversely, let $f:X\to \EE$ be a regular quasi-umbilical 
immersion of general type. Then $f(X)$ is contained in one of the two parabolic components 
$f^{\pm}_{\lambda}({\mathbb T}_{\lambda}^{\pm})$ of the tautological immersion originated by the directrix curve of $f$.
\end{thmx}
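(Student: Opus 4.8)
The plan is to follow the pattern established for the exceptional case in Theorem~\ref{thmB}, with the biisotropic directrix and its single isotropic normal replaced by a generic null curve $\Gamma$ and the canonical null pair $({\rm C}_\lambda,{\rm C}_\varrho)$ of $\mathcal{R}_2^\perp$ from Section~\ref{s4.1}. For items $(1)$ and $(2)$ I would first fix the canonical line element $\zeta$ of $\Gamma$ and the null trivialization $({\rm C}_\lambda,{\rm C}_\varrho)$ of $\mathcal{R}_2^\perp$ normalized as in \eqref{s4.1.1}, and differentiate the scalar–product relations $\langle\Gamma,\Gamma\rangle=1$, $\langle\Gamma',\Gamma'\rangle=0$, $\langle\Gamma'',\Gamma''\rangle=-2\eta$, $\langle{\rm C}_\lambda,\Gamma''\rangle=1$, $\langle{\rm C}_\varrho,\Gamma''\rangle=\eta$, $\langle{\rm C}_\lambda,{\rm C}_\varrho\rangle=-1$ to obtain the Frenet-type system for $(\Gamma,\Gamma',{\rm C}_\lambda,{\rm C}_\varrho,\dots)$ whose off-diagonal coefficients are the left and right curvatures $\kappa_\lambda,\kappa_\varrho$. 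Next I would lift $f_\lambda$ by ${\rm T}(\tau,\theta)=V_\lambda(\tau,\theta)=\cos\theta\,{\rm C}_\lambda|_\tau+\sin\theta\,\Gamma'|_\tau$ as in \eqref{s4.1.2}; since ${\rm C}_\lambda\in\mathcal{R}_2^\perp$ and $\Gamma'\in\mathcal{R}_1\subset\mathcal{R}_2$ are mutually orthogonal null vectors, $\langle V_\lambda,V_\lambda\rangle=0$, so ${\rm T}$ is a genuine lift, and computing $d{\rm T}$ by means of the Frenet system will show that $\langle d{\rm T},d{\rm T}\rangle$ is a nondegenerate Lorentzian quadratic form on all of $\mathbb{T}_\lambda$, i.e.\ $f_\lambda$ is a timelike immersion. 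Over a parabolic component $\mathbb{T}_\lambda^\pm$ I would set $u=\tan\theta$, take the lift $F_0^\pm=\pm({\rm C}_\lambda+u\,\Gamma')\circ\pi_\lambda$, and complete it to an adapted frame by $F_3^\pm=\Gamma\circ\pi_\lambda$, $F_4^\pm=\pm({\rm C}_\varrho+u\,\Gamma')\circ\pi_\lambda$ (so that $\langle F_0^\pm,F_4^\pm\rangle=-1$ and $[F_4^\pm]=f_\varrho^\pm\circ J_\lambda^\varrho$), and $F_1^\pm,F_2^\pm$ the combinations of $\partial_1F_0^\pm,\partial_2F_0^\pm$ with respect to the coframe $(\zeta,du)$ prescribed by the adapted-frame normalization, exactly as in \eqref{s3.2.0.2}. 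It then remains to check that $\phi_\pm=F_\pm^{-1}dF_\pm$ is of the form \eqref{s2.2.3} with helicity $\pm1$ and $r=\eta/2$; here $r=\eta/2\neq0$ is automatic from the general identity $\langle\Gamma''\circ\pi_\lambda,\Gamma''\circ\pi_\lambda\rangle=-4r$ and the normalization $\langle\Gamma'',\Gamma''\rangle=-2\eta$, and by the identity $\langle d\NN,d\df\rangle=-2\varepsilon\varrho\, r\,(\tilde\zeta)^2$ from the proof of Proposition~\ref{prop:except-biisotropic} it shows that $f_\lambda^\pm$ is of general (not exceptional) type. From the frame one reads off ${\mathcal N}_{f_\lambda^\pm}=\Gamma\circ\pi_\lambda$ and the dual $[F_4^\pm]=f_\varrho^\pm\circ J_\lambda^\varrho$; since $\mathbb{T}_\lambda^+\cup\mathbb{T}_\lambda^-$ is dense, ${\mathcal N}_{f_\lambda}=\Gamma\circ\pi_\lambda$ on $\mathbb{T}_\lambda$ by continuity, and the identity $(d{\mathcal N}_{f_\lambda}\wedge{\rm T})|_{(\tau,\theta)}=\cos\theta\,({\rm C}_\lambda\wedge\Gamma')|_\tau\,\zeta$ identifies the umbilic locus as ${\rm C}_+\cup{\rm C}_-$. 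Part $(2)$ is proved by the same computation with ${\rm C}_\lambda$ and ${\rm C}_\varrho$ interchanged, the only modification being the normalization $\langle{\rm C}_\varrho,\Gamma''\rangle=\eta$.

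For the converse, let $f:X\to\EE$ be a regular quasi-umbilical immersion of general type with directrix $\Gamma:\Lambda\to\SS$. Along any adapted frame $F$ of $f$ I would differentiate $F_3=\Gamma\circ\pi$ twice and use \eqref{s2.2.3} — as in the proof of Proposition~\ref{prop:except-biisotropic} — to get $\Gamma'\circ\pi=\varepsilon(F_1-F_2)$, $\Gamma''\circ\pi=-\varepsilon(2rF_0+s(F_1-F_2)+F_4)$, and $\langle\Gamma''\circ\pi,\Gamma''\circ\pi\rangle=-4r$, where $\phi^2_1=s\,\tilde\zeta$; since $f$ is of general type, $\langle d\NN,d\df\rangle=-2\varepsilon\varrho\, r\,(\tilde\zeta)^2$ is a nowhere-zero, definite-signed multiple of $r$, so $r$ is nowhere zero, $\Gamma$ is generic, and $\eta=-\sgn\langle\Gamma'',\Gamma''\rangle=\sgn r$. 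I would then pick the canonical line element $\zeta$ of $\Gamma$ and, following the proof of Theorem~\ref{thmA}, the essentially unique adapted frame $F$ of $f$ with $\phi^1_0+\phi^2_0=\pi^*(\zeta)=\tilde\zeta$, so that $r\equiv\eta/2$ and $\Gamma''\circ\pi=-\varepsilon(\eta F_0+s(F_1-F_2)+F_4)$. A further differentiation gives $\Gamma'''\circ\pi$ explicitly; from $\Gamma,\Gamma',\Gamma'',\Gamma'''$ I would then compute $\mathcal{R}_2^\perp\circ\pi=\span\{{\rm C}_\lambda,{\rm C}_\varrho\}\circ\pi$ and show that the canonical null vector is
\[
  {\rm C}_\lambda\circ\pi=\varepsilon F_0+\tfrac{3}{2}\varepsilon\eta\,s\,(F_1-F_2),
\]
the normalization $\langle{\rm C}_\lambda,\Gamma''\rangle=1$ together with the handedness and orientation conventions of Section~\ref{s4.1} pinning down this combination and forcing the \emph{left} tube. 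Hence $\span\{{\rm C}_\lambda,\Gamma'\}\circ\pi=\span\{F_0,F_1-F_2\}$, so $F_0(x)\in[({\rm C}_\lambda\wedge\Gamma')|_{\pi(x)}]$ for every $x\in X$, i.e.\ $(\pi(x),f(x))\in\mathbb{T}_\lambda$ and $f(x)=f_\lambda(\pi(x),f(x))$. Because $f$ has no umbilical points, $F_0$ is nowhere proportional to $\Gamma'\circ\pi$, so $(\pi(x),f(x))$ lies in $\mathbb{T}_\lambda^+$ or in $\mathbb{T}_\lambda^-$; connectedness of $X$ and continuity of $\sgn\cos\theta$ then give $f(X)\subset f_\lambda^+(\mathbb{T}_\lambda^+)$ or $f(X)\subset f_\lambda^-(\mathbb{T}_\lambda^-)$.

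The main obstacle will be the identification of $({\rm C}_\lambda,{\rm C}_\varrho)$ in the converse: it requires carrying the differentiation of $F_3=\Gamma\circ\pi$ to third order and then tracking, without sign errors, the helicity $\varepsilon$, the causal type $\eta$, the orientation of $\mathcal{R}_2^\perp$, and the right/left-handedness of null vectors, so that one lands in the left tube $\mathbb{T}_\lambda$ — and in the parabolic component selected by $\varepsilon$ — rather than in $\mathbb{T}_\varrho$. In the direct direction the analogous, but purely computational, point is the verification that the explicit quintuple $(F_0^\pm,\dots,F_4^\pm)$ obeys the adapted-frame equations \eqref{2.1ada}, which is what keeps the resulting immersion genuinely of general type.
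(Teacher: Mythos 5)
Your strategy coincides with the paper's: derive the Frenet-type system \eqref{fls} for $({\rm C}_{\lambda},B_1,B_2,\Gamma,{\rm C}_{\varrho})$ by differentiating the normalization relations, lift $f_{\lambda}$ by ${\rm T}_{\lambda}=\cos\theta\,{\rm C}_{\lambda}+\sin\theta\,\Gamma'$ and read off the timelike property from $\langle d{\rm T}_{\lambda},d{\rm T}_{\lambda}\rangle=-2\zeta\,d\theta+\cdots$, build explicit adapted frames on the parabolic components in the variable $u=\tan\theta$, get the umbilic locus from the $\cos\theta$ factor, and, for the converse, differentiate $F_3=\Gamma\circ\pi$ to third order to identify ${\mathcal R}_2^{\perp}$ and conclude $F_0=\pm\varepsilon\,{\rm C}_{\lambda}\circ\pi+(\text{multiple of }\Gamma'\circ\pi)$. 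The architecture is the paper's.

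One step would fail as written. You set $F_4^{\pm}=\pm({\rm C}_{\varrho}+u\,\Gamma')\circ\pi_{\lambda}$ ``so that $[F_4^{\pm}]=f_{\varrho}^{\pm}\circ J_{\lambda}^{\varrho}$'', but this is circular: the dual map is \emph{defined} as $[F_4]$ of an adapted frame, so $F_4$ must be forced by the conditions \eqref{2.1ada}, not prescribed to match the conclusion. The normalizations you actually check ($\langle F_0^{\pm},F_4^{\pm}\rangle=-1$ and $\phi^3_4=0$) do not determine the $\Gamma'$-component of $F_4$, because $\Gamma'$ is null and orthogonal to $F_0^{\pm}$, to $\Gamma$ and to ${\rm C}_{\varrho}$; that component is fixed only by $\langle F_1^{\pm},F_4^{\pm}\rangle=\langle F_2^{\pm},F_4^{\pm}\rangle=0$ once $F_1^{\pm},F_2^{\pm}$ are pinned down by the second-order and $\phi^0_0=2\phi^2_1$ conditions. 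Carrying this out, as in the paper's computation leading to \eqref{s4.2.1}, gives $F_4^{\pm}=\pm\bigl({\rm C}_{\varrho}-\tfrac{\eta}{3}\tan\theta\,\Gamma'\bigr)$, a \emph{different} ray of the same fibre of ${\mathbb T}_{\varrho}$; the geometric conclusion (the dual surface is the corresponding parabolic component of the right tube, and $\langle d\NN,d\df\rangle=\mp\eta\zeta^2\neq 0$, so the immersion is of general type) survives, but your pointwise identification of the dual does not, and the quintuple you wrote down will not pass the adapted-frame check you yourself flag as the decisive verification. A second, smaller point: in the converse you argue that ``general type'' makes $\langle d\NN,d\df\rangle$ a nowhere-zero multiple of $r$, hence $r$ nowhere zero; the definition only gives $r\not\equiv 0$, so the step from ``not identically zero'' to ``nowhere zero'' (needed for $\Gamma$ to be generic) should be stated as a standing constancy-of-type hypothesis rather than deduced.
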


\begin{proof}
Let $\Gamma:\Lambda\to \SS$ be a generic null immersion, $\zeta$ be its canonical line element, and let ${B}:\Lambda\to \A$ 
be the map defined by
$$
   {B}_0={\rm C}_{\lambda},\quad { B}_1-{ B}_2=\Gamma',\quad  2{ B}_1\wedge  { B}_2
    =-\eta\Gamma'\wedge \Gamma''',\quad  {B_3 = \Gamma}, \quad
       {B}_4={\rm C}_{\varrho}.
         $$
First, we claim that ${B}$ is a solution of the linear system
\begin{equation}\label{fls}{B}^{-1}{B}'= \left(
           \begin{array}{ccccc}
             0&-\frac{\eta}{2} +\kappa_{\varrho} & \frac{\eta}{2} +\kappa_{\varrho} &0&0 \\
             \frac{1}{2} -\kappa_{\lambda}& 0 & 0&1& \frac{\eta}{2} -\kappa_{\varrho} \\
                \frac{1}{2} +\kappa_{\lambda}& 0 & 0&-1&  \frac{\eta}{2} +\kappa_{\varrho} \\
                0&1&1&0&0\\
             0 &- \frac{1}{2} +\kappa_{\lambda} &  \frac{1}{2} +\kappa_{\lambda}&0&0 \\
           \end{array}
         \right),
              \end{equation}
where $\kappa_{\lambda}$ and $\kappa_{\varrho}$ are smooth functions, the \textit{left and right curvatures} 
of $\Gamma$.
Denote by $b^i_j$, $0\le i, j\le 4$, the components of the $\aa$-valued map ${ B}^{-1}{B}'$. 
Differentiating ${B}_3=\Gamma$, recalling that $\Gamma'={B}_1-{B}_2$, we have
\begin{equation}\label{l1.s4.1.1}
      b^1_3 =-b^2_3 =1,\quad b^0_3 = b^4_3=0,
        \end{equation}
and hence $b^3_1 = b^3_2 =1$, ${b^3_4} = b^3_0=0$, 
by the symmetry relations between the $b^i_j$.
Observe that, since ${B}_0={\rm C}_{\lambda}$ and ${B}_4={\rm C}_{\varrho}$ 
are orthogonal to $\Gamma$ and $\Gamma'$, differentiating $\langle {B}_0,\Gamma\rangle = \langle {B}_4,\Gamma\rangle=0$, 
we obtain $\langle {B}'_0,\Gamma\rangle = \langle {B}'_4,\Gamma\rangle=0$, i.e., $b^3_0=b^3_4=0$.
Differentiating ${B}_1-{B}_2=\Gamma'$, taking into account that
$$
        \Gamma''=-(\eta{\rm C}_{\lambda}+{\rm C}_{\varrho})=-(\eta{B}_0  + {B}_4),
          $$
yields
\begin{equation}\label{l1.s4.1.3}
       b^2_1=0,\quad b^1_4+b_4^2=\eta,\quad b^1_0+b^2_0=1.
           \end{equation}
This implies the existence of two smooth functions $\kappa_{\lambda}$ and $\kappa_{\varrho}$, such that
\begin{equation}\label{l1.s4.1.4}
    b^1_0=\frac{1}{2}-\kappa_{\lambda},\quad b^2_0=\frac{1}{2}+\kappa_{\lambda},\quad 
     b^1_4=\frac{\eta}{2}-\kappa_{\varrho},\quad b^2_4=\frac{\eta}{2}+\kappa_{\varrho}.
           \end{equation}
The third derivative of $\Gamma$ is a cross section of the bundle spanned by ${B}_1$ and ${B}_2$. Then, 
differentiating $\langle \Gamma'',{\rm C}_{\lambda}\rangle = 1$, we find $\langle \Gamma'',{B}'_0\rangle=0$,
and hence, $b^0_0=0$. This together with
\eqref{l1.s4.1.1}, 
\eqref{l1.s4.1.3} and \eqref{l1.s4.1.4} proves the claim.
\vskip0.1cm
Next, we identify the normal tubes with $\Lambda\times \R/2\pi\Z$ as in 
\eqref{s4.1.2} and \eqref{s4.1.3}. Given a map $\Psi: \Lambda\times \R/2\pi\Z\to \R^k$, we set
$d\Psi=\partial_{\zeta}\Psi\zeta+\partial_{\theta}\Psi d\theta$. Let us consider the lift of $f_{\lambda}$ defined by
${\rm T}_{\lambda}=(\cos\theta){B}_0+(\sin\theta)({B}_1-{B}_2)$. From \eqref{fls}, we compute
$$
\langle d{\mathrm{T}}_{\lambda},d{\mathrm{T}}_{\lambda}\rangle 
   =-2\zeta\big(d\theta+(\eta \sin^2\theta - \kappa_{\lambda} \cos^2\theta)\zeta\big).
     $$
This shows that  $f_{\lambda}$ is a timelike immersion. Let ${F}^{\pm}:{\mathbb T}_{\lambda}^{\pm}\to \A$ 
be the frame fields along $f_{\lambda}$ defined by
$$
\begin{cases}
{F}^{\pm}_0=\pm\big({ B}_0+ (\tan\theta)({ B}_1-{ B}_2)\big),\\
{F}^{\pm}_1=\pm\big((\tan\theta)(\frac{\eta}{3}{ B}_0-{ B}_4)+{ B}_1+\frac{\eta}{3}(\tan\theta)^2({ B}_1-{ B}_2)\big),\\
{F}^{\pm}_2=\pm\big((\tan\theta)(\frac{\eta}{3}{ B}_0-{ B}_4)+{ B}_2+\frac{\eta}{3}(\tan\theta)^2({ B}_1-{ B}_2)\big),\\
{F}^{\pm}_3= B_3 = \Gamma,\\
{F}^{\pm}_4=
{\pm \big(B_4 - \frac{\eta}{3}(\tan\theta)({ B}_1-{ B}_2)\big).}
   \end{cases}
    $$
Using \eqref{fls} one computes the $\aa$-valued 1-forms $({ F}^{\pm})^{-1}{d F}^{\pm}$, which yield
\begin{equation}\label{s4.2.1}
\begin{cases}
\phi_0^3=\phi^0_3=\phi^4_3=\phi^3_4=0,\\ 
\phi^0_0=2\phi^2_1=-\frac{4\eta}{3}(\tan\theta)\zeta,\quad \phi_1^3=\phi_2^3=\pm \zeta\\
\phi^1_0=(\frac{1}{2}-\kappa_{\lambda}+\eta(\tan\theta)^2)\zeta +(\sec\theta)^2d\theta,\\
\phi^2_0=(\frac{1}{2}+\kappa_{\lambda}-\eta(\tan\theta)^2)\zeta -(\sec\theta)^2d\theta,\\
\phi^1_4=\frac{\eta}{18}(9-18\eta\kappa_{\varrho}+2 {\eta}(\tan\theta)^2)\zeta-\frac{\eta}{3}(\sec\theta)^2d\theta,\\
\phi^2_4=\frac{\eta}{18}(9+18\eta\kappa_{\varrho}-2 {\eta}(\tan\theta)^2)\zeta+\frac{\eta}{3}(\sec\theta)^2d\theta.
\end{cases}
   \end{equation}
Then, ${ F}^{\pm}$ are adapted frame fields along $f^{\pm}_{\lambda}$. This implies 
$$
     {\mathcal N}_{f^{\pm}_{\lambda}}(\tau,\theta)=\Gamma(\theta),\quad (f^{\pm}_{\lambda})^{\sharp}(\tau,\theta)=
      [{ F}^{\pm}_4(\tau,\theta)]= f^{\pm}_{\varrho}\circ J^{\varrho}_{\lambda}(\tau,\theta).
      $$
Since $\langle d{\mathcal N}_{f^{\pm}_{\lambda}},d{ F}^{\pm}_4\rangle = {\mp\eta \zeta^2}$, we infer that  $f^{\pm}_{\lambda}$ 
are of general type, with conformal Gauss map $\Gamma\circ \pi_{\Lambda}$, helicity $\varepsilon =\pm 1$ and dual map $(f^{\pm}_{\lambda})^{\sharp}=f^{\pm}_{\varrho}\circ J_{\varrho}^{\lambda}$. 
By continuity, ${\mathcal N}_{f_\lambda}=\Gamma\circ \pi_{\Lambda}$. 
From \eqref{fls}, we have
$$
     {d}{\mathcal N}_{f_\lambda}=\zeta({B}_1-{B}_2)
        $$
and
\[
\begin{split} 
 d{\rm T}_{\lambda} &=\Big(\cos(\theta)\big(\frac{1}{2}({ B}_1+{ B}_2)-\kappa_{\lambda}({ B}_1-{ B}_2)- 
  \sin(\theta)(\eta{ B}_0+{ B}_4)\big)\Big)\zeta \\
 &\qquad + \big( -\sin(\theta){ B}_0+\cos(\theta)({ B}_1-{ B}_2)\big)d\theta.
   \end{split}
    \]
Then,
$$
      \langle {d}{\mathcal N}_{f_\lambda},d{\rm T}_{\lambda}\rangle = -\cos(\theta)\zeta^2.
         $$
This proves that ${\rm C}_{+}\cup {\rm C}_{-}$ is the umbilic locus of $f_\lambda$.
{This concludes the proof of assertion (1). Similarly, one can prove assertion (2).}

\vskip0.1cm
 Conversely, let $f:X\to \EE$ be a regular quasi-umbilical immersion of general type and let $\Gamma:\Lambda\to \SS$ 
 be its directrix curve. Since $f$ is of general type, there exists a unique adapted frame field ${ F}:X\to \A$ along $f$,
  such that {$\phi^1_4+\phi^2_4=\frac{\eta}{2}(\phi^1_0+\phi^2_0)$}, $\eta = \pm1$. 
  Then, the 1-form $\phi={ F}^{-1}{d F}$ is as in \eqref{s2.2.3} 
  with $r=\eta$ and {$\tilde{\zeta}=\phi^1_0-\phi^2_0$}. Let  $\zeta$ be a nowhere zero 1-form on $\Lambda$.
  Then $\phi^1_0+\phi^2_0=\pm e^{u}\pi_{\Lambda}^*(\zeta)$, where $u:X\to \R$ is a smooth function.  
From
\begin{equation}\label{s4.2.2}
  {F}_3=\Gamma\circ \pi_{\Lambda}\quad d{F}_3=\varepsilon(\phi^1_0+\phi^2_0)({F}_1-{F}_2)
     \end{equation}
we see that $\Gamma$ is a null curve. Differentiating \eqref{s4.2.2}, using \eqref{s2.2.3}, we obtain
\begin{equation}\label{s4.2.3}
     \pi_{\Lambda}^*(\Gamma'' \zeta)=\pm \varepsilon\Big( e^{u}(d u-\phi^2_1)({F}_1-{F}_2)-
             e^{2u}(\eta { F}_0+{ F}_4) \zeta\Big).
             \end{equation}
Then, $\langle \Gamma'' \circ \pi_{\Lambda},\Gamma'' \circ \pi_{\Lambda}\rangle = {-2\eta e^{4 u}}$. 
This implies that $\Gamma$ is a generic null curve. So, we may assume that $\zeta$ is the canonical line element of $\Gamma$. 
Then, $\langle \Gamma'',\Gamma''\rangle = -2\eta$. Hence, $u =0$ and, using  \eqref{s4.2.3}, 
we infer that $\phi^2_1={\rm p}\pi_{\Lambda}^*(\zeta)$, where ${\rm p}:X\to \R$ is a smooth function. 
Hence the equation \eqref{s4.2.3} can be written as
\begin{equation}\label{s4.2.4}
  \Gamma'' \circ \pi_{\Lambda}=\mp \varepsilon\left(\mathrm{p}\,\Gamma'\circ \pi_{\Lambda}
    +(\eta { F}_0+{ F}_4)\right).
    \end{equation}
 Differentiating \eqref{s4.2.4}, we obtain
\begin{equation}\label{s4.2.5} 
  \Gamma''' \circ \pi_{\Lambda} = \mp\varepsilon\big(\mathrm{p}(\eta {F}_0\mp 3{ F}_4)
   +\frac{\eta}{2}(3{ F}_1+{ F}_2)\big).
    \end{equation}
 Then, retaining the notation of \eqref{s4.1.a1} and using \eqref{s4.2.5}, we deduce that the vector bundle 
 $\pi_{\Lambda}^*({\mathcal R}_2^{\perp})$ is spanned by the null vector fields 
  \begin{equation}\label{s4.2.6} 
    { A}={ F}_0 \pm \frac{3}{2}\eta\mathrm{p}({ F}_1-{ F}_2),\quad { B}={ F}_4 
     - \frac{1}{2}\eta{\rm p}({ F}_1-{ F}_2).
     \end{equation}
 From \eqref{s4.2.5} it follows that
   \begin{equation}\label{s4.2.7}
    -\eta { A}\wedge( \Gamma\wedge \Gamma'\wedge \Gamma''')\circ \pi_{\Lambda}\wedge {B} 
       =  2{ F}_0\wedge { F}_1\wedge { F}_2\wedge { F}_3\wedge { F}_4 >0. 
         \end{equation}
By \eqref{s4.2.6}, we have $\langle { A},{ B}\rangle = -1$ and 
$\langle { A},{ A}\rangle = \langle { B},{ B}\rangle = 0$. 
Then, \eqref{s4.2.7} implies that ${ A}$ is a global trivialization of 
$\pi_{\Lambda}^*({\mathcal P})$, 
where ${\mathcal P}$ is the line bundle generated by the right-handed null vectors of ${\mathcal R}_2^{\perp}$. 
From \eqref{s4.2.4} and \eqref{s4.2.6}, we obtain $\langle { A},\Gamma'' \circ \pi_{\Lambda}\rangle = \pm\varepsilon$. 
This implies ${ A}=\pm\varepsilon \rm{C}_{\lambda}\circ \pi_{\Lambda}$. By  \eqref{s4.2.6}, we have
 $$
     {F}_0 = \pm\varepsilon \rm{C}_{\lambda}\circ \pi_{\Lambda}\mp \frac{3}{2}\eta{\rm p}\Gamma'\circ \pi_{\Lambda}.
         $$
Then, $f(X)\subseteq f^{+}_{\lambda}({\mathbb T}^+)$ if $\pm\varepsilon=1$ and $f(X)\subseteq f^{-}_{\lambda}({\mathbb T}^-)$ 
if $\pm \varepsilon=-1$. This concludes the proof.  \end{proof}


\begin{remark}\label{r:def-gen-type}
{Let $\Gamma$, 
$\tilde{\Gamma}: \Lambda \to \SS$ 
be two generic null curves, defined on the same open interval $\Lambda$. Suppose that $\Gamma$, 
$\tilde{\Gamma}$ have the same canonical line elements. 
Applying similar arguments as those in the proof of Proposition \ref{pr1.s3.2}, 
we can prove that $f^{\pm}_{\lambda}$ and $ {f}^{\pm}_{\tilde\lambda}$ (respectively, $f^{\pm}_{\varrho}$ and ${f}^{\pm}_{\tilde\varrho}$)  
are second order conformal deformations of each other (with fixed parameters) if and only if the left curvatures (respectively, 
the right curvatures) of $\Gamma$ and $\tilde\Gamma$ coincide.}

\end{remark}

 \begin{figure}[ht]
\begin{center}
\includegraphics[height=6cm,width=6cm]{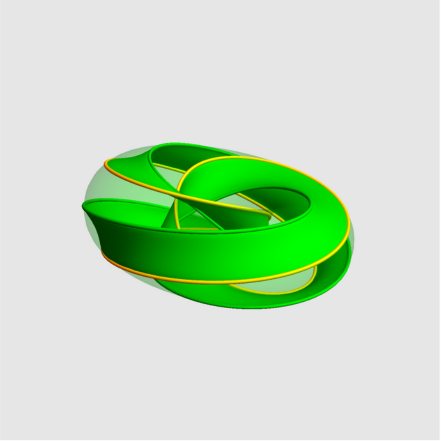}
\includegraphics[height=6cm,width=6cm]{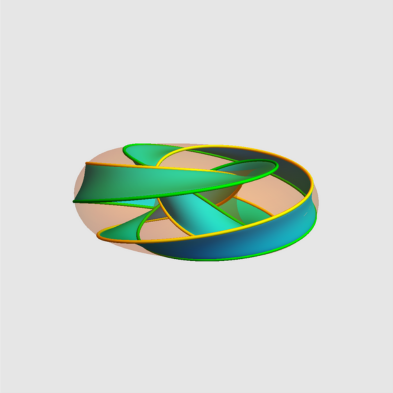}
\caption{\small The left normal tube along a generic null 
curve with constant curvatures 
($m=2$ and $n=3$). On the left, the positive parabolic component; on the right, the negative parabolic component. 
The yellow lines are the umbilical curves of the surface. In this example the tautological map 
is an embedding.
}\label{FIG10}
\end{center}
\end{figure}

 \begin{figure}[ht]
\begin{center}
\includegraphics[height=6cm,width=6cm]{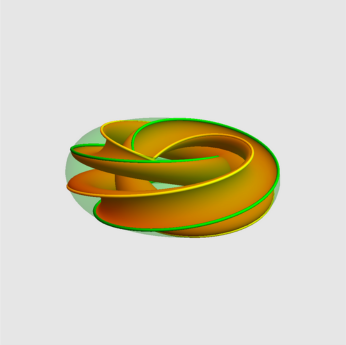}
\includegraphics[height=6cm,width=6cm]{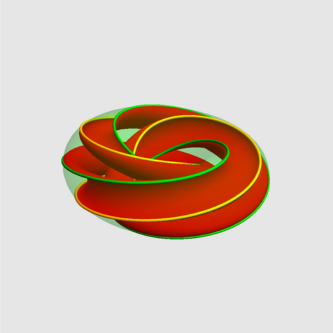}
\caption{\small The right normal tube along a generic null 
curve with constant curvatures 
($m=2$ and $n=3$). On the left, the positive parabolic component; on the right, the negative 
parabolic component. The yellow lines are the umbilical curves of the surface. 
In this example the tautological map is an embedding.
}\label{FIG11}
\end{center}
\end{figure}

\subsection{Examples}\label{s4.3} 

We illustrate our construction on a class of simple examples. The computations are carried out in the coordinates 
of $\MM$ relative to the standard pseudo-orthogonal basis $\mathfrak{P}=(P_0,\dots,P_4)$ of $\MM$. 
The details and the intermediate steps will be omitted either
because elementary or because they can be 
carried out with the help of symbolic computational systems such as 
Mathematica, Maple or Matlab. 
Let $r\in (0,1)$ and let $m, n$ be two relatively prime integers such that $n>m>0$. 
Consider
$$
 {W}(s)=
  \left(m\cos(ns),m\sin(ns),n\cos(ms),n\sin(ms),\frac{nr}{\sqrt{1-r^2}}\right).
     $$
Then 
 $$
   { \Gamma(s): = \frac{W(s)}{\sqrt{\langle {W}(s),{W}(s)\rangle}} }\in \SS
      $$
defines a generic null curve of negative type (i.e. $\langle\Gamma'',\Gamma''\rangle <0$) with constant left and right curvatures.
Thus $\Gamma$ is a homogeneous null curve of $\SS$.
The isotropic normal vector fields along $\Gamma$ take the form
\begin{eqnarray*}
  {\rm C}_{\lambda} &=&\upsilon_{\lambda}\left(c_{\lambda}^0 \cos(ns),c_{\lambda}^1\sin(ns),
         c_{\lambda}^2\cos(m s),c_{\lambda}^3\sin(m s),c_{\lambda}^4\right),
 \\
     {\rm C}_{\varrho} &=&\upsilon_{\varrho}\left(c_{\varrho}^0 \cos(ns),c_{\varrho}^1\sin(ns),
          c_{\varrho}^2\cos(m s),c_{\varrho}^3\sin(m s),c_{\varrho}^4\right),
          \end{eqnarray*}
where $\upsilon_{\lambda}$, $\upsilon_{\varrho}$, $c_{\lambda}^j$, and $c_{\varrho}^j$, $j=0,\dots,4$, are constants which 
depend on the parameters $m$, $n$, and $r$. The  tautological maps $f_{\lambda}$ and $f_{\varrho}$ arising from $\Gamma$ are 
the timelike immersions of the torus $\mathbb{T}=\R^2/2\pi \Z^2$ 
given by
$$
 \begin{cases}
 f_{\lambda} : \mathbb{T}\ni (s,\theta)\mapsto [\cos(\theta){\rm C}_{\lambda}(s)+\sin(\theta)\Gamma'(s)]\in \EE,\\
 f_{\varrho} : \mathbb{T}\ni (s,\theta) \mapsto [\cos(\theta){\rm C}_{\varrho}(s)+\sin(\theta)\Gamma'(s)]\in \EE.
 \end{cases}
    $$
Since $f_{\lambda}(s,\theta)=-f_{\lambda}(s,\theta+\pi)$ and $f_{\varrho}(s,\theta)=-f_{\varrho}(s,\theta+\pi)$, 
the immersed surfaces 
$f_{\lambda}({\mathbb T})$ and $f_{\varrho}({\mathbb T})$ are symmetrical with respect to the wall of the AdS chambers 
${\mathcal A}^{\pm}_{{P}_4}$.  
The umbilical arcs  ${\mathcal C}_{\pm}=f_{\lambda}({\mathrm C}_{\pm})=f_{\varrho}({\mathrm C}_{\pm})$ are the intersections 
of the surfaces with the AdS wall. They are standard torus knots of type $(m,n)$  parametrized by
$$
   \R/2\pi\Z \ni   s \mapsto  \left[\pm(-\sin(ns),\cos(ns),-\sin(ms),\cos(m s),0)\right]\in {\mathcal C}_{\pm}.
      $$
If both $m$ and $n$ are odd, then ${\mathcal C}_{+}={\mathcal C}_{-}$. 
Consequently, the umbilical points are double points of the 
tautological immersions. In this case, $f_{\lambda}$ and $f_{\varrho}$ cannot be embeddings (see Figures \ref{FIG8} and \ref{FIG9}). 
If one among $m$ and $n$ is even, the umbilical arcs are disjoint. The parabolic components $f_{\lambda}(\mathbb T^{+})$ and $f_{\varrho}(\mathbb T^{-})$ belong to the positive AdS chamber, while $f_{\lambda}(\mathbb T^{-})$ and $f_{\varrho}(\mathbb T^{+})$ belong to the negative one. 
Due to the symmetry of the surfaces with respect to the wall, $f_{\lambda}(\mathbb T^{-})$ and $f_{\varrho}(\mathbb T^{+})$ are 
mirror images of  $f_{\lambda}(\mathbb T^{+})$ and $f_{\varrho}(\mathbb T^{-})$ (see Figures \ref{FIG10} and \ref{FIG11}). 
Although not homogeneous, these immersions are of cohomogeneity
one. Since the directrix curves have constant curvatures, all these immersions are second order local deformations of each other.

\bibliographystyle{amsalpha}

\end{document}